\newcommand{\bra}[1]{\left(#1\right)}
\renewcommand{\L}{\mathscr{L}}
\renewcommand{\H}{\mathscr{H}}
\numberwithin{equation}{section}
\numberwithin{figure}{section}
\newtheorem{corollary}{Corollary}[section]
\newtheorem{theorem}{Theorem}[section]
\newtheorem{lemma}{Lemma}[section]
\newtheorem{remark}{Remark}[section]
\begin{document}
\title[]{Global Well-Posedness and Asymptotic Behavior for a Reaction-Diffusion System of Competition Type}
\author{Jeffrey Morgan, Samia Zermani}

\address{Jeffrey Morgan \\Department of Mathematics,
University of Houston, Houston, TX 77204-3008, USA}
\email{jjmorgan@uh.edu}

\address{Samia Zermani \\Tunis El Manar University,
Institut Pr\'eparatoire aux Etudes d'ing\'enieurs El Manar,   Campus
Universitaire Farhat Hached, 2092 Tunis, Tunisia}
\email{samia.zermani@ipeiem.utm.tn}
\subjclass[2010]{35A01,35K57,35K58,35Q92,92D25}
\keywords{flocculation, global solution, uniform boundedness, spectral theory, steady-state.}
\begin{abstract}
We analyze a reaction-diffusion system describing the growth of microbial species in a model of flocculation type that arises in biology. Existence of global classical positive solutions is proved under general growth assumptions, with flocculation and deflocculation rates polynomially bounded above, that guarantee uniform sup norm
bounds for all time t  obtained by an $L^{p}-$energy functional estimate. We also show finite time blow up can occur when the yield coefficients are large enough. Also, using arguments relying on the spectral and fixed theory, we show persistence and existence of nonhomogenous population steady-states. Finally, we present some numerical simulations to show the combined effects of motility coefficients and the flocculation-deflocculation rates on the coexistence of species.
\end{abstract}

\maketitle
\section{Introduction}
This paper studies a reaction-diffusion system modeling a flocculation process in an un-stirred chemostat, where the isolated or planktonic bacteria naturally aggregate, reversibly, to one another to form macroscopic flocs. Our work is an extension of the several species flocculation models in \cite{FH} and \cite{FH2}, and the single species model considered in \cite{ZA}.

We start by reviewing the model in \cite{ZA} and relating the results to our current work. The model considered in that work is given by
\begin{equation}\label{Sa1}
\left\{
\begin{array}{cc}
S_{t}=d_{0}S_{xx}-S_{x}-f(S)u-g(S)v,&\text{ on }(0,1)\times(0,T),\\
{u}_{t}=d_1{u}_{xx}-{u}_{x}+f(S)u-\dfrac{1}{y_{u}}\alpha(u,v)u+\beta(u,v)v,&\text{ on }(0,1)\times(0,T),\\
{v_i}_{t}=d_2{v}_{xx}-{v}_{x}+g(S)v+\alpha(u,v)u-\dfrac{1}{y_{v}}\beta(u,v)v,&\text{ on }(0,1)\times(0,T),\\
\end{array}%
\right.
\end{equation}%
with boundary conditions
\begin{equation}\label{Sa2}
\left\{
\begin{array}{cc}
-d_{0}S_{x}(0,t)+S(0,t)=1,\,\,S_{x}(1,t)=0,&0<t<T,\\
-d_{1}{u}_{x}(0,t)+u(0,t)=0,\,\,{u_i}_{x}(1,t)=0,&0<t<T,\\
-d_{2}{v}_{x}(0,t)+v(0,t)=0,\,\,{v_i}_{x}(1,t)=0,&0<t<T,\\
\end{array}%
\right.
\end{equation}%
and initial conditions
\begin{equation}\label{Sa3}
\left\{\begin{array}{cc}
S(x,0)={S_0}(x),\,\, u(x,0)={u_0}(x)=,\,\,v(x,0)={v_0}(x)=,&x\in[0,1]\\
i=1,...,m&
\end{array}\right.
\end{equation}%
Here, $d_0,d_1,d_2,y_u,y_v>0$, $f,g\in C^1(\mathbb{R}_+,\mathbb{R}_+)$ satisfy $f(0)=g(0)=0$, $\alpha,\beta\in C^1(\mathbb{R}_+^2,\mathbb{R}_+)$, and $S_0,u_0,v_0\in C([0,1],\mathbb{R}_+)$, where $\mathbb{R}_+=[0,\infty)$. It is well known that these conditions guarantee the existence of a unique classical componentwise nonnegative solution $(S,u,v)$ to (\ref{Sa1})-(\ref{Sa3}) on a maximal time interval $(0,T_{\text{max}})$. Furthermore, if $T_{\text{max}}<\infty$ then
$$\limsup_{t\to T_{\text{max}}^-}\left(\|S(\cdot,t)\|_{\infty,(0,1)}+\|u(\cdot,t)\|_{\infty,(0,1)}+\|v(\cdot,t)\|_{\infty,(0,1)}\right)=\infty$$
The authors of \cite{ZA} established a few different global existence results. The first of these assumes the conditions above, along with
\begin{enumerate}
\item[$\mathbf{(ZA_{1})}$] $y_{u}y_{v}<1$,
\item[$\mathbf{(ZA_2)}$] $\alpha(u,v),\beta(u,v)$ are nondecreasing in $u,v\ge0$,
\item[$\mathbf{(ZA_3)}$] there exists a constant $k>0$ so that
$$\lim_{z\to\infty}\left(\frac{1}{y_u}\alpha(z,0)-k\beta(z,kz)\right)=\infty,$$
$$\lim_{z\to\infty}\left(\frac{1}{y_v}\beta(0,kz)-k\alpha(z,kz)\right)=\infty.$$
\end{enumerate}
The condition in $\mathbf{(ZA_3)}$ places considerable further restrictions on the values $y_u$ and $y_v$ even when $\alpha(u,v)$ and $\beta(u,v)$ are linear in $u$ and $v$. For example, if $a,b,c,d\ge0$ (recall that $\alpha(u,v),\beta(u,v)\ge0$ for $u,v\ge0$) and
$$\alpha(u,v)=au+bv,\,\,\beta(u,v)=cu+dv,$$
then $\mathbf{(ZA_3)}$ implies there exists $k>0$ so that
$$\frac{1}{y_u}a-k(c+kd)>0$$
and
 $$\frac{1}{y_v}d-(a+kb)>0.$$
 The first inequality implies
 $$a>y_uk(c+kd),$$
 and the second inequality implies
 $$a<\frac{d}{y_v}-kb.$$
 By contrast, our results imply that in the case when $\alpha(u,v)$ and $\beta(u,v)$ are nonnegative and linear in $u\ge0$ and $v\ge0$, the condition $y_uy_v\le1$ is sufficient to obtain global existence for (\ref{Sa1})-(\ref{Sa3}). More generally, we can obtain global existence when $\alpha(u,v)$ and $\beta(u,v)$ are somewhat nonlinear, provided $y_uy_v\le 1$, $\alpha(u,v),\beta(u,v)\le C(u+v+1)^l$ for some $l>0$ (with no restriction on the size of $l$), and there exists $K>0$ and $0<\varepsilon<<1$ so that either
$$\alpha(u,v)\le K(u+v+1)^{2-\varepsilon}$$
or
$$\beta(u,v)\le K(u+v+1)^{2-\varepsilon}.$$

 The second global existenced result for (\ref{Sa1})-(\ref{Sa3}) in \cite{ZA} assumes $\alpha$ and $\beta$ are uniformly bounded, along with a condition that relates the growth of $f(S)$ and $g(S)$ as $S\to\infty$ to $y_u$ and $y_v$. In this case, regardless of the choices of $y_u,y_v>0$, they prove global existence. However, in this case, there really is no need for bounds on the growth of $f(S)$ and $g(S)$ since $S$ can be a priori sup norm bounded, and then the reaction terms in the equations for $u$ and $v$ are bounded above by linear expressions in $u$ and $v$, making it a simple matter to obtain global existence.

 The final global existence result for (\ref{Sa1})-(\ref{Sa3}) in \cite{ZA} only assumes $0< y_u,y_v\le1$ and $d_0=d_1=d_2>0$. However, in this case, if we define
 $$w=S+u+v,$$
then
 \begin{equation*}\left\{
 \begin{array}{cc}
 w_t\le d_0w_{xx}-w_x,&\text{on }(0,1)\times(0,T),\\
 -d_0w_x(0,t)+w(0,t)=1,&\text{on }0<t<T,\\
 w(x,0)=u_0(x)+v_0(x)+w_0(x),&x\in(0,1).
\end{array}\right.
\end{equation*}
As a result, the comparison principle implies $w$ is sup norm bounded, and consequently, $S$, $u$ and $v$ are sup norm bounded. This immediately gives global existence.

\medskip Our multi component generalization of (\ref{Sa1})-(\ref{Sa3}) is motivated by the several species models in \cite{FH}-\cite{FH2}. There, the authors studied a chemostat model where $$\alpha_i(u,v)=a_i\sum_{i=1}^m(u_i+v_i)$$ and $\beta_i(\cdot)=b_i$, with $a_i,b_i>0$ for all $i=1,...,m$. Their model was only driven by reaction terms representing instantaneous interactions in which spatial effects were considered negligible due to concentrations being homogeneously distributed, and was given by
\begin{equation*}
\left\{
\begin{array}{l}
S_{t}=D(S_{in}-S)-\sum_{i=1}^m\left(f_i(S)u_i+g_i(S)v_i\right),\\
{u_i}_{t}=(f_i(S)-D_{0,i})u_i-\alpha_i(\cdot)u_i+\beta_i(\cdot)v_i,\quad i=1,...,m,\\
{v_i}_{t}=(g_i(S)-D_{1,i})v_i+\alpha_i(\cdot)u_i-\beta_i(\cdot)v_i,\quad i=1,...,m.\\
\end{array}%
\right.
\end{equation*}
This system, and the work in \cite{ZA} has motivated us to consider the system
\begin{equation}\label{S1}
\left\{
\begin{array}{cc}
S_{t}=d_{0}S_{xx}-S_{x}-\sum_{i=1}^m\left(f_i(S)u_i+g_i(S)v_i\right),&\text{ on }(0,1)\times(0,T),\\
{u_i}_{t}=d_{u,i}{u_i}_{xx}-{u_i}_{x}+f_i(S)u_i-\dfrac{1}{y_{u,i}}\alpha_i(u,v)u_i+\beta_i(u,v)v_i,&\text{ on }(0,1)\times(0,T),\\i=1,...,m,&\\
{v_i}_{t}=d_{v,i}{v_i}_{xx}-{v_i}_{x}+g_i(S)v_i+\alpha_i(u,v)u_i-\dfrac{1}{y_{v,i}}\beta_i(u,v)v_i,&\text{ on }(0,1)\times(0,T),\\i=1,...,m,&\\
\end{array}%
\right.
\end{equation}%
with boundary conditions:
\begin{equation}\label{S2}
\left\{
\begin{array}{cc}
-d_{0}S_{x}(0,t)+S(0,t)=\gamma_{S},\,\,S_{x}(1,t)=0,&t>0\\
-d_{u,i}{u_i}_{x}(0,t)+u_i(0,t)=\gamma_{u,i},\,\,{u_i}_{x}(1,t)=0,&t>0,i=1,...,m,\\
-d_{v,i}{v_i}_{x}(0,t)+v_i(0,t)=\gamma_{v,i},\,\,{v_i}_{x}(1,t)=0,&t>0,i=1,...,m,\\
\end{array}%
\right.
\end{equation}%
and initial conditions:
\begin{equation}\label{S3}
\left\{\begin{array}{cc}
S(x,0)={S_0}(x),\,\, u(x,0)={u_0}(x)=\left({u_0}_i(x)\right),\,\,v(x,0)={v_0}(x)=\left({v_0}_i(x)\right),&x\in[0,1]\\
i=1,...,m&
\end{array}\right.
\end{equation}%
Here $S(t)$ is the substrate concentration, $u(\cdot,t)=\left(u_i(\cdot,t)\right)$ and $v(\cdot,t)=\left(v_i(\cdot,t)\right)$ denote respectively the concentrations of isolated and attached bacteria at time $t$, $f=\left(f_i\right)$ and $g=\left(g_i\right)$ represent, respectively, the per-capita growth rates of the isolated and attached bacteria, $\alpha=\left(\alpha_i\right)$ and $\beta=\left(\beta_i\right)$ denote, respectively, the componentwise nonnegative flocculation and deflocculation rates. The coefficients $y _{u,i}$ and $y _{v,i}$ are positive constants that respectively consider the characteristics of the medium, the efficiency of collision and  the yield coefficient for free and attached bacteria. The isolated and attached  microbial cells are assumed to be capable of random movement, modeled by diffusion with diffusivity constants $d_0,d_{u,i},d_{v,i}>0$, and the constants $\gamma_{S},\gamma_{u,i},\gamma_{v,i}\ge0$ represent feed terms at one end of the interval. Finally, each of $S_0$, the components ${u_0}_i$ of $u_0$, and ${v_0}_i$ of $v_0$, represent bounded nonnegative initial data.

Our results guarantee the existence of global componentwise nonnegative \textbf{weak} solutions of (\ref{S1})-(\ref{S3}) when ${y_u}_i{y_v}_i\le 1$ for $i=1,...,m$, and global well-posedness of componentwise nonnegative classical solutions to (\ref{S1})-(\ref{S3}) when ${y_u}_i{y_v}_i\le 1$ for $i=1,...,m$, and reasonable growth is assumed on the components of $\alpha(u,v)$ and $\beta(u,v)$. We also show that finite time blow up can occur under these same reasonable growth assumptions when ${y_u}_i{y_v}_i> 1$ for some $i\in\{1,...,m\}$. Finally, we examine the existence of steady states in the case of (\ref{Sa1})-(\ref{Sa2}) under various conditions.

Section 2 introduces basic notation, and states and proves our result related to global weak solutions. Section 3 states our results for global existence of global classical solutions, and blow-up. Section 4 contains the proofs of these results, and Section 5 gives steady state results. In our study, we extend the model in \cite{ZA} by dropping the assumption that flocculation and deflocculation rates are bounded functions. Indeed, when the washout steady-state $(S,u,v)=(1,0,0)$ is unstable, the authors, in \cite{ZA}, show the existence of a non trivial steady-state solution under the assumption that the flocculation-deflocculation rates $\alpha(\cdot),\,\beta(\cdot)$ are bounded functions with $y_{u},y_{v}$ satisfying that $\displaystyle \frac{y_{u}f(1)+1}{1+y_{u}\lambda_{d_{1}}}=\frac{y_{v}g(1)+1}{1+y_{v}\lambda_{d_{2}}}$, noting that the study of the case of the extinction of one of the species in \cite{ZA} is shown only by numerical simulations.\\ In this work, for a different range of parameters than in \cite{ZA}, we prove coexistence of species as well as the extinction of one of the species in the medium. Finally, in Section 6, we provide some numerical simulations
for different values of the diffusion coefficients to show that the competition
between species does not depend only on the growth rates but also on the diffusion coefficients.
\medskip

\section{Local Well-Posedness and Global Weak Solutions}
Throughout this work, we denote $\mathbb{R}_+=[0,\infty)$, and we make the following assumptions concerning the terms in (\ref{S1})-(\ref{S3}).
 \medskip
 \begin{enumerate}
\item[\textbf{(A1)}] $y_{u,i},y_{v,i},d_0,d_{u,i},d_{v,i}>0$ and $\gamma_{S},\gamma_{u,i},\gamma_{v,i}\ge0$ for $i=1,...,m$.
\item[\textbf{(A2)}] $f,g:\mathbb{R}_+\to\mathbb{R}_+^m$ and $\alpha,\beta:\mathbb{R}_+^m\times\mathbb{R}_+^m\to\mathbb{R}_+^m$ are locally Lipschitz, and $f(0)=g(0)=0$ for all $u,v\in\mathbb{R}_+^m$.
\item[\textbf{(A3)}] $S_0,{u_0}_i,{v_0}_i\in C([0,1],\mathbb{R}_+)$ for $i=1,...,m$.
\end{enumerate}
\medskip
\noindent We show below that if (A1)-(A3) and $y_{u,i}y_{v,i}<1$ for all $i=1,...,m$, then (\ref{S1})-(\ref{S3}) has a global componentwise nonnegative weak solution.
\medskip
Our discussion of the existence of global componentwise nonnegative weak solutions to (\ref{S1})-(\ref{S2}) takes advantage of the work in \cite{Pie1,Pie2} regarding the existence of global componentwise nonnegative weak solutions to reaction-diffusion systems whose reaction vector fields satisfy assumptions associated with quasi-positivity and dissipation of mass. In order to make use of their results, we create a general reaction-diffusion system that contains (\ref{S1})-(\ref{S3}) as a special case. To this end, suppose $\tilde{m}\in\mathbb{N}$ and consider the system given by
\begin{equation}\label{generalrd}\left\{
\begin{array}{cc}
{w_i}_t=D_i {w_i}_{xx}-{w_i}_x+F_i(x,t,w),&x\in(0,1),0<t<T,i=1,...,\tilde{m},\\
-D_i{w_x}_i(0,t)+w_i(0,t)=\delta_i,\,{w_i}_x(1,t)=0&0<t<T,i=1,...,\tilde{m},\\
w_i(x,0)={w_0}_i(x),&x\in(0,1),i=1,...,\tilde{m}.
\end{array}\right.
\end{equation}
Here, we assume
\medskip
\begin{enumerate}
\item[\textbf{(A4)}] for each $i=1,...,\tilde{m}$, $D_i>0$, $\delta_i\ge0$, $F_i:(0,1)\times\mathbb{R}_+\times\mathbb{R}_+^{\tilde{m}}\to\mathbb{R}_+^{\tilde{m}}$ is bounded on $(0,1)\times(0,T)\times(0,M)$ for each $T,M>0$, $F_i(x,t,w)$ is locally Lipschitz in $w$, uniformly with respect to $x$ and bounded $t$, and ${w_0}_i\in C([0,1],\mathbb{R}_+)$.
\end{enumerate}
\medskip
\noindent It should be clear that (\ref{S1})-(\ref{S2}) is a special case of (\ref{generalrd}).

We say that $F=(F_i)$ satisfies a quasi-positivity condition if and only if
\medskip
\begin{enumerate}
\item[\textbf{(QP)}] for each $i=1,...,\tilde{m}$, $F_i(x,t,w)\ge0$ for all $0<x<1$, $t>0$ and $w\in\mathbb{R}_+^{\tilde{m}}$ with $w_i=0$,
\end{enumerate}
\medskip
\noindent and $F$ satisfies a dissipation of mass condition if and only if there are scalars $a_i>0$ for $i=1,...,\tilde{m}$, $K_1\in\mathbb{R}$ and $K_2\ge0$ so that
\medskip
\begin{enumerate}
\item[\textbf{(DISS)}] $\sum_{i=1}^{\tilde{m}}a_iF_i(x,t,w)\le K_1\sum_{i=1}^{\tilde{m}}w_i+K_2.$
\end{enumerate}
\medskip
\noindent It is well known that (A4) and (QP) guarantee the existence of a unique componentwise nonnegative classical solution $w$ to (\ref{generalrd}) on a maximal time interval $(0,T_{\text{max}})$, and furthermore,
$$\limsup_{t\to T_{\text{max}}}\sum_{i=1}^{\tilde{m}}\|w_i(\cdot,t)\|_{\infty,(0,1)}=\infty.$$
The local well-posedness is guaranteed by a wealth of results, including \cite{A1,A2}, and the componentwise nonnegativity is guaranteed by results in \cite{Kui} and many others. We state this result below.

\begin{theorem}\label{local}
If (A4) and (QP) are satisfied, then there exists $T_\text{max}\in(0,\infty]$ such that (\ref{generalrd}) has a unique componentwise nonnegative classical solution on the maximal interval $(0,T_\text{max})$. Furthermore, if $T_\text{max}<\infty$ then
$$
\limsup_{t\to T_{\text{max}}^-}\sum_{i=1}^m\|w_i(\cdot,t)\|_{\infty,(0,1)}=\infty.
$$
\end{theorem}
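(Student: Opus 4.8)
\emph{Plan.} The statement is essentially classical, so the plan is to assemble it from the abstract semigroup theory of \cite{A1,A2} together with the invariant-region arguments of \cite{Kui}, after verifying that the hypotheses of those results hold under (A4) and (QP).

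First I would set up the abstract framework. For each $i$, let $A_i$ denote the realization of $-D_i\pa_{xx}+\pa_x$ on $(0,1)$ subject to the mixed conditions $-D_i{w_i}_x(0,t)+w_i(0,t)=\delta_i$ and ${w_i}_x(1,t)=0$. After subtracting a fixed smooth function absorbing the inhomogeneous boundary data (so that the shifted unknown satisfies homogeneous conditions), $A_i$ is a uniformly elliptic second-order operator whose first-order advection term is a lower-order, relatively bounded perturbation of the sectorial Laplacian; hence $A_i$ is sectorial on $\LO{p}$ for $p>1$ and generates an analytic semigroup. Writing the system in mild form $w(t)=e^{-tA}w_0+\int_0^t e^{-(t-s)A}F(\cdot,s,w(s))\,ds$ on the product space $X=\bigl(\LO{p}\bigr)^{\tilde m}$, with $p$ large enough that $X^{\theta}\hookrightarrow\bigl(C([0,1])\bigr)^{\tilde m}$ for some $\theta<1$, the local Lipschitz hypothesis in (A4) makes the nonlinear term locally Lipschitz from $X^{\theta}$ into $X$. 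A contraction-mapping argument in $C([0,T_0];X^{\theta})$ for small $T_0$ then produces a unique local mild solution, and parabolic Schauder and $L^p$ estimates upgrade it to a classical solution; this is exactly the content of \cite{A1,A2}.

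Next, for the maximal interval and the blow-up alternative, I would invoke the standard continuation principle. Let $T_{\text{max}}$ be the supremum of existence times of the unique solution. If $T_{\text{max}}<\infty$ while $\sum_{i=1}^{\tilde m}\|w_i(\cdot,t)\|_{\infty,(0,1)}$ remained bounded on $[0,T_{\text{max}})$, then because $F$ is bounded on $(0,1)\times(0,T)\times(0,M)$ for each $M$ by (A4), the mild formulation yields uniform $X^{\theta}$-bounds on $w(t)$ as $t\uparrow T_{\text{max}}$. Hence $w(t)$ converges in $X^{\theta}$, and the local theorem can be restarted from $T_{\text{max}}$, contradicting maximality. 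Therefore $T_{\text{max}}<\infty$ forces the stated blow-up of the sup norm.

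The step I expect to require the most care is the componentwise nonnegativity, since (QP) is only the non-strict tangency condition $F_i\ge0$ when $w_i=0$. On any compact subinterval of existence the solution is continuous and bounded, so by local Lipschitzness of $F_i$ in $w_i$ I would write $F_i(x,t,w)=b_i(x,t)\,w_i+F_i(x,t,\wh{w}^{\,i})$, where $\wh{w}^{\,i}$ is $w$ with its $i$-th component set to zero and $b_i$ is bounded; the term $F_i(x,t,\wh{w}^{\,i})$ is $\ge0$ by (QP) once the remaining components are nonnegative. The simultaneity of these sign conditions across all $i$ is precisely what the invariant-region theorem of \cite{Kui} handles: the positive orthant $\R_+^{\tilde m}$ is invariant because $F$ points inward on each face, and the boundary data $\delta_i\ge0$ and initial data ${w_0}_i\ge0$ lie in it. Equivalently, a perturbation argument replacing $F_i$ by $F_i+\eta$ gives strictly inward-pointing vector fields, strictly positive approximants $w^{\eta}$ via the boundary-point lemma at a first zero, and then $\eta\to0$ with continuous dependence yields $w_i\ge0$. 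This closes the proof.
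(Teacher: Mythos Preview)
Your proposal is correct and aligns with the paper's own treatment: the paper does not give a detailed proof of this theorem at all, but simply states it as well known, citing \cite{A1,A2} for local well-posedness and the blow-up alternative and \cite{Kui} for the invariant-region argument yielding componentwise nonnegativity---precisely the ingredients you assemble. Your sketch is in fact more explicit than the paper, which merely points to these references.
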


\medskip\noindent The solution guaranteed by Theorem \ref{local} is said to be a \textbf{global classical solution} provided $T_\text{max}=\infty$.

The system (\ref{generalrd}) is closely related to the system
\begin{equation}\label{generalrdtr}
\left\{
\begin{array}{cc}
{w_i}_t=D_i {w_i}_{xx}-{w_i}_x+\phi(w)F_i(x,t,w),&x\in(0,1),0<t<T,i=1,...,\tilde{m},\\
-D_i{w_x}_i(0,t)+w_i(0,t)=\delta_i,\,{w_i}_x(1,t)=0&0<t<T,i=1,...,\tilde{m},\\
w_i(x,0)={w_0}_i(x),&x\in(0,1),i=1,...,\tilde{m},
\end{array}
\right.
\end{equation}
where $\phi\in C_0^\infty\left(\mathbb{R}_+^{\tilde{m}},[0,1]\right)$. We remark that if (A4) and (QP) are satisfied, then Theorem \ref{local} immediately applies to (\ref{generalrdtr}), and since the functions $\phi(w)F_i(x,t,w)$ are Lipschitz in $w$, uniformly for $(x,t)\in (0,1)\times(0,T)$ for each $T>0$ and $i=1,...,\tilde{m}$, solutions to (\ref{generalrdtr}) cannot blow-up in finite time. Consequently, solutions to (\ref{generalrdtr}) are global. We state this result along with another critical piece below.

\begin{corollary}\label{Cor1}
If (A4) and (QP) are satisfied, and $\phi\in C_0^\infty\left(\mathbb{R}_+^{\tilde{m}},[0,1]\right)$, then (\ref{generalrdtr}) has a unique componentwise nonnegative classical global solution. Furthermore, if there exists $M\in C(\mathbb{R}_+,\mathbb{R}_+)$ independent of $\phi$ so that the solution $w$ of (\ref{generalrdtr}) satisfies $\|w_i(\cdot,t)\|_{\infty,(0,1)}\le M(t)$ for all $i=1,...,m$ and $t\ge 0$, then (\ref{generalrd}) has a unique componentwise nonnegative classical global solution.
\end{corollary}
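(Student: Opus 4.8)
The first assertion is essentially the discussion preceding the statement, so I would only make it precise. Since $\phi\ge0$ and $F$ satisfies (QP), the truncated field $\phi(w)F_i(x,t,w)$ is again nonnegative whenever $w\in\mathbb{R}_+^{\tilde{m}}$ has $w_i=0$, so (QP) holds for (\ref{generalrdtr}); and because $\phi\le1$ while each $F_i$ is bounded and locally Lipschitz in $w$ (uniformly in $x$ and bounded $t$), the products $\phi(w)F_i(x,t,w)$ inherit (A4). Thus Theorem \ref{local} applies and produces a unique componentwise nonnegative maximal classical solution. The only additional point is globality: as $\phi\in C_0^\infty(\mathbb{R}_+^{\tilde{m}},[0,1])$ has compact support, $\phi(w)F_i$ vanishes off a compact set and is bounded and globally Lipschitz in $w$, uniformly for $(x,t)\in(0,1)\times(0,T)$; a uniformly bounded reaction term cannot produce finite-time sup-norm blow-up, so the blow-up alternative forces $T_{\max}=\infty$.

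The substance is the second assertion, and the plan is to use the $\phi$-independent bound $M$ to select a single cutoff for which the truncated and untruncated problems share the same solution on a time interval exceeding $T_{\max}$. Let $w$ be the maximal classical solution of (\ref{generalrd}) on $(0,T_{\max})$ furnished by Theorem \ref{local}, and argue by contradiction, assuming $T_{\max}<\infty$. Fix $T>T_{\max}$ and set $M_T:=\max_{t\in[0,T]}M(t)$, finite by continuity of $M$. I would then choose $\phi\in C_0^\infty(\mathbb{R}_+^{\tilde{m}},[0,1])$ with $\phi\equiv1$ on the box $[0,M_T+1]^{\tilde{m}}$ and $\mathrm{supp}\,\phi\subset[0,M_T+2]^{\tilde{m}}$; such a $\phi$ is a standard mollification of the indicator of a slightly enlarged box, and it is admissible because its support is a compact subset of the closed orthant.

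Let $w^{\phi}$ be the global truncated solution attached to this $\phi$. By hypothesis $\|w_i^{\phi}(\cdot,t)\|_{\infty,(0,1)}\le M(t)\le M_T$ for every $i$ and every $t\in[0,T]$, so $w^{\phi}(x,t)$ stays in $[0,M_T+1]^{\tilde{m}}$, where $\phi\equiv1$; hence $\phi(w^{\phi})\equiv1$ and $w^{\phi}$ actually solves the untruncated system (\ref{generalrd}) on $(0,T)$. By the uniqueness in Theorem \ref{local}, $w=w^{\phi}$ on $(0,T_{\max})$, so $\sum_{i=1}^{\tilde{m}}\|w_i(\cdot,t)\|_{\infty,(0,1)}\le\tilde{m}\,M_T$ for all $t\in(0,T_{\max})$. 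This bounded trajectory contradicts the blow-up alternative $\limsup_{t\to T_{\max}^-}\sum_i\|w_i(\cdot,t)\|_{\infty,(0,1)}=\infty$; equivalently, $w^{\phi}$ extends $w$ past $T_{\max}$, contradicting maximality. Therefore $T_{\max}=\infty$, and uniqueness and nonnegativity are inherited from Theorem \ref{local}.

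The step I expect to be the main obstacle is precisely the identification $w=w^{\phi}$: it hinges on the bound $M$ being genuinely independent of the cutoff, which is what lets one fixed $\phi$ dominate the whole trajectory on $[0,T]$ at once so that truncation becomes invisible, and on uniqueness to transfer the resulting a priori bound to the maximal solution of the original problem. Everything else is routine cutoff construction and blow-up-alternative bookkeeping.
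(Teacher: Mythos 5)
Your proposal is correct and follows essentially the same route the paper takes: the first assertion is exactly the paper's remark preceding the corollary (the compactly supported cutoff makes $\phi(w)F_i$ bounded and globally Lipschitz in $w$, so the blow-up alternative of Theorem \ref{local} cannot trigger), and the second assertion, which the paper states without an explicit proof, is precisely the standard argument you give — pick one cutoff equal to $1$ on a box dominating $\max_{[0,T]}M$, note the truncated solution then solves the untruncated system, and invoke uniqueness to contradict the blow-up criterion. Your identification of the $\phi$-independence of $M$ as the crux is exactly right; nothing is missing.
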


Note that if (A4), (QP) and (DISS) are satisfied, and we integrate the first equation in (\ref{generalrdtr}) over $(0,1)$ and apply (DISS), then
$$\frac{d}{dt}\int_0^1\sum_{i=1}^{\tilde{m}}a_iw_i(x,t)dx\le \sum_{i=1}^{\tilde{m}}a_i\delta_i+\int_0^1\left( K_1\sum_{i=1}^{\tilde{m}}w_i(x,t)+K_2\right)dx,$$
for all $t>0$, independent of $\phi$. This implies that for each $i=1,...,\tilde{m}$, $\|w_i(\cdot,t)\|_{1,(0,1)}$ grows at most exponentially as $t\to\infty$ if $K_1>0$, linearly as $t\to\infty$ if $K_1=0$, and is uniformly bounded for all $t\ge0$ if either $K_1<0$ or $K_1,K_2\le0$. As a result, solutions to (\ref{generalrd}) satisfy an $L^1(0,1)$ a priori bound. It is also possible to modify results in \cite{morgan1989global,Mor90} to show these solutions satisfy an $L^2((0,1)\times(0,T))$ a priori bound for all $T>0$. Unfortunately, it is well known that this is not sufficient to guarantee global existence of solutions to (\ref{generalrd}). In this regard, we encourage the reader to explore \cite{Pie3} for a discussion of blow-up when (A4), (QP) and (DISS) are satisfied.

\medskip Our interest in the remainder of this section is global existence of componentwise nonnegative weak solutions to (\ref{S1})-(\ref{S2}). In this regard, we refer the reader to the work in \cite{Pie1,Pie2}. Although their presentation is made (in arbitrary dimensions) for reaction-diffusion systems satisfying (QP) and (DISS), without the inclusion of first order spatial derivative terms, and with homogeneous boundary conditions, it is clear that the inclusion of the first order derivative terms in (\ref{generalrd}) and our choice of nonhomogeneous boundary conditions have no effect on the analysis in their work.  Following their approach, we define $Q_T=(0,1)\times(0,T)$,
$$\mathbb{D}_T=\left\{\psi\in C^\infty\left(\overline{Q}_T\right);\, \psi(\cdot,T)=0\right\},$$
and we say that $w=(w_i)$ is a componentwise nonnegative global weak solution to (\ref{generalrd}) provided
\medskip
\begin{equation}\label{weak}
\left\{\begin{array}{c}
\text{for each }i=1,...,\tilde{m}\text{ and }T>0,\, F_i(\cdot,\cdot,w)\in L^1(Q_T),\\ w_i, {w_i}_x\in L^1(Q_T),\,w_i\ge0\text{ a.e., and for all }\psi\in \mathbb{D}_T,\\
-\int_0^1\psi(x,0) {w_0}_i(x)dx+\int_0^T\int_0^1\left(-\psi_tw_i+D_i\psi_x{w_i}_x\right)dxdt+\int_0^T\psi(1,t)w_i(1,t)dt\\=\int_0^T\int_0^1\psi F_i(x,t,w)dxdt+\delta_i\int_0^T\psi(0,t)dt.
\end{array}\right.
\end{equation}

\medskip
\noindent From \cite{Pie1,Pie2}, it is possible to prove the existence of a global weak solution to  (\ref{generalrd}) provided (A4) and (QP) are true, and for every $T>0$ there exists $C_T>0$ independent of $\phi\in C_0^\infty\left(\mathbb{R}_+^{\tilde{m}},[0,1]\right)$, so that if $w$ is the unique componentwise nonnegative classical global solution to (\ref{generalrdtr}) then
\begin{equation}\label{L1estimate}
\int_0^T\int_0^1\left|\phi(w)F_i(x,t,w)\right|dxdt\le C_T\text{ for all }i=1,...,\tilde{m}.
\end{equation}
This result is accomplished by creating a sequence of cut-off functions $\phi_k\in C_0^\infty\left(\mathbb{R}_+^{\tilde{m}},[0,1]\right)$ such that $\phi_k(w)=1$ when $\|w\|\le k$, and then showing that the solutions $w^k$ to (\ref{generalrdtr}) with $\phi=\phi_k$ has a subsequence that converges to a weak solution of (\ref{generalrd}) as $k\to\infty$.

\medskip We state this result below, without proof.

\medskip
\begin{theorem}\label{thm1}
If (A4) and (QP) are satisfied, and for every $T>0$ there exists $C_T>0$ independent of $\phi\in C_0^\infty\left(\mathbb{R}_+^{\tilde{m}},[0,1]\right)$, so that if $w$ is the unique componentwise nonnegative classical global solution to (\ref{generalrdtr}) then (\ref{L1estimate}) is true,
then (\ref{generalrd}) has a componentwise nonnegative global weak solution.
\end{theorem}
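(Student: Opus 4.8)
The plan is to construct the weak solution as a limit of the classical global solutions of the truncated systems (\ref{generalrdtr}), following the compactness method of \cite{Pie1,Pie2}. First I would fix cut-off functions $\phi_k\in C_0^\infty(\mathbb{R}_+^{\tilde m},[0,1])$ with $\phi_k\equiv1$ on $\{\,w:\|w\|\le k\,\}$ and $\mathrm{supp}\,\phi_k\subset\{\,w:\|w\|\le k+1\,\}$, and let $w^k=(w_i^k)$ denote the unique componentwise nonnegative classical global solution of (\ref{generalrdtr}) with $\phi=\phi_k$ provided by Corollary \ref{Cor1}. Writing $h_i^k:=\phi_k(w^k)\,F_i(\cdot,\cdot,w^k)$, each $w^k$ satisfies the weak identity in (\ref{weak}) with $F_i$ replaced by $h_i^k$, so the task reduces to passing to the limit $k\to\infty$ in that identity along a subsequence and identifying the limit as a solution of (\ref{weak}) for (\ref{generalrd}).

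Second, I would collect the uniform-in-$k$ estimates needed for compactness. Hypothesis (\ref{L1estimate}) gives $\|h_i^k\|_{L^1(Q_T)}\le C_T$ for every $i$ and every $T>0$, independent of $k$. Viewing each $w_i^k$ as the solution of the linear problem $w_{i,t}=D_iw_{i,xx}-w_{i,x}+h_i^k$ with the $k$-independent boundary datum $\delta_i$ and initial datum ${w_0}_i$, a Duhamel/comparison estimate against the underlying parabolic semigroup already bounds $\{w_i^k\}_k$ in $L^1(Q_T)$ uniformly; the smoothing properties of this one-dimensional operator with $L^1$ forcing, combined with an Aubin--Lions--Simon type compactness argument, should then upgrade this to relative compactness of $\{w_i^k\}_k$ in $L^1(Q_T)$ and a bound on $\{w_{i,x}^k\}_k$ in $L^1(Q_T)$ (indeed with slightly better integrability, so that a further subsequence yields $w_{i,x}^k\rightharpoonup w_{i,x}$ weakly). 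A companion trace estimate controls the boundary values $w_i^k(1,\cdot)$ in $L^1(0,T)$, which the boundary term of (\ref{weak}) requires. Passing to a diagonal subsequence in $k$ and $T$, I may then assume, for each $i$ and $T$, that $w_i^k\to w_i$ strongly in $L^1(Q_T)$ and a.e., $w_{i,x}^k\rightharpoonup w_{i,x}$, and $w_i^k(1,\cdot)\to w_i(1,\cdot)$.

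Third, I would pass to the limit term by term in (\ref{weak}). The initial- and boundary-data terms are independent of $k$, the terms $-\psi_tw_i^k$ and $D_i\psi_xw_{i,x}^k$ pass to the limit by the strong and weak $L^1$ convergences against the smooth test function $\psi\in\mathbb{D}_T$, and the boundary term $\psi(1,\cdot)w_i^k(1,\cdot)$ passes by the trace convergence. For the reaction term, continuity of $F_i$ and the a.e. convergence $w^k\to w$ give $F_i(\cdot,\cdot,w^k)\to F_i(\cdot,\cdot,w)$ a.e.; moreover, since $\|w(x,t)\|<\infty$ for a.e. $(x,t)$, we have $\phi_k(w^k)\to1$ a.e., so $\psi\,h_i^k\to\psi\,F_i(\cdot,\cdot,w)$ a.e.

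The step I expect to be the main obstacle is upgrading this a.e. convergence of the reaction terms to convergence of their integrals, i.e. $\int_{Q_T}\psi\,h_i^k\,dxdt\to\int_{Q_T}\psi\,F_i(\cdot,\cdot,w)\,dxdt$, since the bare $L^1$ bound (\ref{L1estimate}) does not by itself preclude concentration, i.e. a defect measure in the weak-$\ast$ limit of the $h_i^k$. The clean way to close this is to prove that $\{h_i^k\}_k$ is equi-integrable in $L^1(Q_T)$ and then invoke the Vitali convergence theorem (equivalently Dunford--Pettis). I would attempt this by truncation at a level $L$: the contribution of $h_i^k$ on $\{\|w^k\|\le L\}$ is uniformly bounded by local boundedness of $F_i$ from (A4) and converges by dominated convergence, while the contribution on $\{\|w^k\|>L\}$ must be shown to be small uniformly in $k$ as $L\to\infty$. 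This last uniform smallness is exactly where the bare $L^1$ control is insufficient and is the heart of the matter: one must either refine (\ref{L1estimate}) to an equi-integrable (e.g. $L\log L$ or $L^p$ with $p>1$) bound through a duality estimate for the parabolic operator, or exploit the quasi-positive/mass-dissipative structure to show directly that the weak-$\ast$ defect measure vanishes. Granting equi-integrability, the limit $w$ satisfies (\ref{weak}), $F_i(\cdot,\cdot,w)\in L^1(Q_T)$ follows from Fatou's lemma, and componentwise nonnegativity is inherited from the a.e. limit, completing the construction.
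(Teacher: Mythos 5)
The paper does not actually prove Theorem \ref{thm1}: it states the result ``without proof,'' deferring entirely to \cite{Pie1,Pie2}, and its only indication of the argument is exactly the construction you propose --- cut-off functions $\phi_k$ equal to $1$ on $\{\|w\|\le k\}$, the global classical solutions $w^k$ of (\ref{generalrdtr}) supplied by Corollary \ref{Cor1}, and extraction of a subsequence converging to a weak solution of (\ref{generalrd}). So your route coincides with the paper's, and the parts you work out in detail (uniform $L^1(Q_T)$ bounds on $h_i^k=\phi_k(w^k)F_i(\cdot,\cdot,w^k)$ from (\ref{L1estimate}), $L^1$ bounds on $w_i^k$ from the equation, parabolic $L^1$-compactness and trace control, the observation that $\phi_k(w^k)\to1$ a.e., and the term-by-term passage in (\ref{weak})) are consistent with the machinery of the cited references.

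The step you leave open --- equi-integrability/no concentration for $\{h_i^k\}$ --- is indeed the heart of the matter, and your diagnosis that a bare $L^1$ bound cannot by itself preclude a defect measure is accurate. But this is precisely the content of the results of Pierre that the paper invokes, so relative to the paper's own treatment your proposal is not missing anything the paper supplies; moreover, your second suggested repair is the one that literature actually uses: one shows the weak-$\ast$ limit of $h_i^k$ is $F_i(\cdot,\cdot,w)\,dx\,dt+\nu_i$ with each defect $\nu_i\ge0$ (the limit is a supersolution, obtained by Fatou-type arguments that require uniform integrability only of the negative parts of the reaction terms), and then a mass-control inequality of type (DISS), applied to a weighted sum $\sum_i a_i h_i^k$, forces $\sum_i a_i\nu_i\le0$, hence $\nu_i=0$ for every $i$. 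Note, incidentally, that Theorem \ref{thm1} as stated assumes only (A4), (QP) and (\ref{L1estimate}) --- not (DISS) --- so this closing argument is not available verbatim under the theorem's literal hypotheses; that looseness sits in the paper's statement and its reliance on \cite{Pie1,Pie2} (whose theorems carry the extra structure, which the actual application in Theorem \ref{thm2} does satisfy), not in your proposal. In short: same approach as the paper, sound as far as it goes, with the crux correctly identified but, like the paper itself, delegated to the cited works rather than closed.
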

\medskip
Now we are ready to prove our first result related to the system (\ref{S1})-(\ref{S3}).
\medskip
\begin{theorem}\label{thm2}
Assume (A1)-(A3) and $y_{u,i}y_{v,i}<1$ for all $i=1,...,m$. Then (\ref{S1})-(\ref{S3}) has a componentwise nonnegative global weak solution.
\end{theorem}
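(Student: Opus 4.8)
The plan is to realize (\ref{S1})--(\ref{S3}) as an instance of the general system (\ref{generalrd}) with $\tilde m=2m+1$ unknowns $w=(S,u_1,\dots,u_m,v_1,\dots,v_m)$, and then to verify the hypotheses of Theorem \ref{thm1}. First I would relabel so that the diffusivities are $(d_0,d_{u,i},d_{v,i})$, the feed data are $(\gamma_S,\gamma_{u,i},\gamma_{v,i})$, and the reaction field $F=(F_S,F_{u_i},F_{v_i})$ is read off from the right-hand sides of (\ref{S1}). Assumption (A4) is then immediate from (A1)--(A3), since products and compositions of the locally Lipschitz maps $f_i,g_i,\alpha_i,\beta_i$ are locally Lipschitz and bounded on bounded sets. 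Quasi-positivity (QP) follows because $f_i(0)=g_i(0)=0$ forces $F_S=0$ when $S=0$, while $F_{u_i}=\beta_i(u,v)v_i\ge0$ when $u_i=0$ and $F_{v_i}=\alpha_i(u,v)u_i\ge0$ when $v_i=0$, using the componentwise nonnegativity of $\alpha,\beta$. Hence Theorem \ref{local} and Corollary \ref{Cor1} apply to the truncated system (\ref{generalrdtr}) for every $\phi\in C_0^\infty(\mathbb{R}_+^{\tilde m},[0,1])$.

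The heart of the argument is the uniform $L^1$ bound (\ref{L1estimate}). I would look for weights $(a_j)=(a_S,p_1,\dots,p_m,q_1,\dots,q_m)$, all positive, making the weighted reaction sum pointwise nonpositive with room to spare. A direct computation gives
\begin{align*}
a_S F_S + \sum_{i=1}^m\big(p_i F_{u_i}+q_i F_{v_i}\big)
&= \sum_{i=1}^m (p_i-a_S) f_i(S)u_i + \sum_{i=1}^m (q_i-a_S) g_i(S)v_i\\
&\quad +\sum_{i=1}^m \Big(q_i-\tfrac{p_i}{y_{u,i}}\Big)\alpha_i(u,v)u_i
+\sum_{i=1}^m \Big(p_i-\tfrac{q_i}{y_{v,i}}\Big)\beta_i(u,v)v_i .
\end{align*}
Taking $p_i=1$ and $q_i=\sqrt{y_{v,i}/y_{u,i}}$, the hypothesis $y_{u,i}y_{v,i}<1$ makes both flocculation coefficients strictly negative, and choosing $a_S>\max_i\max\{1,\sqrt{y_{v,i}/y_{u,i}}\}$ makes the two growth coefficients strictly negative as well. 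Consequently there is $c>0$ with
\[
\phi(w)\Big(a_S F_S + \sum_{i=1}^m (p_i F_{u_i}+q_i F_{v_i})\Big)
\le -c\,\phi(w)\sum_{i=1}^m\big(f_iu_i+g_iv_i+\alpha_iu_i+\beta_iv_i\big)\le0,
\]
since $\phi\ge0$. This step uses $y_{u,i}y_{v,i}<1$ in an essential way and, crucially, requires no a priori bound on $S$, because the unbounded growth terms $f_i(S)u_i$ and $g_i(S)v_i$ already carry negative weights.

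I would then integrate this weighted combination over $Q_T=(0,1)\times(0,T)$. Integrating the diffusion and drift terms in (\ref{generalrdtr}) and inserting the Robin condition $-D_j{w_j}_x(0,t)+w_j(0,t)=\delta_j$ at $x=0$ together with the Neumann condition at $x=1$ produces, for each component, the boundary contribution $\delta_j-w_j(1,t)\le\delta_j$. Discarding the nonnegative terms $\int_0^1\sum_j a_jw_j(\cdot,T)\,dx$ and $\int_0^T\sum_j a_jw_j(1,t)\,dt$ then yields
\[
c\int_0^T\!\!\int_0^1 \phi(w)\sum_{i=1}^m\big(f_iu_i+g_iv_i+\alpha_iu_i+\beta_iv_i\big)\,dx\,dt
\le \int_0^1\sum_{j=1}^{\tilde m}a_j{w_0}_j\,dx + T\sum_{j=1}^{\tilde m}a_j\delta_j =: C_T,
\]
a constant independent of $\phi$. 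Since each of $\phi f_iu_i,\ \phi g_iv_i,\ \phi\alpha_iu_i,\ \phi\beta_iv_i$ is nonnegative with uniformly bounded space-time integral, the triangle inequality applied to $F_S,F_{u_i},F_{v_i}$ gives $\int_0^T\int_0^1|\phi(w)F_j|\,dx\,dt\le C_T'$ for every component $j$, uniformly in $\phi$. This is exactly (\ref{L1estimate}), so Theorem \ref{thm1} furnishes the desired global weak solution.

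The step I expect to be the main obstacle is the simultaneous choice of weights: one must make the flocculation and deflocculation coefficients $q_i-p_i/y_{u,i}$ and $p_i-q_i/y_{v,i}$ both negative, which is possible precisely when $y_{u,i}y_{v,i}<1$, while at the same time keeping $p_i,q_i\le a_S$ to absorb the growth terms. Once the weights are fixed, the boundary bookkeeping for the nonhomogeneous Robin and advection conditions, and the passage from $L^1$ control of the ``good'' products to $L^1$ control of the full reaction field $F$, are routine.
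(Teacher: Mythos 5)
Your proposal is correct and follows essentially the same route as the paper: reduce (\ref{S1})--(\ref{S3}) to the general system (\ref{generalrd}), verify (A4) and (QP), and establish the uniform $L^1$ bound (\ref{L1estimate}) by integrating positively weighted linear combinations of the equations so that Theorem \ref{thm1} applies. The only difference is in bookkeeping: you fold everything into a single combination $a_S S+\sum_{i}\left(u_i+\sqrt{y_{v,i}/y_{u,i}}\,v_i\right)$ whose reaction coefficients are all simultaneously negative, whereas the paper argues sequentially --- first integrating the $S$-equation to control $\phi f_i(S)u_i$ and $\phi g_i(S)v_i$, then integrating $(y_{u,i}+1)u_i+(y_{v,i}+1)v_i$ to control $\phi\alpha_i u_i$ and $\phi\beta_i v_i$ --- both choices exploiting $y_{u,i}y_{v,i}<1$ in exactly the same way.
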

\medskip
\begin{proof}
We start by reordering the equations in (\ref{S1})-(\ref{S2}) and examine it in the form (\ref{generalrd}). To this end, we set $\tilde{m}=2m+1$ and define
$$w=(w_1,w_2,w_3,...,w_{2m},w_{2m+1})=(S,u_1,v_1,u_2,v_2,....,u_m,v_m),$$
$$(D_1,D_2,D_3,...,D_{2m},D_{2m+1})=(d_0,d_{u,1},d_{v,1},...,d_{u,m},d_{v,m}),$$
$F(x,t,w)=(F_i(x,t,w))$ given by

\begin{equation*}
\left(
\begin{array}{c}
F_1(x,t,w)=-\sum_{i=1}^m\left(f_i(S)+g_i(S)\right)\\
F_2(x,t,w)=f_1(S)u_1-\dfrac{1}{y_{u,1}}\alpha_1(u,v)u_i+\beta_1(u,v)v_1\\
F_3(x,t,w)=g_1(S)v_1+\alpha_1(u,v)u_1-\dfrac{1}{y_{v,1}}\beta_1(u,v)v_1\\
\vdots\\
F_{2m}(x,t,w)=f_m(S)u_m-\dfrac{1}{y_{u,m}}\alpha_m(u,v)u_m+\beta_m(u,v)v_m\\
F_{2m+1}(x,t,w)=g_m(S)v_m+\alpha_m(u,v)u_m-\dfrac{1}{y_{v,m}}\beta_m(u,v)v_m\\
\end{array}
\right),
\end{equation*}
\smallskip
\begin{equation*}
(\delta_1,\delta_2,\delta_3,...,\delta_{2m},\delta_{2m+1})=(\gamma_S,\gamma_{u,1},\gamma_{v_1},...,\gamma_{u,2m},\gamma_{v,2m}),
\end{equation*}
and
\begin{equation*}
w_0(x)=(S_0(x),{u_0}_1(x),{v_0}_1(x),...,{u_0}_{2m}(x),{v_0}_{2m}(x)).
\end{equation*}
Conditions (A1)-(A3) guarantee (A4) and (QP). Now, suppose $\phi\in C_0^\infty\left(\mathbb{R}_+^{\tilde{m}},[0,1]\right)$ and $w$ isi the componentwise nonnegative global classical solution to (\ref{generalrdtr}). The maximum principle implies $w_1=S$ is sup norm bounded independent of $\phi$. In addition, integrating the equation for $w_1=S$ on $(0,1)\times(0,T)$ results in
\begin{align}\label{figiL1}
\int_0^T\int_0^1\left(\left|\phi f_i(S)u_i\right|+\left|\phi g_i(S)v_i\right|\right)dxdt&=\int_0^T\int_0^1\left(\phi f_i(w_1)w_{2i}+\phi g_i(w_1)w_{2i+1}\right)dxdt\nonumber\\&\le \gamma_ST+\|S_0\|_{\infty,(0,1)}
\end{align}
for each $i=1,...,m$. In addition, by integrating $$(y_{u,i}+1)w_{2i}+(y_{v,i}+1)w_{2i+1}=(y_{u,i}+1)u_i+(y_{v,i}+1)v_i,$$ we get
\begin{align}\label{alphabetaL1}
\int_0^T\int_0^1\left(\phi \alpha_i(w)w_{2i}+\phi \beta_i(w)w_{2i+1}\right)dxdt
\le &\delta\left(\left(\gamma_{u_i}+\gamma_{v_i}+\gamma_S\right)T\right.\nonumber\\&\left.+\|{u_0}_i+{v_0}_i\|_{\infty,(0,1)}+\|S_0\|_{\infty,(0,1)}\right)
\end{align}
for each $i=1,...,m$, where $$\delta>2\max_{j=1,...,m}\left\{\frac{y_{v,j}(1+y_{u,j})}{1-y_{u,j}y_{v,j}},\frac{y_{u,j}(1+y_{v,j})}{1-y_{u,j}y_{v,j}}\right\}.$$
As a result, \ref{figiL1}) and (\ref{alphabetaL1}) imply (\ref{L1estimate}) is true. Therefore, Theorem \ref{thm1} implies (\ref{generalrd}) has a componentwise nonnegative global weak solution.
\end{proof}

\medskip We remark that if $T_{\text{max}}<\infty$ in Theorem \ref{local}, then the global weak solution referred to in Theorem \ref{thm2} is not guaranteed to be unique for $t\ge T_{\text{max}}$. We refer the interested reader to \cite{Pie1,Pie2} for additional comments on this subject.

\medskip

\section{Global Classical Solutions for (\ref{S1})-(\ref{S3})}
It does not appear possible to prove that the hypotheses in Theorem \ref{thm2} imply (\ref{S1})-(\ref{S3}) has a unique componentwise nonnegative global classical solution. However, this can be achieved if we impose the following reasonable growth conditions, even if we only assume $y_{u,i}y_{v,i}\le1$ for all $i=1,...,m$.
\begin{enumerate}
\item[\textbf{(A5)}] There exist $l\ge 1$ and $h\in C(\mathbb{R}_+,\mathbb{R}_+)$ such that
\begin{equation*}
\begin{array}{l}
\alpha_i(u,v),\beta_i(u,v)\le h(S)\left(\sum_{i=1}^m (u_i+v_i)^l+1\right)\\\text{for all }u,v\in\mathbb{R}_+^m.
\end{array}
\end{equation*}
\item[\textbf{(A6)}] There exists $K>0$ and $1\le r<3$ such that
\begin{equation*}
\begin{array}{l}
\text{for each }i=1,...,m,\text{ if }S\in\mathbb{R}_+\text{ and }u,v\in\mathbb{R}_+^m,\text{ then either}\\
-\dfrac{1}{y_{u,i}}\alpha_i(u,v)u_i+\beta_i(u,v)v_i\le K\left(\sum_{i=1}^m (u_i+v_i)^r+1\right)\\
\text{or}\\
\alpha_i(u,v)u_i-\dfrac{1}{y_{v,i}}\beta_i(u,v)v_i\le K\left(\sum_{i=1}^m (u_i+v_i)^r+1\right).
\end{array}
\end{equation*}
\end{enumerate}

\medskip\noindent Note that the power $l\ge1$ in (A5) above can be arbitrarily large, and the condition on $r$ in (A6) ultimately restricts the growth rate of at most one of $\alpha_i(u,v)$ or $\beta_i(u,v)$ for each $i=1,...,m$.

\medskip Note that the restriction on $r$ in (A6) leads to a sub-cubic intermediate sum in the sense discussed in \cite{FMTY} in the case of one space dimension, and as a result, this assumption should not be surprising. The recent work in \cite{KST} and \cite{STY} for reaction-diffusion models on bounded intervals might lead us to believe that $r=3$ is possible in (A6). However, careful analysis shows that the inclusion of the first spatial derivative terms in our system makes this impossible. The work in  \cite{KST} and \cite{STY} relies heavily on the spatial differential operators being constant multiples of one another, and the inclusion of the first derivative terms in (\ref{S1}) violates this. As a result, the arguments in \cite{KST} and \cite{STY} do not carry over to this setting, and we seem to be stuck with the sub cubic restriction listed above.

Our first result is stated below, and proved in the next section.

\medskip
\begin{theorem}\label{thm3}
Suppose (A1)-(A3), (A5) and (A6) are satisfied, and $y_{u,i}y_{v,i}\le1$ for all $i=1,...,m$. Then (\ref{S1})-(\ref{S3}) has a unique classical componentwise nonnegative global solution. Furthermore,
\begin{equation}\label{Sbound}
\|S(\cdot,t)\|_{\infty,(0,1)}=\|w_1(\cdot,t)\|_{\infty,(0,1)}\le \max\left\{\gamma_S,\|S_0\|_{\infty,(0,1)}\right\},
\end{equation} and if there exists $C>0$ independent of $\phi$ such that whenever
$$\|u_i(\cdot,t)\|_{1,(0,1)},\|v_i(\cdot,t)\|_{1,(0,1)}\le C\text{ for all }t>0\text{ and }i=1,...,m,$$
then there exists $\tilde{C}>0$ such that
$$\|S(\cdot,t)\|_{\infty,(0,1)},\|u_i(\cdot,t)\|_{\infty,(0,1)},\|v_i(\cdot,t)\|_{\infty,(0,1)}\le C\text{ for all }t>0\text{ and }i=1,...,m.$$
\end{theorem}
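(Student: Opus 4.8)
The plan is to derive all bounds for the truncated system (\ref{generalrdtr}) with constants independent of the cut-off $\phi$ and then to conclude through Corollary \ref{Cor1}; uniqueness and local existence are already furnished by the theory behind Theorem \ref{local}. I keep the ordering $w=(S,u_1,v_1,\dots,u_m,v_m)$ from the proof of Theorem \ref{thm2}. The substrate is disposed of first: since $f_i,g_i\ge0$ and $u_i,v_i\ge0$ the reaction in the $S$-equation is nonpositive, so $S_t\le d_0S_{xx}-S_x$, and the constant $\max\{\gamma_S,\|S_0\|_\infty\}$ is a supersolution compatible with the boundary data; the maximum principle gives (\ref{Sbound}), and in particular $f_i(S)$, $g_i(S)$ and $h(S)$ from (A5) are bounded by a constant, which I use freely below.

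Next I establish the $L^1$ layer. Because $y_{u,i}y_{v,i}\le1$, for each $i$ I choose weights $a_i,b_i>0$ with $b_iy_{u,i}\le a_i$ and $a_iy_{v,i}\le b_i$ — possible exactly under this hypothesis — so that the flocculation and deflocculation contributions to $a_iF_{2i}+b_iF_{2i+1}$ are nonpositive and the remainder is $\le C(u_i+v_i)$. Integrating and applying Gronwall controls $\int_0^1(u_i+v_i)$: at most exponentially on each $[0,T]$, which already suffices for global existence, and uniformly in $t$ precisely when the conditional hypothesis supplies uniform $L^1$ bounds. The same weighting, exactly as in (\ref{alphabetaL1}), yields the flocculation bound $\int_0^T\!\int_0^1(\alpha_iu_i+\beta_iv_i)\,dxdt\le C_T$ in $\LQ{1}$, and a duality argument in the spirit of \cite{morgan1989global,Mor90} upgrades mass control to an $\LQ{2}$ a priori bound on every component.

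The core is a global $L^p$ a priori estimate for $(u,v)$, again with constants independent of $\phi$, built on the $\LQ{1}$ and $\LQ{2}$ bounds and on (A6). For each pair $i$, (A6) gives a one-sided subcubic bound on one of the two reactions, say $F_{2i}\le Cu_i+K(\sigma^r+1)$ with $\sigma=\sum_j(u_j+v_j)$ and $r<3$. I would multiply the equations by suitable powers $w^{p-1}$, integrate, retain the dissipation $\sum\int|(w^{p/2})_x|^2$, and estimate the reaction terms by one-dimensional Gagliardo--Nirenberg interpolation against the $L^1$ mass: the subcubic exponent $r<3$ is precisely what keeps the power of the dissipation produced by the interpolation strictly below $2$, so that Young's inequality absorbs it, while the higher-degree cross terms $\alpha_iu_i$, $\beta_iv_i$ coming from (A5) are controlled not pointwise but through their $\LQ{1}$ bounds together with the dissipative weighted-sum structure secured above. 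Closing the resulting differential inequality bounds $\sup_t\int_0^1(u_i^p+v_i^p)$ for every $p<\infty$ — uniformly in $t$ when the $L^1$ input is, finite on $[0,T]$ otherwise — and a final Moser-type (heat-semigroup) bootstrap promotes these to the stated $L^\infty$ bounds.

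The main obstacle is the coupling of the two members of each pair when $d_{u,i}\neq d_{v,i}$ and the first-order advection is present. These features block the elementary comparison available when all diffusions coincide (recalled in the introduction) and, as the authors note about \cite{KST,STY}, they forbid the borderline $r=3$, forcing the estimate to live strictly below the cubic threshold. Concretely, only one reaction per pair carries a subcubic bound; its partner is reached only through the merely linear (not pointwise) control of the weighted sum and through the $\LQ{1}$ control of the flocculation terms, whose degree $l$ in (A5) may be arbitrarily large. Turning this asymmetric information into two-sided $L^p$ and then $L^\infty$ bounds for \emph{both} components, while keeping every constant independent of $\phi$ and correctly distinguishing uniform-in-$t$ from finite-$[0,T]$ control for the two conclusions, is the delicate heart of the argument; by comparison the maximum principle for $S$, the weighting from $y_{u,i}y_{v,i}\le1$, and the boundary terms generated by the Robin/advection data are routine.
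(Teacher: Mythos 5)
Your skeleton matches the paper's: truncate via (\ref{generalrdtr}), bound $S$ by the maximum principle, get $L^1$ bounds from weights that exist exactly when $y_{u,i}y_{v,i}\le1$, push to $L^p$ and then $L^\infty$ with constants independent of $\phi$, and conclude through Corollary \ref{Cor1}. The gap is at the step you yourself flag as ``the delicate heart'': the mechanism you propose for the $L^p$ estimate does not close. If you test the $u_i$ equation with $u_i^{p-1}$ and the $v_i$ equation with $v_i^{p-1}$, the reactions produce the cross terms $\int_0^1\beta_i(u,v)v_iu_i^{p-1}dx$ and $\int_0^1\alpha_i(u,v)u_iv_i^{p-1}dx$, whose degree is $p+l$ with $l$ arbitrarily large by (A5); (A6) says nothing about these individually, since it only bounds one specific linear combination of the two reactions. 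Your plan to control them ``through their $L^1(Q_T)$ bounds together with the dissipative weighted-sum structure'' fails quantitatively: using the one-dimensional embedding $H^1(0,1)\hookrightarrow L^\infty(0,1)$ and Young's inequality, the best available estimate is
\begin{equation*}
\int_0^1\beta_i v_i\,u_i^{p-1}\,dx\le \varepsilon\left\|u_i^{p/2}\right\|_{H^1(0,1)}^2+C_\varepsilon\left(\int_0^1\beta_i v_i\,dx\right)^{p},
\end{equation*}
and the last term is the $p$-th power of a quantity that is only integrable in time (this is all (\ref{alphabetaL1more}) provides), so it cannot be handled for any $p>1$. The $L^2(Q_T)$ duality bound you invoke is likewise explicitly noted in the paper to be insufficient for global existence. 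Testing instead with the weighted sum $(a_iu_i+b_iv_i)^{p-1}$ against both equations fares no better: with $d_{u,i}\ne d_{v,i}$ the resulting diffusion quadratic form has determinant $-(d_{u,i}-d_{v,i})^2/4<0$ and is indefinite, so no dissipation survives.

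What is missing is the paper's key device (taken from \cite{MT21,FMTY}): the $L^p$-energy functional $\L_{p,i}[u_i,v_i]=\int_0^1\sum_{|\beta|=p}\binom{p}{\beta}a^{\beta_1^2}u_i^{\beta_1}v_i^{\beta_2}\,dx$ with $a\ge\max\left\{y_{u,i},\tfrac{d_{u,i}+d_{v,i}}{2\sqrt{d_{u,i}d_{v,i}}}\right\}$, $a\ge1$. By Lemma \ref{Hp-lem7}, its time derivative pairs each monomial $u_i^{\beta_1}v_i^{\beta_2}$ with the single weighted combination $a^{2\beta_1+1}\partial_tu_i+\partial_tv_i$, and two things then happen simultaneously: the diffusion part produces the positive definite matrices $A_i$ (hence genuine dissipation of $(u_i^{p/2})_x$ and $(v_i^{p/2})_x$ despite unequal diffusivities and the advection), and the reactions enter only through $a^qF_{2i}+F_{2i+1}$ with $q=2\beta_1+1$, for which the pointwise algebraic cancellation (\ref{a2stuff}) — valid for every $q\ge1$ because $y_{u,i}y_{v,i}\le1$ and $a\ge y_{u,i}$ — eliminates the arbitrarily-high-degree flocculation terms and leaves only the subcubic remainder $K_q\left(\sum_j(u_j+v_j)^r+1\right)$. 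Only at that point does your (correct) interpolation step, namely Lemma \ref{lm1} with $\eta=r-1<2$, absorb the remainder into the dissipation, after which the induction on $p$, the trace estimate for the Robin boundary terms, and the $L^\infty$ conclusion via \cite{LSU68,Nit} and Corollary \ref{Cor1} go through. Without this functional, the asymmetric one-sided information in (A6) cannot be converted into $L^p$ bounds for both components, so as written your argument has a genuine gap rather than a mere omission of detail.
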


\medskip There are a few assumptions that guarantee uniform $L^1$ estimates for $u$ and $v$. One of these can be obtained by imposing the following assumption.
\medskip
\begin{enumerate}
\item[\textbf{(A7)}] $y_{u,i}y_{v,i}<1$ for all $i=1,...,m$, and there exists $\delta>0$ such that
\begin{equation*}
\begin{array}{l}
\delta\sum_{i=1}^m\left(u_i+v_i\right)\le \sum_{i=1}^m\left[\alpha_i(u,v)u_i+\beta_i(u,v)v_i\right]+1\\
\text{for all }u,v\in\mathbb{R}_+^m.
\end{array}
\end{equation*}
\end{enumerate}

Another assumption requires $y_{u,i}y_{v,i}$ to be sufficiently small for each $i=1,...,m$, based upon eigenfunctions associated with linear problems associated with our system. This was proved in \cite{ZA} for the system (\ref{Sa1})-(\ref{Sa3}), and it can be extended to the setting of (\ref{S1})-(\ref{S3}). To this end, for $d>0$, let $\lambda_{0,d}>1$ be the principle eigenvalue associated with
\begin{equation}\label{eigen}
\begin{array}{cc}
-d\phi''(x)-\phi'(x)=\lambda_{0,d}\phi(x),&0<x<1,\\
\phi'(0)=0,\,\,\,d\phi'(1)+\phi(1)=0.
\end{array}
\end{equation}
Then, for each $i\in\{1,...,m\}$ select principal eigen functions $\phi_{d_0}$, $\phi_{d_{i,u}}$ and $\phi_{d_{i,v}}$ associated with $d=d_0$, $d=d_{u,i}$ and $d=d_{v,i}$, respectively, so that
\begin{equation}\label{comparephi}
0<\phi_{d_{v,i}}(x)\le \phi_{d_{i,u}}(x)\le\phi_{d_0}(x),
\end{equation}
$\phi_{d_0}(0)=1$, and $\phi_{d_{v,i}}$ and $\phi_{d_{i,v}}$ are otherwise scaled as large as possible. Note that $$0<\min_{0\le x\le1}\frac{\phi_{d_{v,i}}(x)}{\phi_{d_{u,i}}(x)}<1\text{ when }d_{u,i}\ne d_{v,i}.$$

The following corollary is proved in the next section.

\begin{corollary}\label{Linfestimate}
Suppose (A1)-(A3), (A5) and (A6) are satisfied.
If either (A7) is true or $y_{u,i}y_{v,i}\le\frac{\phi_{d_{v,i}}(x)}{\phi_{d_{u,i}}(x)}$ for all $x\in[0,1]$, then there exists $C>0$ so that
$$\|S(\cdot,t)\|_{\infty,(0,1)}\|u_i(\cdot,t)\|_{\infty,(0,1)},\|v_i(\cdot,t)\|_{\infty,(0,1)}\le C$$
for all $i=1,...,m$ and $t\ge0$.
\end{corollary}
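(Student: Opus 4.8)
The plan is to verify the hypothesis of Theorem \ref{thm3}: it suffices to produce a constant $C>0$, independent of the cutoff $\phi$, so that the solution $w$ of (\ref{generalrdtr}) (written in the reordered variables $S,u_i,v_i$) satisfies $\|u_i(\cdot,t)\|_{1,(0,1)},\|v_i(\cdot,t)\|_{1,(0,1)}\le C$ for all $t\ge0$ and $i$; the $L^\infty$ bound (\ref{Sbound}) on $S$ and Theorem \ref{thm3} then give the claim. I would obtain these $L^1$ bounds from a weighted functional $\mathcal{U}(t)\ge0$ satisfying a differential inequality $\frac{d}{dt}\mathcal{U}\le-\lambda_*\mathcal{U}+C_0$ with $\lambda_*>0$ and $C_0,\lambda_*$ independent of $\phi$, which forces $\mathcal{U}(t)\le\max\{\mathcal{U}(0),C_0/\lambda_*\}$.

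For the spectral condition I would use the principal eigenfunctions as weights. Multiplying the $S$-, $u_i$- and $v_i$-equations of (\ref{generalrdtr}) by $\phi_{d_0}$, $\phi_{d_{u,i}}$ and $\phi_{d_{v,i}}$ and integrating, two integrations by parts convert each operator $D\partial_{xx}-\partial_x$ into $-\lambda_{0,D}\int(\cdot)\phi_D\,dx$ plus a constant feed term: the boundary contributions vanish at $x=1$ (using $w_{i,x}(1,t)=0$ and $d\phi_D'(1)+\phi_D(1)=0$ from (\ref{eigen})) and collapse at $x=0$ to the feed rate times $\phi_D(0)$ (using $\phi_D'(0)=0$ and the Robin condition in (\ref{generalrd})). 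Setting $\mathcal{U}=c_0\int_0^1 S\phi_{d_0}\,dx+\sum_i\int_0^1(u_i\phi_{d_{u,i}}+\mu_i v_i\phi_{d_{v,i}})\,dx$, the linear terms give $-\lambda_*\mathcal{U}$ with $\lambda_*=\min\{\lambda_{0,d_0},\lambda_{0,d_{u,i}},\lambda_{0,d_{v,i}}\}>1$. Since $\phi_{d_{u,i}},\phi_{d_{v,i}}\le\phi_{d_0}$ by (\ref{comparephi}), choosing $c_0\ge\max_i\{1,\mu_i\}$ makes the growth contribution $\int_0^1\phi(w)\sum_i[f_iu_i(\phi_{d_{u,i}}-c_0\phi_{d_0})+g_iv_i(\mu_i\phi_{d_{v,i}}-c_0\phi_{d_0})]\,dx$ nonpositive, so that the substrate consumption in the $S$-equation exactly absorbs the growth terms in the $u_i,v_i$-equations, with no hypothesis on the size of $f_i,g_i$.

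The flocculation contribution is $\int_0^1\phi(w)\sum_i[\alpha_iu_i(\mu_i\phi_{d_{v,i}}-y_{u,i}^{-1}\phi_{d_{u,i}})+\beta_iv_i(\phi_{d_{u,i}}-\mu_i y_{v,i}^{-1}\phi_{d_{v,i}})]\,dx$, and the crux of this case is to pick the single constant $\mu_i>0$ so that both bracketed coefficients are $\le0$ for every $x$. Writing $\rho_i(x)=\phi_{d_{v,i}}(x)/\phi_{d_{u,i}}(x)$, this asks for $y_{v,i}/\min_x\rho_i\le\mu_i\le 1/(y_{u,i}\max_x\rho_i)$; because the eigenfunctions are scaled as large as possible subject to (\ref{comparephi}) we have $\max_x\rho_i=1$, so this interval is nonempty precisely when $y_{u,i}y_{v,i}\le\min_x\rho_i$, which is exactly the stated hypothesis $y_{u,i}y_{v,i}\le\phi_{d_{v,i}}(x)/\phi_{d_{u,i}}(x)$ for all $x$. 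With such $\mu_i$ the flocculation term is $\le0$, the differential inequality holds, and since each eigenfunction is bounded below by a positive constant on $[0,1]$ the bound on $\mathcal{U}$ transfers to the required uniform $L^1$ bounds.

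For the condition (A7) I would instead use constant weights, taking $\Sigma=a_0 S+\sum_i(p_iu_i+q_iv_i)$, where $y_{u,i}y_{v,i}<1$ lets me solve $y_{u,i}^{-1}p_i-q_i=y_{v,i}^{-1}q_i-p_i=1$ (so the net flocculation coefficient becomes $-(\alpha_iu_i+\beta_iv_i)$) and $a_0\ge\max_i\{p_i,q_i\}$ kills the growth terms as above; integrating over $(0,1)$ yields $\frac{d}{dt}\int_0^1\Sigma\,dx\le B_0-\int_0^1\phi(w)\sum_i(\alpha_iu_i+\beta_iv_i)\,dx$, and the coercivity in (A7) rewrites the dissipative integral as $-\delta\int_0^1\phi(w)\sum_i(u_i+v_i)\,dx+1$. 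I expect the main obstacle to lie exactly here: the coercive dissipation carries the factor $\phi(w)\in[0,1]$, so one must show the truncation does not spoil the uniform-in-time decay — for instance by retaining the boundary sinks $-u_i(1,t),-v_i(1,t)$ produced by the integration by parts, or by reinstating the spectral weights of the previous case to supply genuine exponential decay on the set where $\phi<1$. Once a clean $\phi$-independent inequality $\frac{d}{dt}\mathcal{U}\le-\lambda_*\mathcal{U}+C_0$ is secured, the uniform $L^1$ bounds and hence the conclusion via Theorem \ref{thm3} follow as before.
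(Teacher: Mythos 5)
Your overall strategy is the paper's own: reduce, via Theorem \ref{thm3}, to uniform-in-time $L^1(0,1)$ bounds for the $u_i,v_i$, and obtain these from a weighted functional satisfying a differential inequality with negative linear part. Your treatment of the spectral case is correct and essentially identical to the paper's proof: the paper takes $Q=X+\theta Y+\varepsilon Z$ with $\theta=\varepsilon y_{u,i}$, which in your parametrization is exactly the choice $\mu_i=1/y_{u,i}$, the right endpoint of your admissible interval $\left[y_{v,i}/\min_x\rho_i,\;1/(y_{u,i}\max_x\rho_i)\right]$; your version is, if anything, more transparent, since it shows that the hypothesis $y_{u,i}y_{v,i}\le\phi_{d_{v,i}}(x)/\phi_{d_{u,i}}(x)$ is precisely what makes that interval nonempty (using $\max_x\rho_i\le1$ from (\ref{comparephi})). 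This case is also genuinely robust with respect to the cutoff, as you note: the decay comes from the differential operator, and the reaction combinations are pointwise nonpositive, hence stay nonpositive after multiplication by $\phi(w)\in[0,1]$.

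The gap is in the (A7) case, and it is one you created for yourself by insisting on an estimate for the truncated systems (\ref{generalrdtr}) uniform in $\phi$. You correctly observe that the dissipative term then carries the factor $\phi(w)$, so coercivity is lost where $\phi(w)<1$; but neither of your proposed repairs works: the boundary terms $-u_i(1,t),-v_i(1,t)$ do not control the $L^1$ norm, and eigenfunction weights cannot make the flocculation combination pointwise nonpositive under (A7) alone --- that requires exactly the condition $y_{u,i}y_{v,i}\le\phi_{d_{v,i}}(x)/\phi_{d_{u,i}}(x)$, i.e.\ the \emph{other} hypothesis of the corollary. The resolution, which is what the paper does, is that no $\phi$-independence is needed here at all: the hypotheses of the corollary already include those of Theorem \ref{thm3}, so (\ref{S1})-(\ref{S3}) has a global classical solution, and you may run your constant-weight computation on that solution, where no cutoff appears. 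This gives the clean inequality (cf.\ (\ref{A7L1}))
\begin{equation*}
\frac{d}{dt}\int_0^1 a\cdot\left(S,u_1,v_1,\dots,u_m,v_m\right)dx\le C_1-\varepsilon\delta\int_0^1\sum_{i=1}^m\left(u_i+v_i\right)dx,
\end{equation*}
hence uniform $L^1$ bounds (using the sup norm bound on $S$ to absorb its contribution to the functional), and the final assertion of Theorem \ref{thm3} --- uniform-in-time $L^1$ bounds imply uniform-in-time sup norm bounds --- finishes the proof. With that one adjustment your argument is complete and coincides with the paper's.
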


\medskip

One immediate question is whether finite time blow up can occur in (\ref{S1})-(\ref{S3}) when $y_{u,i}y_{v,i}>1$ for some $i\in\{1,...,m\}$. It turns out that this can occur, even if (A1)-(A3), (A5) and (A6) are satisfied.

\begin{theorem}\label{blowup}
Suppose $i=1$, $d_{u,1}=d_{v,1}$, (A1)-(A3), (A5) and (A6) are satisfied, and $y_{u,1}y_{v,1}>1$. If $\alpha(u,v)=\beta(u,v)=u_1+v_1$, then there exists $M>0$ so that if ${u_0}_1(x),{v_0}_2(x)\ge M$ for all $x\in[0,1]$, then the solution to (\ref{S1})-(\ref{S3}) blows-up in the sup norm in finite time.
\end{theorem}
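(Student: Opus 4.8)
The plan is to manufacture a single scalar supersolution that absorbs the cross terms into a genuine quadratic reaction, and then to force its blow-up by a Kaplan-type eigenfunction argument adapted to the advective operator. Local existence and componentwise nonnegativity of a classical solution on $(0,T_{\max})$ are supplied by (A1)--(A3) through Theorem \ref{local}, so I may assume $S,u_1,v_1\ge0$. Writing $u=u_1$, $v=v_1$, $d=d_{u,1}=d_{v,1}$, $y_u=y_{u,1}$, $y_v=y_{v,1}$, and using $\alpha_1=\beta_1=u+v$ together with $f_1,g_1\ge0$ and $S\ge0$, I discard the nonnegative terms $f_1(S)u$ and $g_1(S)v$ to obtain
$$u_t \ge d u_{xx} - u_x -\tfrac{1}{y_u}(u+v)u + (u+v)v, \qquad v_t \ge d v_{xx} - v_x + (u+v)u - \tfrac{1}{y_v}(u+v)v.$$
For constants $a,b>0$ I set $z=au+bv$; since both equations share the operator $d\partial_{xx}-\partial_x$,
$$z_t \ge d z_{xx} - z_x + (u+v)\left[\left(b-\tfrac{a}{y_u}\right)u + \left(a-\tfrac{b}{y_v}\right)v\right].$$
The key algebraic point is that both bracketed coefficients are positive exactly when $\frac{1}{y_v}<\frac{a}{b}<y_u$, and this interval for $a/b$ is nonempty \emph{precisely because} $y_uy_v>1$. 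Fixing such $a,b$ and writing $c_1=b-a/y_u>0$, $c_2=a-b/y_v>0$, the bounds $u+v\ge z/\max(a,b)$ and $c_1u+c_2v\ge\min(c_1/a,c_2/b)\,z$ give
$$z_t \ge d z_{xx} - z_x + \eta\,z^2,\qquad \eta=\frac{\min(c_1/a,c_2/b)}{\max(a,b)}>0,$$
and the boundary conditions for $u,v$ combine into $-d z_x(0,t)+z(0,t)=a\gamma_{u,1}+b\gamma_{v,1}=:\gamma_z\ge0$, $z_x(1,t)=0$.

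Next I would run Kaplan's method against the principal eigenfunction of (\ref{eigen}) for this value of $d$. Let $\phi$ be the positive principal eigenfunction, normalized by $\int_0^1\phi\,dx=1$, with eigenvalue $\lambda_{0,d}>1$, so that $d\phi''+\phi'=-\lambda_{0,d}\phi$, $\phi'(0)=0$, $d\phi'(1)+\phi(1)=0$. Set $\Phi(t)=\int_0^1 z(x,t)\phi(x)\,dx$. Multiplying the inequality for $z$ by $\phi>0$ and integrating by parts twice, the advective operator $d\partial_{xx}-\partial_x$ is precisely the formal adjoint of the eigenvalue operator, so the bulk term becomes $-\lambda_{0,d}\Phi$, while the boundary conditions of $\phi$ annihilate the uncontrolled traces $z(0,t),z(1,t)$ and leave only the nonnegative term $\gamma_z\phi(0)\ge0$. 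Discarding it and applying Jensen's inequality $\int_0^1 z^2\phi\,dx\ge\left(\int_0^1 z\phi\,dx\right)^2=\Phi^2$ yields the Riccati differential inequality
$$\Phi'(t)\ge \eta\,\Phi(t)^2-\lambda_{0,d}\,\Phi(t).$$

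Finally, since $\Phi(0)\ge(a+b)M\int_0^1\phi\,dx=(a+b)M$, choosing $M$ large enough forces $\Phi(0)>\lambda_{0,d}/\eta$, after which the Riccati inequality keeps $\Phi$ increasing and drives it to $+\infty$ in a finite time $T^{*}$. Because $\Phi(t)\le\|z(\cdot,t)\|_{\infty,(0,1)}\le\max(a,b)\bigl(\|u(\cdot,t)\|_{\infty,(0,1)}+\|v(\cdot,t)\|_{\infty,(0,1)}\bigr)$, the maximal existence time of Theorem \ref{local} must satisfy $T_{\max}\le T^{*}<\infty$, so the blow-up criterion there gives sup-norm blow-up. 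I expect the only delicate step to be the boundary-term bookkeeping in the integration by parts: the whole scheme succeeds because (\ref{eigen}) is exactly the adjoint problem for $d\partial_{xx}-\partial_x$ under the given mixed boundary conditions, so the $z(0,t)$ and $z(1,t)$ contributions cancel and only the favorable $\gamma_z\phi(0)$ survives. Confirming the sign of that surviving term and the nonemptiness of the admissible range $\frac{1}{y_v}<\frac{a}{b}<y_u$ are the two further places where the hypothesis $y_uy_v>1$ is used essentially.
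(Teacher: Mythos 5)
Your proof is correct and follows essentially the same path as the paper's: a positive linear combination of $u_1$ and $v_1$ whose reaction terms become a positive quadratic precisely because $y_{u,1}y_{v,1}>1$ (the paper uses the weights $y_u+1$, $y_v+1$, which lie in your admissible range $1/y_v<a/b<y_u$), tested against the principal eigenfunction of (\ref{eigen}), followed by Jensen's inequality and a Riccati-type blow-up argument for large data. The only cosmetic difference is that you combine the equations at the PDE level before integrating, while the paper first forms the eigenfunction moments $Y,Z$ and combines them afterward; your bookkeeping of the boundary term $\gamma_z\phi(0)\ge0$ is, if anything, slightly more careful than the paper's.
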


\begin{remark}
It is possible to modify the proof in the next section to show that regardless of whether the diffusion coefficients are the same, if there exists $C>0$ so that
$$C\left(\alpha(u,v)u+\beta(u,v)v+1\right)=\left(u_1+v_1\right)^2,$$
and the initial data is sufficiently large, then the solution to (\ref{S1})-(\ref{S3}) blows-up in the sup norm in finite time.
\end{remark}

\section{Proofs of Theorem \ref{thm3}, Corollary \ref{Linfestimate} and Theorem \ref{blowup}}

\medskip The strategy for proving Theorem \ref{thm3} is to modify a greatly simplified version of the proofs in \cite{FMTY} to accomodate the first spatial derivative terms in (\ref{S1}), and the boundary conditions in (\ref{S2}). We begin by proving a slight modification of Lemma 2.3 from \cite{MT21} in the one dimensional setting.

\begin{lemma}\label{lm1}
Suppose $p\ge 2$ and $z^{p/2}\in H^1(0,1)$. If $\|z\|_{p/2,(0,1)}\le M$, $0\le\eta<2$ and $\varepsilon>0$, there exists $C_{p,\varepsilon,M}>0$, independent of $z$, so that
\begin{equation}\label{eta-ineq}
\int_0^1\left|z(x)\right|^{p+\eta}dx\le \varepsilon \int_0^1\left| \left(z(x)^{p/2}\right)_x\right|^2dx+C_{p,\varepsilon,M}.
\end{equation}
\end{lemma}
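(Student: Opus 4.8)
The plan is to reduce (\ref{eta-ineq}) to a one-dimensional Gagliardo--Nirenberg interpolation inequality followed by a Young-inequality absorption, after the substitution $w=z^{p/2}$. Setting $w=z^{p/2}\in H^1(0,1)$ (which is nonnegative in the intended application), the hypothesis $\|z\|_{p/2,(0,1)}\le M$ becomes $\|w\|_{1,(0,1)}=\int_0^1 z^{p/2}\,dx=\|z\|_{p/2,(0,1)}^{p/2}\le M^{p/2}$, the left-hand side rewrites as $\int_0^1|z|^{p+\eta}\,dx=\int_0^1|w|^{q}\,dx$ with $q=2+\tfrac{2\eta}{p}$, and the right-hand integrand is exactly $|w_x|^2$ since $(z^{p/2})_x=w_x$. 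Because $p\ge2$ and $0\le\eta<2$, the exponent satisfies $2\le q<4$, and the strict upper bound $q<4$ will be the decisive feature.

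First I would invoke the one-dimensional Gagliardo--Nirenberg inequality on the bounded interval $(0,1)$ with the lowest admissible Lebesgue exponent on the right (this is the content of Lemma~2.3 of \cite{MT21} specialized to one dimension), namely
$$\|w\|_{q,(0,1)}\le C\,\|w_x\|_{2,(0,1)}^{a}\,\|w\|_{1,(0,1)}^{1-a}+C\,\|w\|_{1,(0,1)},\qquad a=\frac{2(q-1)}{3q},$$
where the additive correction term $C\,\|w\|_{1,(0,1)}$ is the standard one required on bounded domains (as opposed to the homogeneous form valid on all of $\mathbb{R}$). Raising to the power $q$ and using $(A+B)^q\le 2^{q-1}(A^q+B^q)$ yields
$$\int_0^1|w|^{q}\,dx\le C\,\|w_x\|_{2,(0,1)}^{aq}\,\|w\|_{1,(0,1)}^{(1-a)q}+C\,\|w\|_{1,(0,1)}^{q}.$$

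The key observation is that $aq=\tfrac{2(q-1)}{3}<2$ precisely because $q<4$. This strict inequality allows Young's inequality on the first term, with conjugate exponents $\tfrac{2}{aq}$ and $\tfrac{2}{2-aq}$: for any $\varepsilon>0$,
$$C\,\|w_x\|_{2,(0,1)}^{aq}\,\|w\|_{1,(0,1)}^{(1-a)q}\le \varepsilon\,\|w_x\|_{2,(0,1)}^{2}+C_{\varepsilon}\,\|w\|_{1,(0,1)}^{\sigma},\qquad \sigma=\frac{2(1-a)q}{2-aq}.$$
Finally I would insert the bound $\|w\|_{1,(0,1)}\le M^{p/2}$ into all remaining $\|w\|_{1,(0,1)}$-powers, collapsing them into a single constant $C_{p,\varepsilon,M}$, and translate $\|w_x\|_{2,(0,1)}^2=\int_0^1|(z^{p/2})_x|^2\,dx$ back to recover (\ref{eta-ineq}).

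I expect the main obstacle to be careful bookkeeping around the endpoint $q=4$ (equivalently $\eta=2$): the Young constant $C_\varepsilon$ degenerates as $q\uparrow 4$ since $2-aq\downarrow 0$, so it is essential to track that the hypotheses $p\ge2$ and $\eta<2$ force $q<4$ \emph{strictly}, and that every constant produced depends only on $p,\varepsilon,M$ and not on $z$. A secondary technical point is justifying the inhomogeneous Gagliardo--Nirenberg inequality on $(0,1)$ with the $L^1$ norm rather than a seminorm, which is exactly where the boundedness of the interval and the $L^{p/2}$ hypothesis on $z$ enter.
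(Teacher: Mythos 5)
Your proposal is correct and follows essentially the same route as the paper's proof: the substitution $w=z^{p/2}$, the inhomogeneous one-dimensional Gagliardo--Nirenberg inequality with $L^1$ on the right, the observation that $q=2+\tfrac{2\eta}{p}<4$ makes the gradient exponent strictly less than $2$, and absorption via Young's inequality with the $L^1$ bound supplying the constant $C_{p,\varepsilon,M}$. Your exponent $a=\tfrac{2(q-1)}{3q}$ is exactly the paper's $\theta$ (written there via $q=\tfrac{2}{2-3\theta}$), so the two arguments coincide step for step.
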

\begin{proof}
Let $w=z^{p/2}$. From the Gagliardo-Niremberg inequality, if $2\le q<4$ and $q=\frac{2}{2-3\theta}$, then $\frac{1}{3}\le\theta<\frac{1}{2}$, and there exists $C_q>0$ so that
$$
\int_0^1\left|w(x)\right|^qdx\le C_q\left(\left(\int_0^1\left|w_x(x)\right|^2dx\right)^{q\frac{\theta}{2}}\left(\int_0^1\left|w(x)\right|dx\right)^{q(1-\theta)}+\left(\int_0^1\left|w(x)\right|dx\right)^q\right).
$$
Also, $0<q\frac{\theta}{2}<1$. So, from above, Young's inequality implies there exists $C_{q,\varepsilon,M}>0$ so that
$$
\int_0^1\left|w(x)\right|^qdx\le \varepsilon\int_0^1\left| w_x(x)\right|^2dx+C_{q,\varepsilon,M}\left(1+\int_0^1\left|w(x)\right|dx\right)^\frac{2q(1-\theta)}{2-q\theta}.
$$
The result follows by choosing $q=2+\frac{2\eta}{p}$.
\end{proof}

\subsection{Proof of Theorem \ref{thm3}}
We assume w is the componentwise nonnegative global classical solution to (\ref{generalrdtr}), with everything defined as in the proof of Theorem \ref{thm2}. Also, we will use the names $S$, $u_i$ and $v_i$ instead of the $w_k$ names. Note that the maximum principle implies $S$ satisfies (\ref{Sbound}). We will obtain bounds for the $u_i$ and $v_i$ independent of the choice of the function $\phi$.

We start by deriving $L^1(0,1)$ bounds for $u_i$ and $v_i$ for each $i=1,...,m$, independent of $\phi$. We start by taking advantage of (\ref{figiL1}) and
(\ref{alphabetaL1}) to find
\begin{align}\label{figiL1more}
\int_0^T\int_0^1\left(\left|\phi f_i(S)u_i\right|+\left|\phi g_i(S)v_i\right|\right)dxdt\le \gamma_ST+\|S_0\|_{\infty,(0,1)}
\end{align}
and
\begin{align}\label{alphabetaL1more}
\int_0^T\int_0^1\left(\phi \alpha_i(u,v)u_{i}+\phi \beta_i(u,v)v_{i}\right)dxdt
\le &\delta\left(\left(\gamma_{u_i}+\gamma_{v_i}+\gamma_S\right)T\right.\nonumber\\&\left.+\|{u_0}_i+{v_0}_i\|_{\infty,(0,1)}+\|S_0\|_{\infty,(0,1)}\right)
\end{align}
for each $i=1,...,m$, where $$\delta>2\max_{j=1,...,m}\left\{\frac{y_{v,j}(1+y_{u,j})}{1-y_{u,j}y_{v,j}},\frac{y_{u,j}(1+y_{v,j})}{1-y_{u,j}y_{v,j}}\right\}.$$
Now, set
$$y_{\text{max}}=\max_{i=1,...,m}\left\{y_{u,i}.y_{v,i},1\right\},$$
and define
\begin{equation*}
\vec{v}=\left(y_{\text{max}},1+y_{u,1},1+y_{v,1},...,1+y_{u,m},1+y_{v,m}\right).
\end{equation*}
Then integrating $\vec{v}\cdot w$ (i.e. $\vec{v}\cdot (S,u_1,v_1,...,u_m,v_m)$) over $(0,1)\times(0,T)$ results in a bound for $\|u_i(\cdot,t)\|_{1,(0,1)}$ and $\|v_i(\cdot,t)\|_{1,(0,1)}$, for all $i=1,...,m$ in terms of a continuous function of $t\ge0$ that is independent of $\phi$.

Now we derive $L^p(0,1)$ estimates for $u_i$ and $v_i$ for each $2\le p<\infty$ and $i=1,...,m$. Without loss of generality, we assume (A6) holds with the first option for each $i=1,...,m$. That is, we assume there exists $K>0$ and $1\le r<3$ such that
\begin{equation}\label{wlog}
\left\{\begin{array}{l}
\text{for each }i=1,...,m,\text{ if }S\in\mathbb{R}_+\text{ and }u,v\in\mathbb{R}_+^m,\text{ then }\\
-\dfrac{1}{y_{u,i}}\alpha_i(u,v)u_i+\beta_i(u,v)v_i\le K\left(\sum_{i=1}^m (u_i+v_i)^r+1\right).\\
\end{array}\right.
\end{equation}
Choose $a\ge1$ such that
\begin{equation}\label{a1stuff}
a\ge \max\left\{y_{u,i},\frac{d_{u,i}+d_{v,i}}{2\sqrt{d_{u,i}d_{v,i}}}\right\}
\end{equation}
for each $i=1,...,m$. Note that (\ref{a1stuff}) implies
the matrix $$\left(\begin{array}{cc}d_{u,i}a^2&\frac{d_{u,i}+d_{v,i}}{2}\\\frac{d_{u,i}+d_{v,i}}{2}&d_{v,i}\end{array}\right)$$ is positive definite  for each $i=1,...,m$, and, combining this with (\ref{wlog}), for each $q\ge1$ there exists $K_q>0$ so that
\begin{align}\label{a2stuff}
a^qf_i(S)u_i+g_i(S)v_i+a^q&\left(-\frac{1}{y_{u,i}}\alpha_i(u,v)u_i+\beta_i(u,v)v_i\right)+\left(\alpha_i(u,v)u_i-\frac{1}{y_{v,i}}\beta_i(u,v)v_i\right)\nonumber\\
&\le K_q\left(\sum_{j=1}^m (u_j+v_j)^r+1\right).
\end{align}

Now, construct an $L^p$-energy functional. We write $\mathbb{Z}_{+}^{2}$ for the set of all $2$-tuples of non negative integers. Addition and
scalar multiplication by non negative integers of elements in $\mathbb{Z}_{+}^{2}$
is understood in the usual manner. If $\beta=(\beta_{1},\beta_{2})\in \mathbb{Z}_{+}^{2}$ and $p\in \mathbb N$,
then we define $\beta^{p}=((\beta_{1})^{p},(\beta_{2})^{p})$.
Also, if $\alpha=(\alpha_{1},\alpha_{2})\in  \mathbb{Z}_{+}^{2}$, then
we define $|\alpha|=\alpha_{1}+\alpha_2$. Finally, if $p$ is a positive integer, we define
$$\begin{pmatrix}
	p\\ \beta\end{pmatrix}=\frac{p!}{\beta_1!\beta_2!}.$$

 Let $i\in\{1,...,\}$ and $2\leq p\in \mathbb N$. We build our $L^p$-energy functional
\begin{equation}\label{Lp}
	\L_{p,i}[u_i,v_i](t) = \int_0^1 \H_{p,i}[u_i,v_i](t),
\end{equation}
where
\begin{equation}\label{Hp}
	\H_{p,i}[u_i,v_i](t) = \sum_{\beta\in \mathbb Z_+^{2}, |\beta| = p}\begin{pmatrix}
	p\\ \beta\end{pmatrix}a^{\beta_1^2}u_i^{\beta_1}v_i^{\beta_2},
\end{equation}
with the value $a$ chosen as above. For convenience, we drop the subscript $\beta\in \mathbb Z_+^{2}$ in the sums below, as it should be clear.

The following result can be found in \cite{MT21}.

\begin{lemma}\label{Hp-lem7}
	Suppose $\H_{p,i}[u_i,v_i]$ is defined in (\ref{Hp}). Then
	\begin{equation*}
	\frac{\partial}{\partial t}\H_{p,i}[u_i,v_i](t) = \sum_{|\beta| = p-1}\begin{pmatrix} p\\ \beta \end{pmatrix} a^{\beta_1^2}u_i^{\beta_1}v_i^{\beta_2}\left(a^{2\beta_{1}+1}\frac{\partial}{\partial t}u_{i}+\frac{\partial}{\partial t}v_{i}\right),
	\end{equation*}
and
\begin{align*}
\sum_{|\beta|=p-1}\begin{pmatrix}p\\ \beta\end{pmatrix}a^{\beta_1^2}&\left(a^{2\beta_{1}+1}d_{u,i}{u_i}_{x}+d_{v,i}{v_i}_{x}\right)\left( u_i^{\beta_1}v_i^{\beta_2}\right)_x \\&=\sum_{|\beta|=p-2}\begin{pmatrix}p\\ \beta\end{pmatrix}a^{\beta_1^2}u_i^{\beta_1}v_i^{\beta_2}\left({u_i}_x\quad{v_i}_x\right)A_i\left(\begin{array}{c}{u_i}_x\\{v_i}_x\end{array}\right),
\end{align*}
	where
\begin{equation*}
A_i=\left(
\begin{array}{cc}
d_{u,i}a^{4\beta_1+4}&\frac{d_{u,i}+d_{v,i}}{2}a^{2\beta_1+1}\\
\frac{d_{u,i}+d_{v,i}}{2}a^{2\beta_1+1}&d_{v,i}
\end{array}
\right).
\end{equation*}
\end{lemma}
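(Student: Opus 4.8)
The plan is to prove both identities by directly differentiating the definition \eqref{Hp} and then shifting the summation index, the whole computation resting on two elementary facts: the combinatorial identity $\begin{pmatrix}p\\\beta\end{pmatrix}\beta_1=\begin{pmatrix}p\\\gamma\end{pmatrix}$ with $\gamma=(\beta_1-1,\beta_2)$ (and its twin $\begin{pmatrix}p\\\beta\end{pmatrix}\beta_2=\begin{pmatrix}p\\\gamma\end{pmatrix}$ with $\gamma=(\beta_1,\beta_2-1)$), which lowers the level from $|\beta|=p$ to $|\beta|=p-1$ while consuming the factor thrown off by the power rule; and the expansion $a^{(\gamma_1+1)^2}=a^{\gamma_1^2}a^{2\gamma_1+1}$, which records how the weight $a^{\beta_1^2}$ transforms under a unit shift of the first index.

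For the first identity I would differentiate each monomial in time, writing $\partial_t\left(u_i^{\beta_1}v_i^{\beta_2}\right)=\beta_1 u_i^{\beta_1-1}v_i^{\beta_2}\,\partial_t u_i+\beta_2 u_i^{\beta_1}v_i^{\beta_2-1}\,\partial_t v_i$, and split the resulting sum into a $\partial_t u_i$ piece and a $\partial_t v_i$ piece. In the first piece I reindex by $\gamma=(\beta_1-1,\beta_2)$; the combinatorial identity turns $\begin{pmatrix}p\\\beta\end{pmatrix}\beta_1$ into $\begin{pmatrix}p\\\gamma\end{pmatrix}$, and the weight becomes $a^{\gamma_1^2}a^{2\gamma_1+1}$, which is exactly the coefficient of $\partial_t u_i$ in the claimed formula. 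In the second piece I reindex by $\gamma=(\beta_1,\beta_2-1)$; here the first index is unchanged, the weight stays $a^{\gamma_1^2}$, and $\partial_t v_i$ appears with coefficient $1$. Adding the two reindexed sums, both now at level $p-1$, yields the stated expression.

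For the second identity I would expand $\left(u_i^{\beta_1}v_i^{\beta_2}\right)_x$ by the product rule, multiply by $a^{2\beta_1+1}d_{u,i}{u_i}_x+d_{v,i}{v_i}_x$, and collect the terms into the three types $({u_i}_x)^2$, ${u_i}_x{v_i}_x$ and $({v_i}_x)^2$, reindexing each down to level $p-2$. The $({u_i}_x)^2$ terms come from the $d_{u,i}{u_i}_x$ factor acting on the $u_i$-derivative of the monomial; after the shift $\gamma=(\beta_1-1,\beta_2)$ the weight becomes $a^{\gamma_1^2}a^{4\gamma_1+4}$, matching the $(1,1)$ entry $d_{u,i}a^{4\beta_1+4}$ of $A_i$, while the $({v_i}_x)^2$ terms keep the weight $a^{\gamma_1^2}$ and match the $(2,2)$ entry $d_{v,i}$. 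The off-diagonal entry is assembled from two cross terms, namely $d_{u,i}{u_i}_x$ against the $v_i$-derivative and $d_{v,i}{v_i}_x$ against the $u_i$-derivative; after their respective shifts both carry the common weight $a^{\gamma_1^2}a^{2\gamma_1+1}$, so they sum to $(d_{u,i}+d_{v,i})a^{2\gamma_1+1}$, which is twice the symmetric off-diagonal entry $\frac{d_{u,i}+d_{v,i}}{2}a^{2\beta_1+1}$ of $A_i$.

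The computation is entirely mechanical, and the one point demanding care is the bookkeeping of the powers of $a$: because the weight $a^{\beta_1^2}$ is quadratic in $\beta_1$, a unit shift in the first index generates precisely the exponents $2\beta_1+1$ and $4\beta_1+4$ recorded in the statement, and the two cross terms combine into a single off-diagonal entry only because these exponents agree exactly. I expect this exponent tracking to be the main—indeed essentially the only—obstacle; once it is handled, matching coefficients monomial by monomial completes both identities.
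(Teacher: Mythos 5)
Your proof is correct, and it is worth noting that the paper itself does not prove this lemma at all: it simply states that the result ``can be found in \cite{MT21}'' and moves on, so your direct computation supplies something the paper omits, namely a self-contained verification. I checked the two places where errors typically creep in, and both are handled correctly: the index-lowering identity $\binom{p}{\beta}\beta_1=\binom{p}{\gamma}$ with $\gamma=(\beta_1-1,\beta_2)$ is valid precisely because the paper's convention keeps $p!$ in the numerator regardless of $|\beta|$, and the exponent bookkeeping works out as you say --- in the $({u_i}_x)^2$ term the combined weight $a^{\beta_1^2}a^{2\beta_1+1}$ becomes $a^{(\gamma_1+1)^2}a^{2(\gamma_1+1)+1}=a^{\gamma_1^2}a^{4\gamma_1+4}$, matching the $(1,1)$ entry of $A_i$, while the two cross terms acquire the common weight $a^{\gamma_1^2}a^{2\gamma_1+1}$ (one from shifting the quadratic weight, the other from carrying the factor $a^{2\beta_1+1}$ with an unshifted first index), so they merge into the symmetric off-diagonal entries. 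Terms with $\beta_1=0$ or $\beta_2=0$ are annihilated by the factors $\beta_1$, $\beta_2$ produced by the product rule, so the reindexing is legitimate. Your route therefore coincides with the cited source (where the lemma is proved by the same direct differentiation and reindexing) rather than with anything in this paper; what it buys is that Section 4 of the present paper becomes self-contained.
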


\bigskip

\bigskip Now choose $p\ge2$ such that $u_j(\cdot,t),v_j(\cdot,t)\in L_{p/2}(0,1)$, with norms bounded above by a continuous function of $t$ for $t\ge0$. From above, we are guaranteed that $p=2$ is such a value. From Lemma \ref{Hp-lem7},

\begin{equation}\label{f5}
\bra{\L_{p,i}[u_i,v_i]}'(t)=\int_0^1\sum_{|\beta|=p-1}\left(\begin{array}{c}
p\\
\beta
\end{array}\right)a^{\beta_1^{2}}u_i^{\beta_1}v_i^{\beta_2}\left(a^{2\beta_{1}+1}\frac{\partial}{\partial t}u_{i}+\frac{\partial}{\partial t}v_{i}\right)dx
\end{equation}
Note that from (\ref{wlog}), (\ref{a1stuff}) and (\ref{a2stuff}), we get
\begin{align}\label{uivi}
a^{2\beta_{1}+1}\frac{\partial}{\partial t}u_{i}+\frac{\partial}{\partial t}v_{i}\le a^{2\beta_{1}+1}\left(d_{u,i}{u_i}_{xx}-{u_i}_x\right)&+\left(d_{v,i}{v_i}_{xx}-{v_i}_x\right)\\
&+K_{2\beta_1+1}\left(\sum_{j=1}^m(u_j+v_j)^r+1\right).
\end{align}
The function $\phi$ does not appear above because $0\le\phi(w)\le1$. This allows us to rewrite (\ref{f5}) in the form
\begin{equation}\label{f6}
\bra{\L_{p,i}[u_i,v_i]}'(t)=(I)+(II),
\end{equation}
where
$$(I)=\int_0^1\sum_{|\beta|=p-1}\left(\begin{array}{c}
p\\
\beta
\end{array}\right)a^{\beta_1^{2}}u_i^{\beta_1}v_i^{\beta_2}\left(a^{2\beta_{1}+1}\left(d_{u,i}{u_i}_{xx}-{u_i}_x\right)+\left(d_{v,i}{v_i}_{xx}-{v_i}_x\right)\right)dx$$
and
$$
(II)=\int_0^1\sum_{|\beta|=p-1}\left(\begin{array}{c}
p\\
\beta
\end{array}\right)a^{\beta_1^{2}}u_i^{\beta_1}v_i^{\beta_2}K_{2\beta_1+1}\left(\sum_{j=1}^m(u_j+v_j)^r+1\right)dx.
$$
Our boundary conditions, integration by parts and Lemma \ref{Hp-lem7} imply
\begin{align}\label{Iinfo}
(I)=-&\int_0^1\sum_{|\beta|=p-2}\begin{pmatrix}p\\ \beta\end{pmatrix}a^{\beta_1^2}u_i^{\beta_1}v_i^{\beta_2}\left({u_i}_x\quad{v_i}_x\right)A_i\left(\begin{array}{c}{u_i}_x\\{v_i}_x\end{array}\right)dx+(III)\nonumber\\
&+\int_0^1\sum_{|\beta|=p-1}\left(\begin{array}{c}
p\\
\beta
\end{array}\right)a^{\beta_1^{2}}u_i^{\beta_1}v_i^{\beta_2}\left(a^{2\beta_{1}+1}{u_i}+{v_i}\right)dx,
\end{align}
where
\begin{equation}\label{bcinfo}
(III)=\sum_{|\beta|=p-1}\left(\begin{array}{c}
p\\
\beta
\end{array}\right)a^{\beta_1^{2}}u_i(0,t)^{\beta_1}v_i(0,t)^{\beta_2}\left(a^{2\beta_{1}+1}\gamma_{u,i}+\gamma_{v,i}\right).
\end{equation}
From the choice of $a$ and Lemma \ref{Hp-lem7}, there exists $c_p,\delta_p>0$ independent of $i$ so that
\begin{align}\label{I2info}
(I)\le-2\delta_p&\int_0^1\left(\left|\left({u_i}^{p/2}\right)_x\right|^2+ \left|\left({v_i}^{p/2}\right)_x\right|^2\right)dx\nonumber\\
&+c_p\left(u_i(0,t)^{p-1}+v_i(0,t)^{p-1}
+\int_0^1\left({u_i}^{p}+ {v_i}^{p}\right)dx\right)
\end{align}
and
\begin{equation}\label{IIinfo}
(II)\le c_p\int_0^1 \sum_{j=1}^m\left(u_j^{p+r-1}+v_j^{p+r-1}+1\right)dx.
\end{equation}
Now, note that if $\varepsilon>0$, there exits $C_\varepsilon>0$ so that
\begin{align*}
u_i(0,t)^{p-1}+v_i(0,t)^{p-1}&\le C_\varepsilon+\varepsilon \left(u_i(0,t)^{p}+v_i(0,t)^{p}\right)\nonumber\\
&\le C_\varepsilon+\varepsilon K\|u_i^{p/2}+v_i^{p/2}\|_{H^1(0,1)}^2.
\end{align*}
for some $K>0$. As a result, since $r\ge1$, we can choose $\varepsilon$ sufficiently small and $C_p>0$ sufficiently large that
\begin{align}\label{I-IIinfo}
\bra{\L_{p,i}[u_i,v_i]}'(t)\le -\delta_p&\int_0^1\left(\left|\left({u_i}^{p/2}\right)_x\right|^2+ \left|\left({v_i}^{p/2}\right)_x\right|^2\right)dx\nonumber\\
&+C_p\int_0^1 \sum_{j=1}^m\left(u_j^{p+r-1}+v_j^{p+r-1}+1\right)dx.
\end{align}
Now, define
$$
\L_{p}[u,v](t)=\sum_{i=1}^m\L_{p,i}[u_i,v_i](t).
$$
Then (\ref{I-IIinfo}) implies
\begin{align}\label{fulldeal}
\bra{\L_{p}[u,v]}'(t)\le -\delta_p&\int_0^1\sum_{i=1}^m\left(\left|\left({u_i}^{p/2}\right)_x\right|^2+ \left|\left({v_i}^{p/2}\right)_x\right|^2\right)dx\nonumber\\
&+mC_p\int_0^1 \sum_{i=1}^m\left(u_i^{p+r-1}+v_i^{p+r-1}+1\right)dx.
\end{align}
As a result, since there exits $M\in C(\mathbb{R}_+,\mathbb{R}_+)$ such that
$$
\sum_{i=1}^m\left(\|u_i(\cdot,t)\|_{p/2,(0,1)}+\|v_i(\cdot,t)\|_{p/2,(0,1)}\right)\le M(t)\text{ for all }t\ge0,
$$
we can conclude from Lemma \ref{lm1}, (\ref{fulldeal}) and the definition of $\L_{p,i}[u,v](t)$, for each $i=1,...,m$, that there exist $M_p\in C(\mathbb{R}_+,\mathbb{R}_+)$ and $e_p>0$ such that
\begin{align}\label{fulldeal2}
\bra{\L_{p}[u,v]}'(t)\le &M_p(t) -e_p\L_{p}[u,v](t).
\end{align}
Furthermore, $\|M_p\|_{\infty,(0,\infty)}<\infty$ if $\|M\|_{\infty,(0,\infty)}<\infty$. It follows that there exists $N_p\in C(\mathbb{R}_+,\mathbb{R}_+)$ such that
\begin{align}\label{fulldeal3}
\sum_{i=1}^m\left(\|u_i(\cdot,t)\|_{p,(0,1)}+\|v_i(\cdot,t)\|_{p,(0,1)}\right)\le &N_p(t)\text{ for all }t\ge0,
\end{align}
where $\|N_p\|_{\infty,(0,\infty)}<\infty$ if $\|M\|_{\infty,(0,\infty)}<\infty$, and this bound is independent of $\phi$. From induction, this result holds for all $2\le p<\infty$. As a result, from (A6) and the results in \cite{LSU68,Nit}, we can conclude there exists $N_\infty\in C(\mathbb{R}_+,\mathbb{R}_+)$ such that
\begin{align}\label{fulldeal4}
\sum_{i=1}^m\left(\|u_i(\cdot,t)\|_{\infty,(0,1)}+\|v_i(\cdot,t)\|_{\infty,(0,1)}\right)\le &N_\infty(t)\text{ for all }t\ge0,
\end{align}
where $\|N_\infty\|_{\infty,(0,\infty)}<\infty$ if $\|M\|_{\infty,(0,\infty)}<\infty$, and this bound is independent of $\phi$. As a result, from Corollary \ref{Cor1}, the system (\ref{S1})-(\ref{S3}) has a unique classical componentwise nonnegative global solution, and the bounds above imply the conclusion of Theorem \ref{thm3}.

\subsection{Proof of Corollary \ref{Linfestimate}}
From Theorem \ref{thm3}, (\ref{S1})-(\ref{S3}) has a unique classical componentwise nonnegative global solution, and the result of Corollary \ref{Linfestimate} will be confirmed if we can prove there exists $C>0$ such that
$$\|u_i(\cdot,t)\|_{1,(0,1)},\|v_i(\cdot,t)\|_{1,(0,1)}\le C\text{ for all }t\ge0\text{ and }i=1,...,m.$$

First, suppose (A7) is true. Define
$$y_{\max}=m\max_{i=1,...,m}\left\{y_{u,i}+1,y_{v,i}+1\right\}$$
and
$$a=\left(y_{\max},y_{u,1}+1,y_{v,1}+1,y_{u,2}+1,y_{v,2}+1,...,y_{u,m}+1,y_{v,m}+1\right).$$
If
$$0<\varepsilon=\min_{i=1,...,m}\left\{\frac{1-y_{u,i}y_{v,i}}{y_{u,i}},\frac{1-y_{u,i}y_{v,i}}{y_{v,i}}\right\},\text{ }
a_{\text{max}}=\max_{i=1,...,m}a_i\text{ and } L=\max_{i=1,...,m}\left\{\gamma_S,\gamma_{u,i},\gamma_{v,i}\right\},$$
then from (A7),
\begin{align}\label{A7L1}
\frac{d}{dt}\int_0^1 a\cdot\left(S(x,t),u_1(x,t),v_1(x,t),...,u_m(x,t),v_m(x,t)\right)dx\le& \varepsilon+a_{\text{max}}L\nonumber\\
-\varepsilon\delta\int_0^1\sum_{i=1}^m\left(u_i(x,t)+v_i(x,t)\right)dx.
\end{align}
Since we have uniform sup norm bound for $S$, (\ref{A7L1}) implies there exists $C>0$ so that
$$\|u_i(\cdot,t)\|_{1,(0,1)},\|v_i(\cdot,t)\|_{1,(0,1)}\le C\text{ for all }t\ge0\text{ and }i=1,...,m.$$
Consequently, Theorem \ref{thm3} guarantees a uniform sup norm bound for $u_i$ and $v_i$ for each $i=1,...,m$.

Now, suppose $y_{u,i}y_{v,i}\le\frac{\phi_{d_{v,i}}(x)}{\phi_{d_{u,i}}(x)}$ for all $x\in[0,1]$. We apply a slight modification of the proof of Theorem 4 in \cite{ZA} to obtain a uniform $L^1(0,1)$ estimate for each $u_i$ and $v_i$ as follows. First let $i\in\{1,...,\}$ and as in \cite{ZA}, define
$$
X(t)=\int_0^1S(x,t)\phi_{d_0}(x)dx,\,Y(t)=\int_0^1u_i(x,t)\phi_{d_{u,i}}(x)dx,\,Z(t)=\int_0^1v_i(x,t)\phi_{d_{v,i}}(x)dx.
$$
Then, integration by parts implies there exists $L>0$ so that
$$
X'(t)\le C-\lambda_{d_0}X(t)-\int_0^1\left(f_i(S)u_i(x,t)+g_i(S)v_i(x,t)\right)dx,
$$
$$
Y'(t)\le C-\lambda_{d_{u,i}}Y(t)+\int_0^1\left(f_i(S)u_i(x,t)-\frac{1}{y_{u,i}}\alpha(u,v)u_i(x,t)+\beta_i(u,v)v_i(x,t)\right)\phi_{d_{u,i}}dx
$$
and
$$
Z'(t)\le C-\lambda_{d_{v,i}}Z(t)+\int_0^1\left(g_i(S)v_i(x,t)+\alpha(u,v)u_i(x,t)-\frac{1}{y_{v,i}}\beta_i(u,v)v_i(x,t)\right)\phi_{d_{v,i}}dx.
$$
Then it is possible to choose $\varepsilon>0$ sufficiently small and $\theta=\varepsilon y_{u,i}$ to guarantee
$$
\varepsilon\le \frac{\phi_{d_0}}{\phi_{d_{v,i}}},\,\theta\le \frac{\phi_{d_0}}{\phi_{d_{u,i}}},\, \frac{\varepsilon y_{u,i}}{\theta}\le \frac{\phi_{d_{u,i}}}{\phi_{d_{v,i}}}\le \frac{\varepsilon}{\theta y_{v,i}}
$$
and
$$Q(t)=X(t)+\theta Y(t)+\varepsilon Z(t),$$
then there exist $a,b>0$ so that
$$Q'(t)\le a-b Q(t).$$
The result follows.

\subsection{Proof of Theorem \ref{blowup}}
Since $i=1$, we can assume we are working with system (\ref{Sa1})-(\ref{Sa3}), and denote $d=d_1=d_2$. As a result, $d=d_{u,1}=d_{v,1}$, and we write $y_u$, $y_v$, $f(S)$, $g(S)$, $u$, $v$, $\alpha$ and $\beta$ for $y_{u,1}$, $y_{v,1}$, $f_1(S)$, $g_1(S)$, $u_1$, $v_1$, $\alpha_1$ and $\beta_1$. Throughout, we assume $S,u,v$ is our unique componentwise nonnegative classical maximal solution of (\ref{Sa1})-(\ref{Sa3}). Recall that our hypothesis implies
$$\alpha(u,v)=\beta(u,v)=u+v.$$
Let $\lambda_{0,d}>1$ be the principle eigenvalue associated with (\ref{eigen}), and choose an associated eigenfunction $\phi(x)$ with $0<\phi(x)\le1$ for all $x\in[0,1]$. Then, similar to the proof of Corollary \ref{Linfestimate}, we define
$$Y(t)= \int_0^1u(x,t)\phi(x)dx,\text{ and }Z(t)=\int_0^1v(x,t)\phi(x)dx.$$
Then, integration by parts implies
$$Y'(t)=-\lambda_{d,0}Y(t)+\int_0^1\left(f(S)u-\frac{1}{y_u}\alpha(u,v)u+\beta(u,v)v\right)\phi dx$$
and
$$Z'(t)=-\lambda_{d,0}Z(t)+\int_0^1\left(g(S)u+\alpha(u,v)u-\frac{1}{y_v}\beta(u,v)v\right)\phi dx.$$
Consequently, if we define
$$Q(t)=(y_u+1)Y(t)+(y_v+1)Z(t),$$
then
$$Q'(t)\ge-\lambda_{d,0}Q(t)+\frac{y_uy_v-1}{y_u}\int_0^1\alpha(u,v)u\phi dx+\frac{y_uy_v-1}{y_v}\int_0^1\beta(u,v)v\phi dx.$$
Now, let
$$\varepsilon=\min\left\{\frac{y_uy_v-1}{y_u},\frac{y_uy_v-1}{y_v}\right\}.$$
Since $\alpha(u,v)=\beta(u,v)=u+v$ and $0<\phi(x)\le1$, we have
$$Q'(t)\ge -\lambda_{d,0}Q(t)+\varepsilon\int_0^1\left(u+v\right)^2\phi dx.$$
Therefore, there exists $\tilde\varepsilon>0$ so that
$$Q'(t)\ge -\lambda_{d,0}Q(t)+\tilde\varepsilon Q(t)^2.$$
Consequently, if $$Q(0)>\frac{\lambda_{d,0}}{\tilde\varepsilon},$$ then $Q(t)$ blows up in finite time. The result follows.

\medskip

\section{Steady State Results for (\ref{Sa1})-(\ref{Sa2})}
In this section, we are interested in the existence of a non trivial steady-state solution with microorganisms present in the medium, in the case $\gamma_{S}=1,\gamma_{u}=0,\gamma_{v}=0$. So, we show that there exist $u$ or $v$ that behaves like the principal eigenfunction of problem $(P_{\lambda})$ below, associated respectively with $\lambda_{d_{1}},\lambda_{d_{2}}$, that is, $$u\approx \phi_{d_{1}}\,or\,v\approx \phi_{d_{2}},$$
in the sense that there exists a positive constant $c\geq1$ such that $\frac{1}{c} \phi_{d_{1}}(x)\leq u(x)\leq c\phi_{d_{1}}(x)$ or $\frac{1}{c} \phi_{d_{2}}(x)\leq v(x)\leq c\phi_{d_{2}}(x),\,\text{ for all, } 0\leq x\leq1.$

To obtain our result, we consider the eigenvalue problem introduced in \cite{BL}:
\begin{equation*}
(P_{\lambda }) \left \{
\begin{array}{l}
\displaystyle \lambda \phi =d\phi ''-\phi '.
\\
\displaystyle -d\phi '(0)+\phi (0)=0,\,\,\phi '(1)=0,
\end{array}%
\right .
\end{equation*}
where $d$ is a positive constant. The eigenvalues
$\{\lambda _{n}\}_{n\geq 0}$ of $(P_{\lambda })$ satisfy
$\lambda _{n+1}<\lambda _{n},\,\,\forall n\geq 0$ and
$\lambda _{0}<-1$. To emphasize the dependence of
$\lambda _{0}$ on $d$ and consider its sign, we use
$\lambda _{d}=-\lambda _{0}$, and we denote by $\phi _{d}$ the principal eigenfunction
associated with $\lambda _{d}$.%

Our results are obtained by means of Schauder fixed point theorem combined with spectral theory.
So, it is convenient to make the change of variables $\widetilde{S}=1-S$ and we will always interpret $1-\widetilde{S}$ as the positive part of it: $(1-\widetilde{S})_{+}$, so that $\widetilde{S}$ satisfies homogeneous boundary conditions. We then have, the following steady-state system,
\begin{equation}\label{SS1}
\left\{
\begin{array}{l}
-d_{0}\widetilde{S}_{xx}+\widetilde{S}_{x}=f(1-\widetilde{S})u+g(1-\widetilde{S})v\\
-d_{1}u_{xx}+u_{x}=f(1-\widetilde{S})u+\beta(u,v)v-\frac{1}{y_{u}}\alpha(u,v)u\\
-d_{2}v_{xx}+v_{x}=g(1-\widetilde{S})v+\alpha(u,v)u-\frac{1}{y_{v}}\beta(u,v)v\\
\end{array}%
\right.
\end{equation}%
with boundary conditions
\begin{equation}\label{SS1b}
\left\{
\begin{array}{l}
-d_{0}\widetilde{S}_{x}(0)+\widetilde{S}(0)=0,\\
-d_{1}u_{x}(0)+u(0)=-d_{2}v_{x}(0,t)+v(0)=0,\\
\widetilde{S}_{x}(1)=0,\,\,u_{x}(1)=0,\,\,v_{x}(1)=0.\\
\end{array}%
\right.
\end{equation}
We can convert the differential operator defined by the left side of system
(\ref{SS1})-(\ref{SS1b}), and consider (\ref{SS1}) to be the fixed point
equation
\begin{equation*}
(\widetilde{S},u,v)=T(\widetilde{S},u,v),
\end{equation*}
where $T$ is defined by the right side of (\ref{eqp11})
on the positive cone in $(C([0,1],\mathbb{R}^{+}))^{3}$:%
%
\begin{equation}
\label{eqp11}
\begin{array}{llll}
&\widetilde{S}(x)=\displaystyle \int _{0}^{1}\exp (
\frac{\min (x,t)-t}{d_{0}})[f(1-\widetilde{S})u+g(1-\widetilde{S})v]dt,
\\
&u(x)=\displaystyle \int _{0}^{1}\exp (
\frac{\min (x,t)-t}{d_{1}})[f(1-\widetilde{S})u+\beta (u,v)v-
\frac{1}{y_{u}}\alpha (u,v)u]dt,
\\
&v(x)=\displaystyle \int _{0}^{1}\exp (
\frac{\min (x,t)-t}{d_{2}})[g(1-\widetilde{S})v+
\alpha (u,v)u-\frac{1}{y_{v}}\beta (u,v)v]dt.
\end{array}
\end{equation}
In what follows we replace the above $\widetilde{S}$ by $S$. Our first main result is the following:
\begin{theorem}
  (Extinction of attached bacteria)\\
 Assume that $\alpha(\cdot, 0)=0$ and $f$ is a non-decreasing continuous function satisfying $e^{\frac{1}{d_{1}}}<f(1)\leq\lambda_{d_{1}} $. Then (\ref{SS1})-(\ref{SS1b}) has non trivial solution $(S,u,0)$ satisfying $S<1$ and $u>0.$\\
 (Extinction of isolated bacteria)\\
 Assume $\beta(0,\cdot)=0$ and $g$ is a non-decreasing continuous function satisfying $e^{\frac{1}{d_{2}}}<g(1)\leq\lambda_{d_{2}}$, then (\ref{SS1})-(\ref{SS1b}) has non trivial solution $(S,0,v)$ satisfying $S<1$ and $v>0.$
\end{theorem}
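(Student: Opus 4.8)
The plan is to establish both assertions with one symmetric fixed--point argument; I spell out the first (extinction of the attached species, $v\equiv 0$), the second being identical after exchanging $(f,d_1,\lambda_{d_1},\phi_{d_1})$ and $\alpha(\cdot,0)=0$ for $(g,d_2,\lambda_{d_2},\phi_{d_2})$ and $\beta(0,\cdot)=0$. First I would set $v\equiv 0$: since $\alpha(\cdot,0)=0$, the right--hand side of the third equation of (\ref{SS1}) collapses to $\alpha(u,0)u=0$, so $v\equiv 0$ solves that equation and its homogeneous boundary condition identically. It therefore suffices to produce a fixed point of the first two components of $T$ in (\ref{eqp11}), namely of
\begin{equation*}
(S,u)\longmapsto\Big(\,{\textstyle\int_0^1}G_{d_0}(x,t)f(1-S)u\,dt,\ {\textstyle\int_0^1}G_{d_1}(x,t)f(1-S)u\,dt\,\Big),
\end{equation*}
where $G_d(x,t)=\exp\!\big(\tfrac{\min(x,t)-t}{d}\big)$ is the Green kernel of $-d\,\partial_{xx}+\partial_x$ under (\ref{SS1b}). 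I would use two properties of this kernel: since $G_d$ is the kernel of the inverse of the operator whose principal eigenpair is $(\lambda_d,\phi_d)$, one has $\int_0^1 G_d(x,t)\phi_d(t)\,dt=\lambda_d^{-1}\phi_d(x)$, and a direct estimate gives $e^{-1/d}\le G_d(x,t)\le 1$ on $[0,1]^2$.

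The heart of the argument is to exhibit a closed, bounded, convex invariant set. I would take the order interval
\begin{equation*}
\mathcal{K}=\big\{(S,u)\in (C([0,1]))^2:\ 0\le S\le \rho,\ \ m\le u\le c\,\phi_{d_1}\big\},
\end{equation*}
and choose the constants in the following order. Pick $\sigma\in(0,1)$ with $f(\sigma)\ge e^{1/d_1}$, possible since $f$ is continuous, nondecreasing and $f(1)>e^{1/d_1}$; set $\rho=1-\sigma$; pick $c>0$ so small that $c\,f(1)\,\|\phi_{d_1}\|_{\infty}\le\rho$; and set $m=c\,\min_{[0,1]}\phi_{d_1}>0$. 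Then $T(\mathcal{K})\subseteq\mathcal{K}$: the substrate component is nonnegative and, using $G_{d_0}\le1$ and $u\le c\phi_{d_1}$, bounded above by $c f(1)\|\phi_{d_1}\|_{\infty}\le\rho$; the upper barrier for $u$ follows from $f(1-S)\le f(1)\le\lambda_{d_1}$ together with the eigen--relation, giving $\int_0^1 G_{d_1}f(1-S)u\,dt\le c\lambda_{d_1}\int_0^1 G_{d_1}\phi_{d_1}\,dt=c\phi_{d_1}$; and the lower barrier for $u$ follows from $G_{d_1}\ge e^{-1/d_1}$, from $1-S\ge\sigma$ (so $f(1-S)\ge f(\sigma)\ge e^{1/d_1}$) and from $u\ge m$, which give $\int_0^1 G_{d_1}f(1-S)u\,dt\ge e^{-1/d_1}f(\sigma)\,m\ge m$.

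With invariance in hand I would close the argument by Schauder's theorem: $\mathcal{K}$ is convex, closed and bounded; $T$ is continuous (dominated convergence and continuity of $f$) and compact, because the bounded continuous kernels $G_{d_0},G_{d_1}$ send $\mathcal{K}$ to an equibounded, equicontinuous family, relatively compact by Arzel\`a--Ascoli. Schauder then yields $(S,u)\in\mathcal{K}$ with $T(S,u)=(S,u)$, and $(S,u,0)$ solves (\ref{SS1})--(\ref{SS1b}). Nontriviality and the strict inequalities follow from positivity of the kernels: $u\ge m>0$ gives $u>0$, while $S(x)=\int_0^1 G_{d_0}(x,t)f(1-S)u\,dt>0$ since the integrand is bounded below by $f(\sigma)\,m>0$, and $S\le\rho<1$; hence $0<S<1$ and $u>0$.

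The main obstacle I anticipate is precisely the self--consistency of the constant choices, i.e. closing the loop ``small upper barrier $c\phi_{d_1}$ on $u$ $\Rightarrow$ substrate stays near the feed, $1-S\ge\sigma$ $\Rightarrow$ $f(1-S)\ge f(\sigma)>e^{1/d_1}$ $\Rightarrow$ the positive constant lower barrier $m$ is reproduced,'' while the upper bound $f(1)\le\lambda_{d_1}$ simultaneously reproduces the spectral barrier $c\phi_{d_1}$. This is exactly where the two hypotheses enter: $f(1)\le\lambda_{d_1}$ through the eigen--relation for the super--solution, and $e^{1/d_1}<f(1)$ through the uniform kernel bound $e^{-1/d_1}$ for the sub--solution. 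Verifying that $\sigma$, $\rho$, $c$ and $m$ can be chosen to meet both requirements at once — rather than any single estimate — is the delicate step; once it is done the remaining continuity, compactness and positivity checks are routine.
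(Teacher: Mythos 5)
Your proposal is correct and follows essentially the same route as the paper: reduce to the $(S,u)$ subsystem via $v\equiv 0$ and $\alpha(\cdot,0)=0$, set up the Green-kernel fixed-point operator, build an invariant order interval from a constant lower barrier and a multiple of $\phi_{d_1}$ as upper barrier (using $f(1)\le\lambda_{d_1}$ through the eigen-relation and $e^{1/d_1}<f(1)$ through the kernel bound $G_{d_1}\ge e^{-1/d_1}$), and conclude by Schauder. The only cosmetic difference is that you cap $S$ by a constant $\rho=1-\sigma$ and close the loop by shrinking $c$, whereas the paper caps $S$ by a normalized eigenfunction $\phi_{d_0}<k$ tied to $\phi_{d_1}$ via a normalization inequality; both serve the identical purpose of guaranteeing $f(1-S)\ge f(\sigma)>e^{1/d_1}$ on the invariant set.
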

\begin{proof}(Extinction of attached bacteria)
Let $\phi _{d}>0$ be the principal eigenfunction of the Sturm--Liouville
problem $(P_{\lambda })$ corresponding to the eigenvalue $-\lambda _{d}$. We
denote by
$\phi _{d_{0}},\,\,\phi _{d_{1}}$ the eigenfunctions
associated respectively with
$-\lambda _{d_{0}},\,\,-\lambda _{d_{1}}$. Since, $f(1)> e^{\frac{1}{d_{1}}},$ then there exist $0<k<1$, satisfying that $f(1-k)> e^{\frac{1}{d_{1}}}$. We normalize $\phi _{d_{0}}$ by requiring that
$\phi _{d_{0}}<k$, and normalize $\phi _{d_{1}}$
by requiring that:
%
\begin{equation}
\label{norm1}
 \phi _{d_{1}}(x)\leq\frac{\lambda_{d_{0}}}{f(1)} \phi _{d_{0}}(x).
\,\,0\leq x\leq 1.
\end{equation}
  Let $\displaystyle c=\min_{[0,1]}\phi _{d_{1}}$ and let us define the cone $K$ by $$K:=\{(S,u)\in (C([0,1],\mathbb{R}))^{2};0\leq S\leq \phi _{d_{0}},\,c\leq u\leq \phi _{d_{1}}\}$$ and the operator $\mathfrak{T}=(T_{1},T_{2})$ on $K$ by \begin{equation}
\label{eqp110}
\begin{array}{llll}
&T_{1} (S,u)(x)=\displaystyle \int _{0}^{1}\exp (
\frac{\min (x,t)-t}{d_{0}})f(1-S)u dt,
\\
&T_{2} (S,u)(x)=\displaystyle \int _{0}^{1}\exp (
\frac{\min (x,t)-t}{d_{1}})f(1-S)u dt.

\end{array}
\end{equation}
Indeed, thanks to (\ref{norm1}), we obtain, for all $(S,u)\in K,$ that\\
$$0\leq T_{1} (S,u)(x)\leq f(1)\displaystyle \int _{0}^{1}\exp (
\frac{\min (x,t)-t}{d_{0}})\phi_{d_{1}}dt\leq\phi_{d_{0}}(x),$$
which implies $0\leq T_{1} (S,u)\leq \phi_{d_{0}}.$
Furthermore, since we have $\|\phi_{d_{0}}\|<k<1$, we obtain $$T_{2} (S,u)(x)\geq \displaystyle\int _{0}^{1}\exp (
\frac{\min (x,t)-t}{d_{1}})f(1-k)cdt,$$ so we have $$ce^{\frac{-1}{d_{1}}}f(1-k)\leq T_{2} (S,u)(x)\leq f(1)\displaystyle \int _{0}^{1}\exp (
\frac{\min (x,t)-t}{d_{1}})\phi_{d_{1}}dt=f(1)\frac{\phi_{d_{1}}(x)}{\lambda_{d_{1}}}.$$
Consequently, $$c\leq T_{2} (S,u)(x)\leq \phi_{d_{1}}(x).$$
 Moreover, it follows from well-known arguments in \cite{A} that
$T$ is a compact continuous operator on $K$, and we have that $\mathfrak{T}(K)\subset K,$ so we deduce using Schauder fixed point theorem that there exists $(S,u)\in K$ satisfying that $\mathfrak{T}(S,u)=(S,u)$; that is $T_{1} (S,u)=S$ and $T_{2} (S,u)=u$. And since $\alpha(\cdot, 0)=0$, this implies $(S,u,0)$ is a non trivial solution of system (\ref{SS1})-(\ref{SS1b}).\\

(Extinction of isolated bacteria)
Let $\phi _{d_{2}}>0$ be the principal eigenfunction of the Sturm--Liouville
problem $(P_{\lambda })$ corresponding to the eigenvalue $-\lambda _{d_{2}}$. Since, $g(1)> e^{\frac{1}{d_{1}}},$ there exist $0<k<1$, satisfying $g(1-k)> e^{\frac{1}{d_{2}}}$. We normalize $\phi _{d_{0}}$ by requiring that
$\phi _{d_{0}}<k$, and normalize $\phi _{d_{2}}$
by requiring
%
\begin{equation}
\label{norm2}
 \phi _{d_{2}}(x)\leq\frac{\lambda_{d_{0}}}{g(1)} \phi _{d_{0}}(x).
\,\,0\leq x\leq 1.
\end{equation}
 Let $C=\min_{[0,1]}\phi _{d_{2}}$ and define the cone $\Lambda$ by $$\Lambda:=\{(S,u)\in (C([0,1],\mathbb{R}))^{2};0\leq S\leq \phi _{d_{0}},\,C\leq v\leq \phi _{d_{2}}\}$$ and the operator $F=(F_{1},F_{2})$ on $\Lambda$ by \begin{equation}
\label{eqp110}
\begin{array}{llll}
&F_{1} (S,v)(x)=\displaystyle \int _{0}^{1}\exp (
\frac{\min (x,t)-t}{d_{0}})g(1-S)vdt,
\\
&F_{2} (S,v)(x)=\displaystyle \int _{0}^{1}\exp (
\frac{\min (x,t)-t}{d_{2}})g(1-S)vdt.

\end{array}
\end{equation}
Indeed, thanks to (\ref{norm2}), we obtain, for all $(S,v)\in \Lambda,$ that\\
$$0\leq F_{1} (S,v)(x)\leq g(1)\displaystyle \int _{0}^{1}\exp (
\frac{\min (x,t)-t}{d_{0}})\phi_{d_{2}}dt\leq\phi_{d_{0}}(x),$$
and this implies $0\leq F_{1} (S,v)\leq \phi_{d_{0}}.$
Furthermore, we have $$e^{\frac{-1}{d_{2}}}g(1-k)C\leq F_{2} (S,v)(x)\leq g(1)\displaystyle \int _{0}^{1}\exp (
\frac{\min (x,t)-t}{d_{2}})\phi_{d_{2}}dt=g(1)\frac{\phi_{d_{2}}(x)}{\lambda_{d_{2}}}.$$
As a result, $$C\leq F_{2} (S,v)(x)\leq \phi_{d_{2}}(x).$$
 Moreover, it follows from well-known arguments that
$F$ is a compact continuous operator on $\Lambda$, and we have that $F(\Lambda)\subset \Lambda$. So we deduce using Schauder fixed point theorem that there exists $(S,v)\in \Lambda$ satisfying that $F(S,v)=(S,v)$. That is, $F_{1} (S,u)=S$ and $F_{2} (S,v)=v$. And since $\beta(0,\cdot)=0$, this implies $(S,0,v)$ is a non trivial solution of system (\ref{SS1})-(\ref{SS1b}).\\

\end{proof}
\begin{remark}
  If we integrate the first equation of (\ref{SS1}) on $[0,1]$ with $v=0,$ we obtain $$S(1)=\displaystyle\int_{0}^{1}f(1-S)u.$$
Since $\displaystyle T_{2} (S,u)(x)\geq e^{\frac{-1}{d_{1}}}\int_{0}^{1}f(1-S)u=e^{\frac{-1}{d_{1}}}S(1)$, this
 yields $S(1)=0,$ and then $u\equiv 0$ or $S(1)\neq 0,$ and consequently $u>0.$\\
\end{remark}

Let us define the cone $\Sigma$ by $$\Sigma:=\{(S,u,v)\in (C([0,1],\mathbb{R}))^{3};0\leq S\leq \phi _{d_{0}},\,\frac{\phi _{d_{1}}}{\lambda_{d_{1}}}\leq u\leq \phi _{d_{1}},\,\frac{\phi _{d_{2}}}{\lambda_{d_{2}}}\leq v\leq \phi _{d_{2}}\}.$$
Here $\phi _{d_{0}},\,\,\phi _{d_{1}}$ and $\phi _{d_{2}}$ are eigenfunctions
associated respectively with
$-\lambda _{d_{0}},\,\,-\lambda _{d_{1}}$ and $-\lambda _{d_{2}}$ satisfying some appropriate conditions.
 Choose $\theta>0$ small enough satisfying $\theta\phi_{d_{1}}\leq\phi_{d_{2}}$ and for all $(S,u,v)\in \Sigma,$ $\theta\leq\alpha(u,v)$ and choose $\rho>0$ small enough satisfying for all $(S,u,v)\in \Sigma,$ $\rho\leq\beta(u,v)$.
 Our second main result is the following:
\begin{theorem}
  (Coexistence of isolated and attached bacteria)\\
 Suppose $\alpha $ and $\beta $ are positive
continuous functions and nondecreasing in $(u,v)$, and assume $f,g$ are nondecreasing continuous functions satisfying $$ f(1)>\lambda_{d_{1}}(1+\frac{\alpha (1,1)}{y_{u}}),\,g(1)>\lambda_{d_{2}}(1+\frac{\beta (1,1)}{y_{v}})$$ and let $y_{u}$ and $y_{v}$ satisfy the assumptions
   \begin{equation}\label{A2}
           f(1)+\beta(1,1)\leq \lambda_{d_{1}}+\frac{\theta}{y_{u}\lambda_{d_{1}}}
        \end{equation}
        and
   \begin{equation}\label{A4}
           g(1)+\frac{\alpha(1,1)}{\theta}\leq \lambda_{d_{2}}+\frac{\rho}{y_{v}\lambda_{d_{2}}}.
        \end{equation}
 Then (\ref{SS1})-(\ref{SS1b}) has non trivial solution $(S,u,v)$ satisfying $S<1,\,u>0,\,\,v>0.$
\end{theorem}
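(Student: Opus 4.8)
The plan is to recast the steady-state system (\ref{SS1})--(\ref{SS1b}) as the fixed point equation $(S,u,v)=T(S,u,v)$ with $T=(T_1,T_2,T_3)$ the right-hand side of (\ref{eqp11}), and to apply the Schauder fixed point theorem on the cone $\Sigma$, which is closed, bounded and convex in $\big(C([0,1],\mathbb{R})\big)^3$. The engine is the same spectral identity used in the extinction theorem: writing $G_d(x,t)=\exp\!\big(\tfrac{\min(x,t)-t}{d}\big)$ for the kernel in (\ref{eqp11}), one has $\int_0^1 G_d(x,t)\phi_d(t)\,dt=\phi_d(x)/\lambda_d$, because $\phi_d$ solves $-d\phi_d''+\phi_d'=\lambda_d\phi_d$ with the boundary conditions of $(P_\lambda)$. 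First I would fix the normalizations of the principal eigenfunctions: scale $\phi_{d_1},\phi_{d_2}$ so that $\theta\phi_{d_1}\le\phi_{d_2}\le\phi_{d_1}\le1$ (making $\theta,\rho$ available and guaranteeing $\alpha(u,v)\le\alpha(1,1)$, $\beta(u,v)\le\beta(1,1)$ on $\Sigma$), and scale $\phi_{d_0}$ small enough that $1-\phi_{d_0}$ is close to $1$ while $f(1)\phi_{d_1}+g(1)\phi_{d_2}\le\lambda_{d_0}\phi_{d_0}$. Compactness and continuity of $T$ on $\Sigma$ follow from the well-known arguments in \cite{A} exactly as before, so everything reduces to the invariance $T(\Sigma)\subseteq\Sigma$.

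The core is the six pointwise inequalities defining $\Sigma$. For the substrate, monotonicity of $f,g$, the bounds $u\le\phi_{d_1}$, $v\le\phi_{d_2}$, and the spectral identity give $0\le T_1(S,u,v)\le\int_0^1 G_{d_0}\big(f(1)\phi_{d_1}+g(1)\phi_{d_2}\big)\,dt\le\lambda_{d_0}\int_0^1 G_{d_0}\phi_{d_0}=\phi_{d_0}$. For the upper bounds on $T_2$ and $T_3$ I would bound the production terms above by $f(1)\phi_{d_1}+\beta(1,1)\phi_{d_2}$ and $g(1)\phi_{d_2}+\alpha(1,1)\phi_{d_1}$ respectively, and bound the subtracted flocculation/deflocculation terms below using $\alpha\ge\theta$, $u\ge\phi_{d_1}/\lambda_{d_1}$ (resp. $\beta\ge\rho$, $v\ge\phi_{d_2}/\lambda_{d_2}$); after collapsing $\phi_{d_2}\le\phi_{d_1}$ (resp. $\alpha(1,1)\phi_{d_1}\le(\alpha(1,1)/\theta)\phi_{d_2}$), assumptions (\ref{A2}) and (\ref{A4}) are exactly what force the resulting bracketed constant to be $\le\lambda_{d_1}$ (resp. $\le\lambda_{d_2}$), so the spectral identity yields $T_2\le\phi_{d_1}$ and $T_3\le\phi_{d_2}$.

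The lower bounds are where the strict growth hypotheses on $f(1),g(1)$ enter, and this is the step I expect to be the main obstacle. In the integrand of $T_2$ one must bound the positive term $f(1-S)u$ below by $f(1-\phi_{d_0})\,\phi_{d_1}/\lambda_{d_1}$ (the smallest admissible $u$) and at the same time bound the subtracted term $\tfrac{1}{y_u}\alpha(u,v)u$ above by $\tfrac{1}{y_u}\alpha(1,1)\phi_{d_1}$ (the largest admissible $u$); using the two opposite extreme values of $u$ in the same estimate is the essential trick. This yields the lower bound $\phi_{d_1}\big(\tfrac{f(1-\phi_{d_0})}{\lambda_{d_1}}-\tfrac{\alpha(1,1)}{y_u}\big)$ for the integrand, and since $\phi_{d_0}$ was taken small, the strict hypothesis $f(1)>\lambda_{d_1}\big(1+\tfrac{\alpha(1,1)}{y_u}\big)$ forces (by continuity in the size of $\phi_{d_0}$) this bracket to be $\ge1$, so the integrand is $\ge\phi_{d_1}$ and $T_2\ge\int_0^1 G_{d_1}\phi_{d_1}=\phi_{d_1}/\lambda_{d_1}$. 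The estimate $T_3\ge\phi_{d_2}/\lambda_{d_2}$ is symmetric, driven by $g(1)>\lambda_{d_2}\big(1+\tfrac{\beta(1,1)}{y_v}\big)$. Once $T(\Sigma)\subseteq\Sigma$ is in hand, Schauder produces a fixed point $(S,u,v)\in\Sigma$; the lower cone constraints force $u\ge\phi_{d_1}/\lambda_{d_1}>0$ and $v\ge\phi_{d_2}/\lambda_{d_2}>0$, while $S\le\phi_{d_0}<1$, giving the desired nontrivial coexistence solution. The genuinely delicate point is arranging all normalizations — $\phi_{d_0}$ small yet dominating $f(1)\phi_{d_1}+g(1)\phi_{d_2}$, together with $\theta\phi_{d_1}\le\phi_{d_2}\le\phi_{d_1}\le1$ — to be simultaneously consistent with (\ref{A2})--(\ref{A4}).
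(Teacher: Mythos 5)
Your proposal is correct and takes essentially the same approach as the paper's proof: the same fixed point reformulation of (\ref{SS1})--(\ref{SS1b}) on the cone $\Sigma$, the same spectral identity $\int_0^1 \exp\!\big(\tfrac{\min(x,t)-t}{d}\big)\phi_d(t)\,dt=\phi_d(x)/\lambda_d$, the same eigenfunction normalizations, the same six invariance inequalities (with the strict hypotheses on $f(1),g(1)$ driving the lower bounds via the opposite-extremes trick, and (\ref{A2})--(\ref{A4}) driving the upper bounds), followed by Schauder. The only cosmetic difference is that you drop the nonnegative term $\beta(u,v)v$ in the lower bound for $T_2$ (and $\alpha(u,v)u$ for $T_3$), whereas the paper retains it as $\theta\rho/\lambda_{d_2}$ and then merely uses its positivity.
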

\begin{proof}
First, since $f(1)>\lambda_{d_{1}}(1+\frac{\alpha (1,1)}{y_{u}})$ and $g(1)>\lambda_{d_{2}}(1+\frac{\beta (1,1)}{y_{v}})$, there exists $0<k,k^{\prime}<1$, such that
$$\lambda_{d_{1}}(1+\frac{\alpha (1,1)}{y_{u}})\leq f(1-k),\,\lambda_{d_{2}}(1+\frac{\beta (1,1)}{y_{v}})\leq g(1-k^{\prime}).$$
Then, we normalize $\phi _{d_{0}}$, $\phi _{d_{1}}$ and $\phi _{d_{2}}$
by requiring that
 $\|\phi_{d_{0}}\|<\min(k,k^{\prime})<1$ and
%
\begin{equation}
\label{norm}
 \phi _{d_{2}}(x)\leq \phi _{d_{1}}(x)\leq \frac{\lambda_{d_{0}}}{f(1)+g(1)}\phi _{d_{0}}(x).
\,\,0\leq x\leq 1.
\end{equation}
Define the operator $G(S,u,v)=(G_{1}(S,u,v),G_{2}(S,u,v),G_{3}(S,u,v))$ on $\Sigma$ by \begin{equation}
\label{eqp110}
\begin{array}{llll}
&G_{1} (S,u,v)(x)=\displaystyle \int _{0}^{1}\exp (
\frac{\min (x,t)-t}{d_{0}})(f(1-S)u+g(1-S)v)dt,
\\
&G_{2} (S,u,v)(x)=\displaystyle \int _{0}^{1}\exp (
\frac{\min (x,t)-t}{d_{1}})[f(1-S)u+\beta (u,v)v-
\frac{1}{y_{u}}\alpha (u,v)u]dt,
\\
&G_{3} (S,u,v)(x)=\displaystyle \int _{0}^{1}\exp (
\frac{\min (x,t)-t}{d_{2}})[g(1-S)v+
\alpha (u,v)u-\frac{1}{y_{v}}\beta (u,v)v]dt.
\end{array}
%
\end{equation}
Then for all $(S,u,v)\in \Sigma,$ we have\\
$$0\leq G_{1} (S,u,v)(x)\leq \displaystyle \int _{0}^{1}\exp (
\frac{\min (x,t)-t}{d_{0}})(f(1)\phi_{d_{1}}+g(1)\phi_{d_{2}})dt.$$
Also, thanks to (\ref{norm}), we have $$0\leq G_{1} (S,u,v)(x)\leq \displaystyle \int _{0}^{1}\exp (
\frac{\min (x,t)-t}{d_{0}})(f(1)+g(1))\phi_{d_{1}}dt.$$
So, we obtain $$0\leq G_{1} (S,u,v)(x)\leq \displaystyle \int _{0}^{1}\exp (
\frac{\min (x,t)-t}{d_{0}})\lambda_{d_{0}}\phi_{d_{0}}dt,$$
and this implies $0\leq G_{1} (S,u,v)\leq \phi_{d_{0}}.$\\
Next, since $\|\phi_{d_{0}}\|<k<1$, we see that for all $(S,u,v)\in \Sigma$ we have
$$G_{2} (S,u,v)(x)\geq \displaystyle\int _{0}^{1}
\exp (
\frac{\min (x,t)-t}{d_{1}})(f(1-k)\frac{\phi_{d_{1}}}{\lambda_{d_{1}}}-\frac{1}{y_{u}}\alpha(u,v)\phi_{d_{1}}+\beta (u,v)\frac{\phi_{d_{2}}}{\lambda_{d_{2}}})dt.$$
So using that $\theta\phi_{d_{1}}\leq\phi_{d_{2}}$, then by the monotonicity of $\alpha(\dot),$
 we have $$G_{2} (S,u,v)(x)\geq  \displaystyle\int _{0}^{1}\exp (
\frac{\min (x,t)-t}{d_{1}})(\frac{\phi_{d_{1}}}{\lambda_{d_{1}}}f(1-k)-\frac{1}{y_{u}}\alpha(1,1)\phi_{d_{1}}+\beta (u,v)\frac{\theta\phi_{d_{1}}}{\lambda_{d_{2}}})dt.$$
Then using that $\rho\leq\beta(u,v)$ for all $(S,u,v)\in \Sigma,$ we obtain $$G_{2} (S,u,v)(x)\geq  \displaystyle (f(1-k)\frac{1}{\lambda_{d_{1}}}-\frac{1}{y_{u}}\alpha(1,1)+\theta\rho \frac{1}{\lambda_{d_{2}}})) \displaystyle\int _{0}^{1}\exp (
\frac{\min (x,t)-t}{d_{1}})\phi_{d_{1}}.$$
Consequently, we have $$G_{2} (S,u,v)(x)\geq  \displaystyle (f(1-k)\frac{1}{\lambda_{d_{1}}}-\frac{1}{y_{u}}\alpha(1,1)+\theta\rho \frac{1}{\lambda_{d_{2}}})) \frac{\phi_{d_{1}}}{\lambda_{d_{1}}}.$$
Since, $ \frac{\theta\rho}{\lambda_{d_{2}}}>0$, we have $1+\frac{\alpha(1,1)}{y_{u}}\leq \frac{\theta\rho}{\lambda_{d_{2}}}+\frac{f(1-k)}{\lambda_{d_{1}}}$, and we obtain that $$G_{2} (S,u,v)(x)\geq \frac{\phi_{d_{1}}}{\lambda_{d_{1}}}.$$
Using again (\ref{norm}), we get for all $(S,u,v)\in \Sigma,$
$$G_{2} (S,u,v)(x)\leq \displaystyle\int _{0}^{1}
\exp (
\frac{\min (x,t)-t}{d_{1}})(f(1)\phi_{d_{1}}-\frac{1}{y_{u}}\theta\frac{\phi_{d_{1}}}{\lambda_{d_{1}}}+\beta (1,1)\phi_{d_{1}}dt.$$
So, we have $$G_{2} (S,u,v)(x)\leq \displaystyle(f(1)\frac{1}{\lambda_{d_{1}}}-\frac{1}{y_{u}}\theta\frac{1}{\lambda^{2}_{d_{1}}}+\beta (1,1)\frac{1}{\lambda_{d_{1}}})\phi_{d_{1}}(x).$$
Using, the assumption (\ref{A2}), we conclude $G_{2} (S,u,v)(x)\leq \displaystyle\phi_{d_{1}}(x).$

Next, since
$\phi _{d_{0}}<\min(k,k^{\prime})$, we find that for all $(S,u,v)\in \Sigma,$
$$G_{3} (S,u,v)(x)\geq \displaystyle\int _{0}^{1}
\exp (
\frac{\min (x,t)-t}{d_{2}})(g(1-k^{\prime})\frac{\phi_{d_{2}}}{\lambda_{d_{2}}}-\frac{1}{y_{v}}\beta(u,v)\phi_{d_{2}}+\alpha (u,v)\frac{\phi_{d_{1}}}{\lambda_{d_{1}}})dt.$$
So using again (\ref{norm}), and the monotonicity of $\beta(\cdot)$, we have $$G_{3} (S,u,v)(x)\geq  \displaystyle\int _{0}^{1}\exp (
\frac{\min (x,t)-t}{d_{2}})(\frac{\phi_{d_{2}}}{\lambda_{d_{2}}}g(1-k^{\prime})-\frac{1}{y_{v}}\beta(1,1)\phi_{d_{2}}+\alpha (u,v)\frac{\phi_{d_{2}}}{\lambda_{d_{1}}})dt.$$
Then $$G_{3} (S,u,v)(x)\geq  \displaystyle (g(1-k^{\prime})\frac{1}{\lambda_{d_{2}}}-\frac{1}{y_{v}}\beta(1,1)+\theta \frac{1}{\lambda_{d_{1}}}))\frac{\phi_{d_{2}}}{\lambda_{d_{2}}}.$$
Since, we have $1+\frac{\beta(1,1)}{y_{v}}\leq \frac{\theta}{\lambda_{d_{1}}}+\frac{g(1-k^{\prime})}{\lambda_{d_{2}}}$, we obtain that $$G_{3} (S,u,v)(x)\geq\displaystyle \frac{\phi_{d_{2}}}{\lambda_{d_{2}}}.$$
Using again the monotonicity of $\alpha(\cdot)$, we get for all $(S,u,v)\in \Sigma,$
$$G_{3} (S,u,v)(x)\leq \displaystyle\int _{0}^{1}
\exp (
\frac{\min (x,t)-t}{d_{2}})(g(1)\phi_{d_{2}}-\frac{1}{y_{v}}\rho\frac{\phi_{d_{2}}}{\lambda_{d_{2}}}+\alpha (1,1)\phi_{d_{1}}dt.$$
So, we have $$G_{3} (S,u,v)(x)\leq \displaystyle(g(1)\frac{1}{\lambda_{d_{2}}}-\frac{1}{y_{v}}\rho\frac{1}{\lambda^{2}_{d_{2}}}+\alpha (1,1)\frac{1}{\lambda_{d_{2}}\theta})\phi_{d_{2}}(x).$$
Using, the assumption (\ref{A4}) leads to $G_{3} (S,u,v)(x)\leq \displaystyle\phi_{d_{2}}(x).$

 Then, from well-known arguments in \cite{A},
$G$ is a compact continuous operator on $\Sigma$, and we have that $G(\Sigma)\subset \Sigma.$ So we deduce using the Schauder fixed point theorem that there exists $(S,u,v)\in \Sigma$ satisfying $T(S,u,v)=(S,u,v)$. That is, $G_{1} (S,u,v)=S$ and $G_{2} (S,u,v)=u$ and $G_{3} (S,u,v)=v$. This implies $(S,u,v)$ is a non trivial solution of system (\ref{SS1})-(\ref{SS1b}).\\

\end{proof}

\section{Numerical simulations}\label{s6} In this section, we illustrate the effect of the coefficients of diffusion on the coexistence
of microbial species as well as their competitive behaviour through numerical
simulations. Let us consider the flocculation-deflocculation rates defined by:

$$\alpha(u,v) =(u
+v)v,\,\,\beta(u,v) =(1+v)(u+v),\,y_{u}=y_{v}=10^{-1},$$
  with initial conditions
\begin{equation*}
S(x,0)=0.1,\,\, u(x,0)=1,\,\,v(x,0)=1,\,\,x\in[0,1].
\end{equation*}

 Let us recall the stability result of the washout steady state $E_{0}=(1,0,0)$ established in \cite{ZA},
\begin{theorem}(\cite{ZA})
\label{swss}
Assume that the functions $f,g $ are positive and continuously differentiable such that
$f (0)=g(0)=0$ and
$\alpha ,\beta $ are positive and continuously differentiable such that
$\alpha (0,0)=0$ or $\beta (0,0)=0$. Then, if the first eigenvalues
$-\lambda _{d_{1}},\,\,-\lambda _{d_{2}}$ associating
$(P_{\lambda })$ with $d=d_{1}$, $d=d_{2},$ respectively, satisfy
\begin{equation*}
\lambda _{d_{1}}>f(1)-\frac{1}{y_{u}}\alpha (0,0)\text{ and }\lambda _{d_{2}}>g(1)-
\frac{1}{y_{v}}\beta (0,0),
\end{equation*}
then the washout steady state $E_{0}$ is uniformly asymptotically stable.%

If
\begin{equation*}
\lambda _{d_{1}}<f(1)-\frac{1}{y_{u}}\alpha (0,0)\text{ or }\lambda _{d_{2}}<g(1)-
\frac{1}{y_{v}}\beta (0,0),
\end{equation*}
then the washout steady state is unstable.
\end{theorem}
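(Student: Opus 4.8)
The statement is a classical linearized-stability result, so the plan is to analyze the spectrum of the linearization of (\ref{Sa1})--(\ref{Sa2}) about the washout state $E_0=(1,0,0)$ and then invoke the principle of linearized stability for semilinear parabolic systems. First I would pass to homogeneous boundary conditions by writing $\sigma=S-1$, $\mu=u$, $\nu=v$; since $-d_0S_x(0)+S(0)=1$ becomes $-d_0\sigma_x(0)+\sigma(0)=0$, the perturbation $(\sigma,\mu,\nu)$ satisfies exactly the $(P_\lambda)$-type boundary conditions with $d=d_0,d_1,d_2$. Using $C^1$ regularity of $f,g,\alpha,\beta$, together with $f(1+\sigma)\mu=f(1)\mu+o(\cdot)$ and $\partial_u[\alpha(u,v)u]|_{(0,0)}=\alpha(0,0)$, $\partial_v[\alpha(u,v)u]|_{(0,0)}=0$ (and symmetrically for $\beta$), the linearized operator $\mathcal{A}$ acts as
\begin{equation*}
\mathcal{A}\begin{pmatrix}\sigma\\ \mu\\ \nu\end{pmatrix}
=\begin{pmatrix}
d_0\sigma_{xx}-\sigma_x-f(1)\mu-g(1)\nu\\
d_1\mu_{xx}-\mu_x+\left(f(1)-\tfrac{1}{y_u}\alpha(0,0)\right)\mu+\beta(0,0)\nu\\
d_2\nu_{xx}-\nu_x+\alpha(0,0)\mu+\left(g(1)-\tfrac{1}{y_v}\beta(0,0)\right)\nu
\end{pmatrix},
\end{equation*}
on the domain cut out by the homogeneous boundary conditions.

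The crucial structural observation is that $\mathcal{A}$ is block triangular: the $(\mu,\nu)$ rows form a closed subsystem that does not involve $\sigma$, while $\sigma$ is only driven by $\mu,\nu$. Hence, since each diagonal block has compact resolvent, $\operatorname{spec}(\mathcal{A})=\operatorname{spec}(L_{d_0})\cup\operatorname{spec}(M)$, where $L_{d_0}=d_0\partial_{xx}-\partial_x$ is precisely the operator in $(P_\lambda)$ with $d=d_0$, whose rightmost eigenvalue is $-\lambda_{d_0}<0$, and $M$ is the $2\times2$ reaction-diffusion block acting on $(\mu,\nu)$. Thus the $\sigma$-direction contributes only stable modes, and the whole question reduces to locating the rightmost eigenvalue of $M$.

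For $M$ I would exploit the hypothesis $\alpha(0,0)=0$ or $\beta(0,0)=0$: in either case one off-diagonal coupling vanishes and $M$ becomes triangular, so $\operatorname{spec}(M)$ is the union of the spectra of the scalar operators $d_1\partial_{xx}-\partial_x+\big(f(1)-\tfrac{1}{y_u}\alpha(0,0)\big)$ and $d_2\partial_{xx}-\partial_x+\big(g(1)-\tfrac{1}{y_v}\beta(0,0)\big)$. Each is a constant shift of a $(P_\lambda)$-operator, so its rightmost eigenvalue is $-\lambda_{d_1}+f(1)-\tfrac{1}{y_u}\alpha(0,0)$, respectively $-\lambda_{d_2}+g(1)-\tfrac{1}{y_v}\beta(0,0)$, with all other eigenvalues strictly more negative. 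Consequently every eigenvalue of $\mathcal{A}$ lies in the open left half-plane exactly when $\lambda_{d_1}>f(1)-\tfrac{1}{y_u}\alpha(0,0)$ and $\lambda_{d_2}>g(1)-\tfrac{1}{y_v}\beta(0,0)$, giving uniform asymptotic stability; if either strict inequality is reversed, a rightmost eigenvalue becomes positive, producing an exponentially growing linear mode and hence instability of $E_0$.

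I expect the spectral bookkeeping to be the main obstacle, with two technical points requiring care. First, the advection terms make $L_{d_0}$ and the diagonal blocks of $M$ non-self-adjoint; I would symmetrize each operator $d\partial_{xx}-\partial_x$ in the weighted space $L^2\big((0,1),e^{-x/d}dx\big)$, where it takes Sturm--Liouville form $\tfrac{1}{w}(dw\psi')'$ with $w=e^{-x/d}$, so that its principal eigenvalue $-\lambda_d$ is real, simple, and isolated with a positive eigenfunction $\phi_d$, exactly as used in $(P_\lambda)$. Second, transferring the linear spectral picture to genuine nonlinear (in)stability requires the analytic-semigroup framework for this system with mixed Robin--Neumann conditions: one verifies that $\mathcal{A}$ generates an analytic semigroup, that the $C^1$ nonlinearity controls the remainder near $E_0$, and that the spectral bound governs the growth bound, after which the standard Lyapunov argument (stable case) and the unstable-manifold construction (unstable case) close the proof. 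This is precisely the route carried out in \cite{ZA} for the single-species model.
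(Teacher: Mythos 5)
This theorem is not proved in the paper at all: it is quoted from \cite{ZA} (hence the citation in the theorem header) in Section 6, purely as background for interpreting the numerical simulations. So there is no in-paper proof to compare your argument against; the only meaningful comparison is with the original proof in \cite{ZA}, which, as your closing sentence notes, is indeed a linearization/spectral argument for the single-species system (\ref{Sa1})--(\ref{Sa3}).

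Assessed on its own merits, your outline is correct and is the natural route. The linearization is computed correctly (in particular $\partial_u[\alpha(u,v)u]|_{(0,0)}=\alpha(0,0)$ and $\partial_v[\alpha(u,v)u]|_{(0,0)}=0$, and symmetrically for $\beta$), the shift $\sigma=S-1$ does produce exactly the homogeneous $(P_{\lambda})$ boundary conditions, and the two triangularity observations do all the work: $\sigma$ is driven by $(\mu,\nu)$ without feedback, and the hypothesis $\alpha(0,0)=0$ or $\beta(0,0)=0$ annihilates one off-diagonal entry of the $(\mu,\nu)$ block. This reduces the spectrum of the linearization to that of three shifted scalar $(P_{\lambda})$ operators with rightmost eigenvalues $-\lambda_{d_0}$, $-\lambda_{d_1}+f(1)-\frac{1}{y_u}\alpha(0,0)$ and $-\lambda_{d_2}+g(1)-\frac{1}{y_v}\beta(0,0)$, which is precisely the dichotomy in the statement. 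Your weighted symmetrization $w=e^{-x/d}$ is the right way to see that these spectra are real with a simple principal eigenvalue and positive eigenfunction, consistent with the properties of $(P_{\lambda})$ listed in Section 5 of the paper. Two points deserve explicit verification rather than assertion if this were written out in full: (i) the identity $\mathrm{spec}(\mathcal{A})=\mathrm{spec}(L_{d_0})\cup\mathrm{spec}(M)$ for the block-triangular operator matrix, which holds here because the coupling is a bounded multiplication operator and the diagonal blocks have compact resolvent, so one can build the resolvent explicitly and produce eigenfunctions by solving $(L_{d_0}-\lambda)\sigma=-C(\mu,\nu)$; and (ii) the passage from linear instability to nonlinear instability of $E_0$, which requires the standard unstable-manifold (or Henry-type) theorem in an analytic-semigroup setting rather than just the existence of a growing linear mode. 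Both are standard, and neither is a gap in the sense of a wrong idea.
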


Indeed, the biological meaning of $\frac{1}{\lambda_{d}}$ is interpreted as the mean residence time of microbe in the reactor (see \cite{BL}), and the function $d\mapsto \lambda_{d}$ is strictly decreasing in $d\in(0,+\infty)$ and satisfies $$\displaystyle\lim_{d\rightarrow 0^{+}}\lambda_{d}=+\infty,$$  $$\displaystyle\lim_{d\rightarrow +\infty}\lambda_{d}=1,$$ and if $d<\frac{1}{2\pi}$ then $$\frac{1}{4d}+\pi^{2}\frac{d}{4}<\lambda_{d}<\frac{1}{4d}+\pi^{2}d.$$
Let us define the reproductive number of isolated and attached bacteria in the flocculation model by $$R_{u}=(f(\gamma_{S})-\frac{1}{y_{u}}\alpha (0,0))\lambda_{d_{1}}^{-1},$$
$$R_{v}=(g(\gamma_{S})-\frac{1}{y_{u}}\beta (0,0))\lambda_{d_{2}}^{-1},$$
as a product of the net growth rate per unit biomass and the mean residence time of a microbe with random motility $d$.\\
Our simulations convince us that $R_{u}<1$ and $R_{v}<1$ lead to the washout and $R_{u}>1$ or $R_{v}>1$ leads to a persistence of microorganisms. While, coexistence can occurs if $R_{u}>1$ and $R_{v}>1$. This leads to the conclusion that all solutions approach some steady state solution and the outcome of competition between isolated and attached bacteria can depend rather subtly on their respective random motility coefficients and their intrinsic growth rate at the nutrient concentration.\\

Indeed, in fig1, let $d_{0}=1,d_{1}=1,d_{2}=10$ and consider the per-capita growth rate of the isolated bacteria of Haldeine type function, $f(S)=\displaystyle\frac{3S}{1+S+S^{2}}$. Then, fig1 illustrates the convergence of the solution towards the
coexisting steady state.
 Whereas, if $f$ is the Monod type function, $f(S)=\displaystyle\frac{4S}{1+S}$, with the same random motility coefficients, we observe in fig2 that the solution converges towards the steady state of
extinction of the attached bacteria and the dominance of isolated bacteria in the medium and so the population with the smaller motility coefficient competitively can
exclude its more motile rival. This leads to the conclusion that the less motile and slower
growing organism can competitively exclude a more motile and faster growing competitor. Perhaps this is due to the fact that the rate of deflocculation exceeds the flocculation rate. But, bifurcation with respect to $d_{1}$ can occur, see fig3, which illustrates the convergence of the solution towards the steady state of extinction of isolated bacteria when $d_{0}=1$, $d_{1}=0.1 $ and $d_{2}=10$.\\

In fig3, we show that coexistence at equilibrium can occur even when one species has a higher intrinsic growth rate at the nutrient concentration, provided that the same species is sufficiently more motile than its rival. The results of numerical simulations in fig4 and fig5 illustrate
the convergence of the solution towards the washout or the coexisting steady
state, or to a semi-trivial steady-state solution corresponding to the extinction
of one of the two species.
In fig4, (d) and (e) show the convergence towards
the washout steady state for a small value of $d_{1}$ and $d_{2}$, whereas (f), (g), (h) and (i) illustrate the bifurcation from the constant state to a spatially dependent state with respect to the value of $d_{1},d_{2}$. In fig5, we observe that a more motile but slower growing organism can competitively exclude a less
motile but faster growing competitor.


In conclusion, the monotonicity properties of $\lambda_{d}$, is crucial to understand the effects of parameter
variation on the survival of a bacterial population, and, the phenomenon of coexistence or exclusion depend not only on the value of the couple $(d_{1},d_{2})$ but also on the growth rates and on the flocculation-deflocculation rates.
Indeed, consider the flocculation-deflocculation rates defined by:

$$\alpha(u,v) =(u
+v),\,\,\beta(u,v) =1,\,y_{u}=y_{v}=10^{-1},$$
  with initial conditions
\begin{equation*}
S(x,0)=0.1,\,\, u(x,0)=1,\,\,v(x,0)=1,\,\,x\in[0,1].
\end{equation*}
Here we describe
the results of a simulation, in fig6, in which the two organisms have Monod
uptake functions and the flocculation rate exceeds the deflocculation rate. We observe that the solution converges towards the steady state of
extinction of the attached bacteria. And so, the population with the smaller motility coefficient can
competitively exclude its more motile rival.

\begin{figure}[!hbtp]

\centering
\includegraphics[scale=0.5]{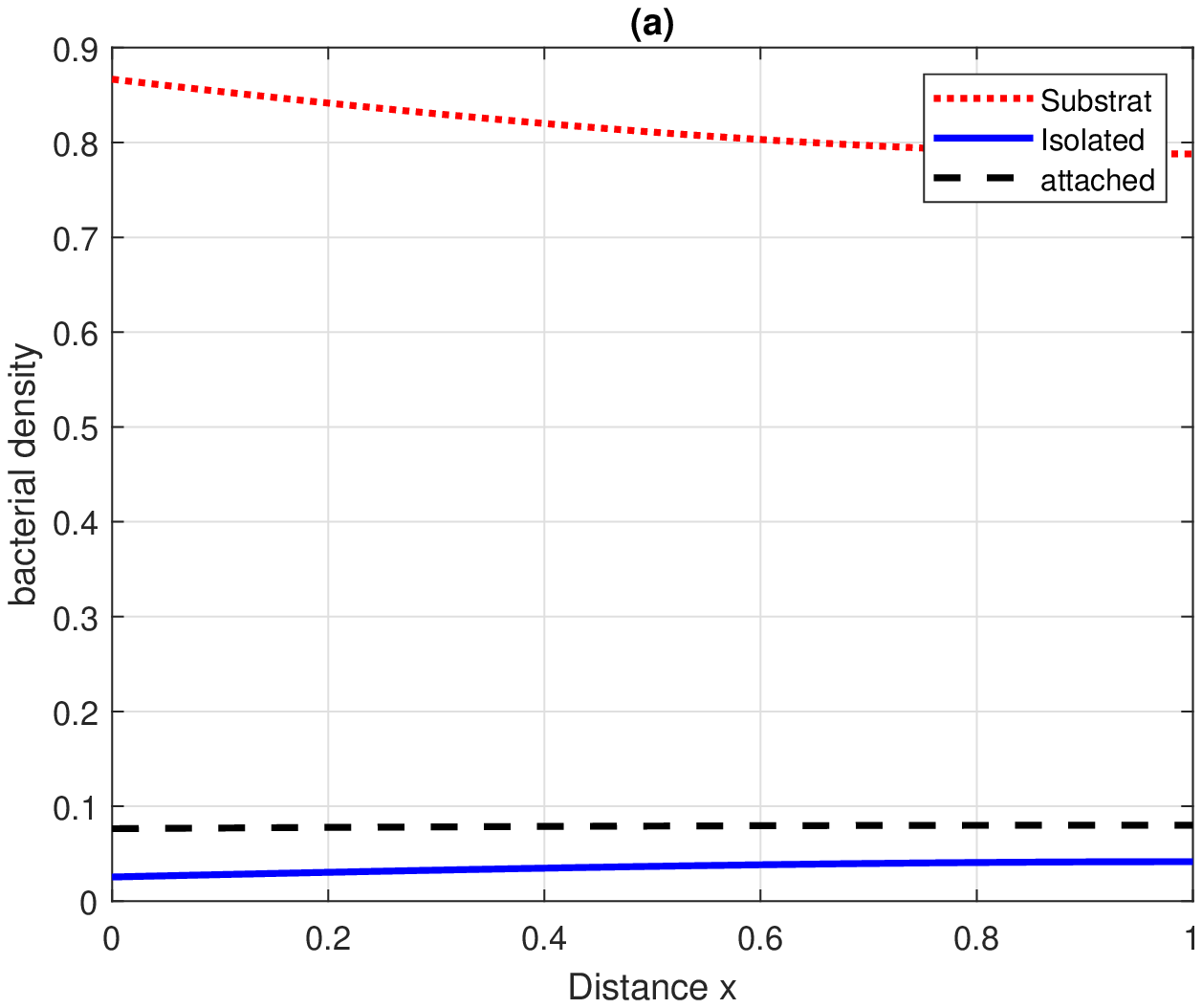}
\includegraphics[scale=0.5]{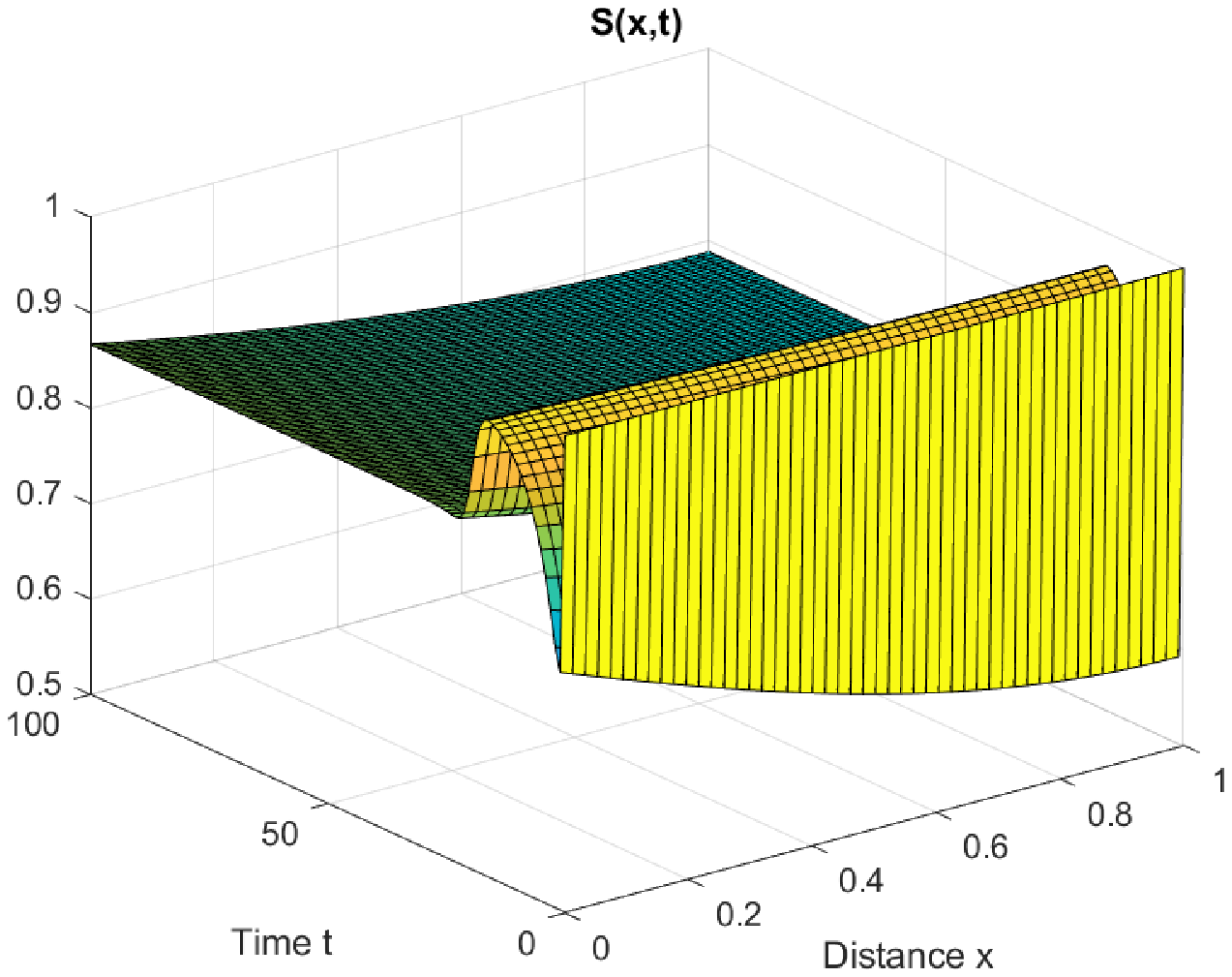}
\includegraphics[scale=0.5]{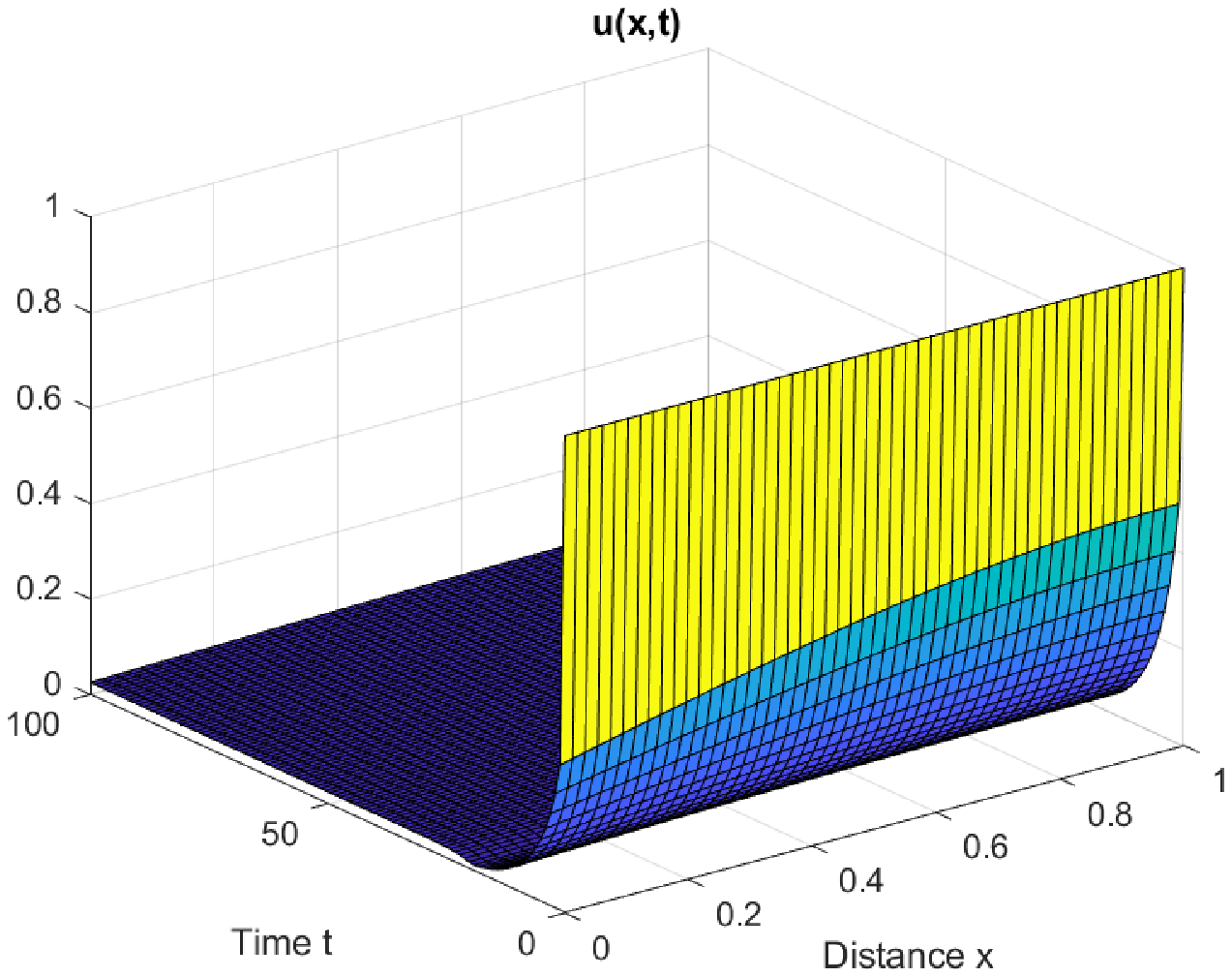}
\includegraphics[scale=0.5]{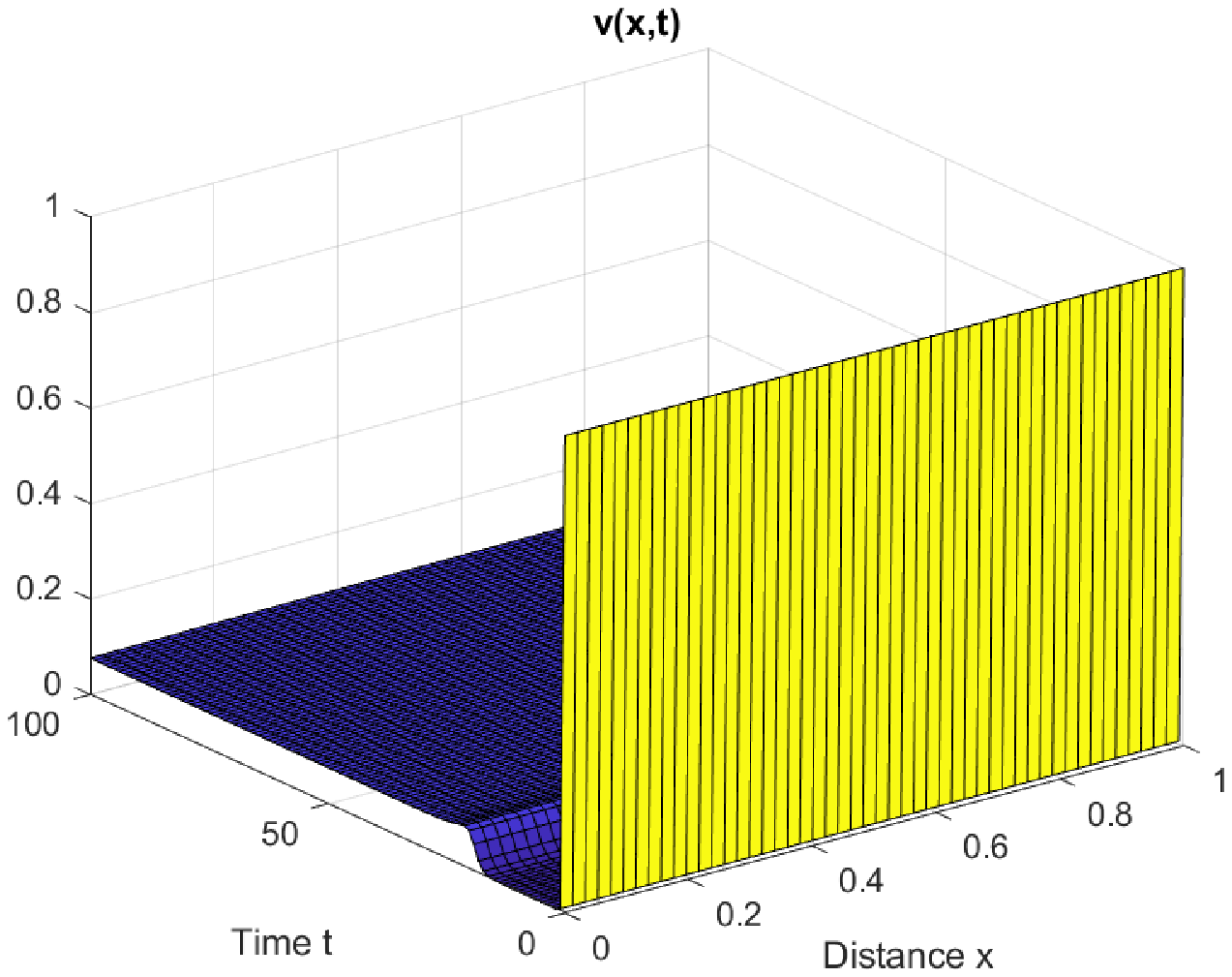}

\caption{The solution $(S, u, v)$ of system (2.1)-(2.3) as a function of two variables $x$ and $t$, in the case $(S_{0},u_{0},v_{0})=(1,0.1,0.1)$ $f(S)=\displaystyle\frac{3S}{1+S+S^{2}},\,\,g(S)=\displaystyle\frac{5S}{1+S},\,\alpha(u,v) =(u
+v)v,\,\,\beta(u,v) =(1+v)(u+v),$ $d_{0}=1,d_{1}=1,d_{2}=10$. Convergence towards the coexistence.}
\end{figure}
\begin{figure}[!hbtp]

\centering
\includegraphics[scale=0.5]{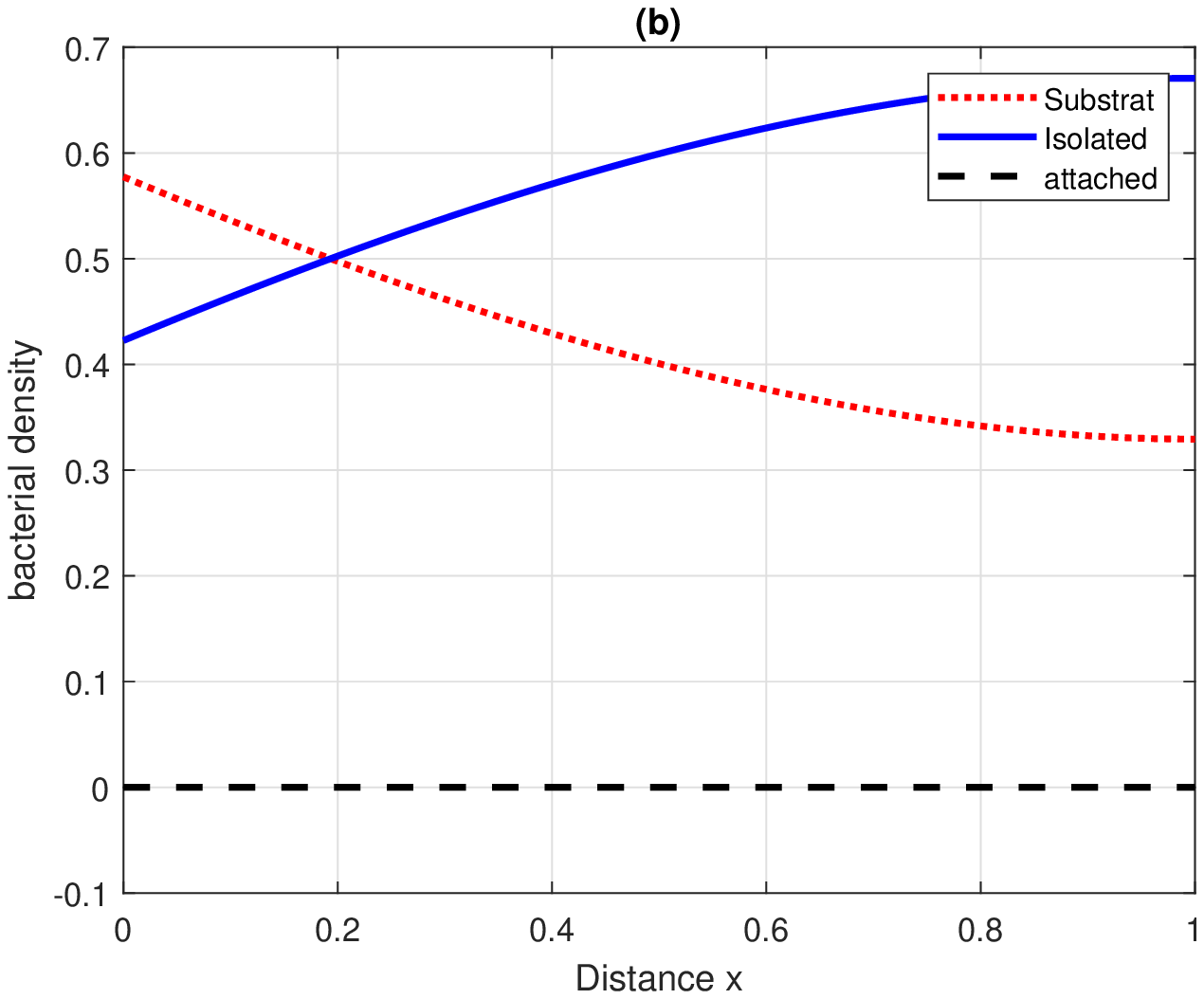}
\includegraphics[scale=0.5]{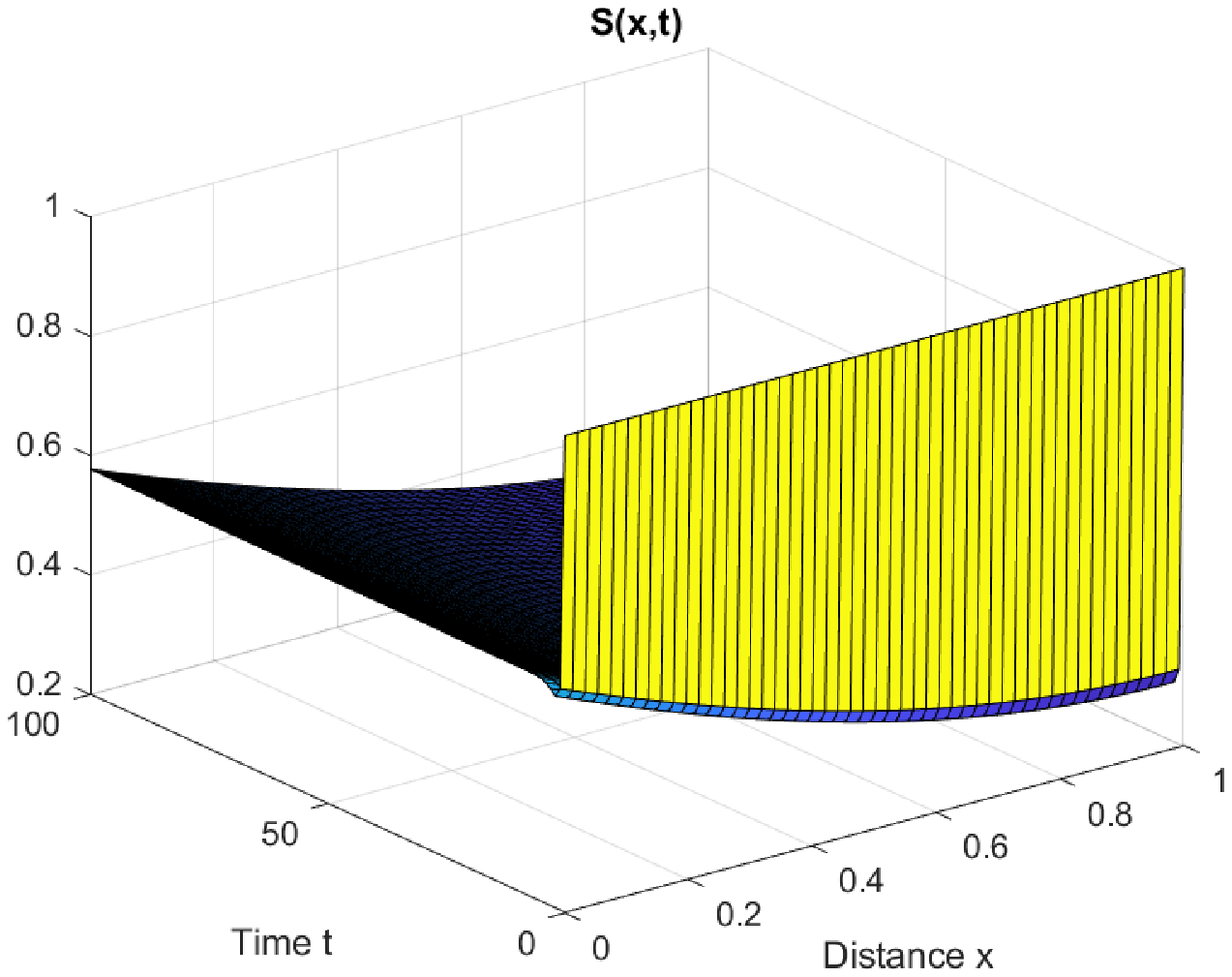}
\includegraphics[scale=0.5]{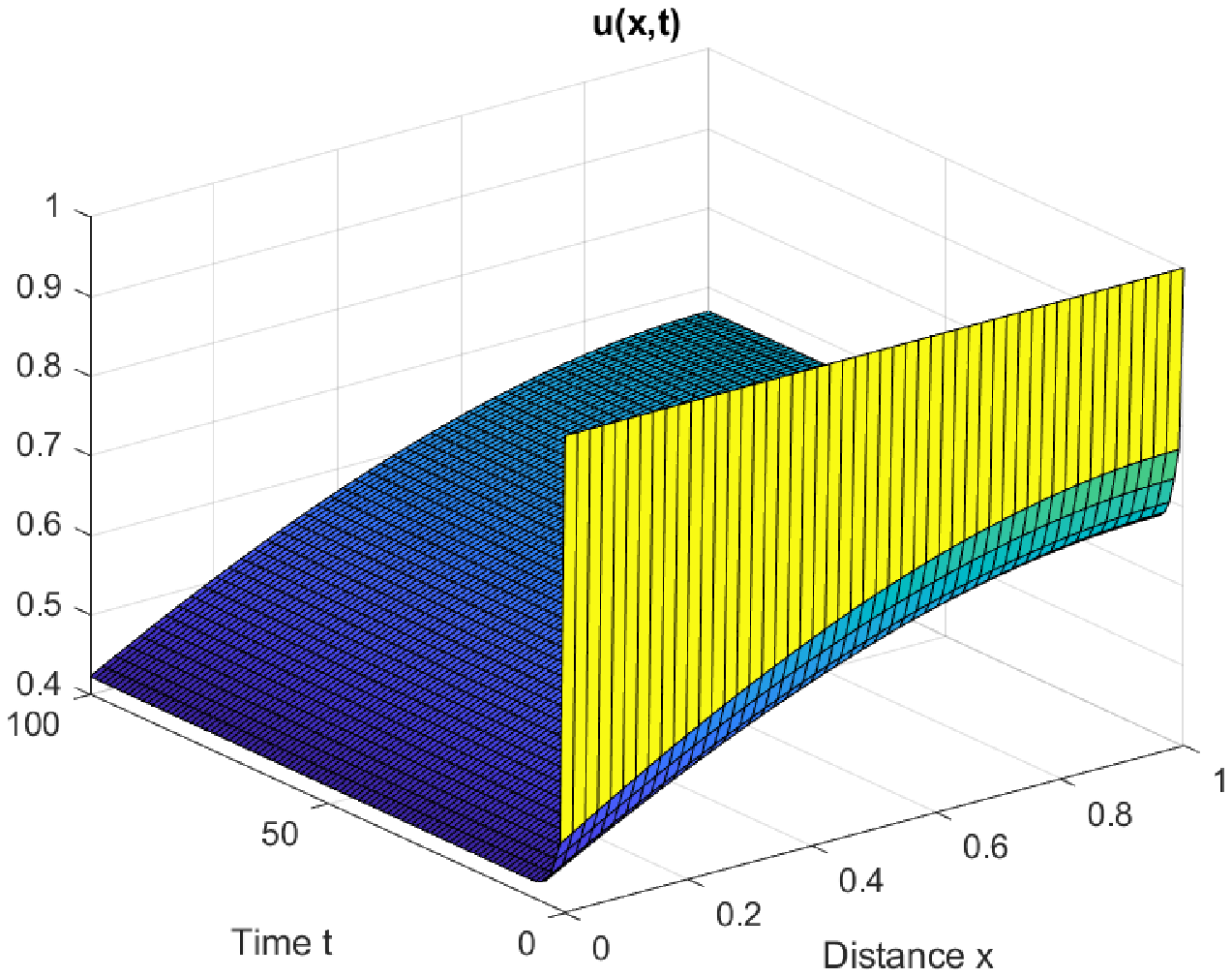}
\includegraphics[scale=0.5]{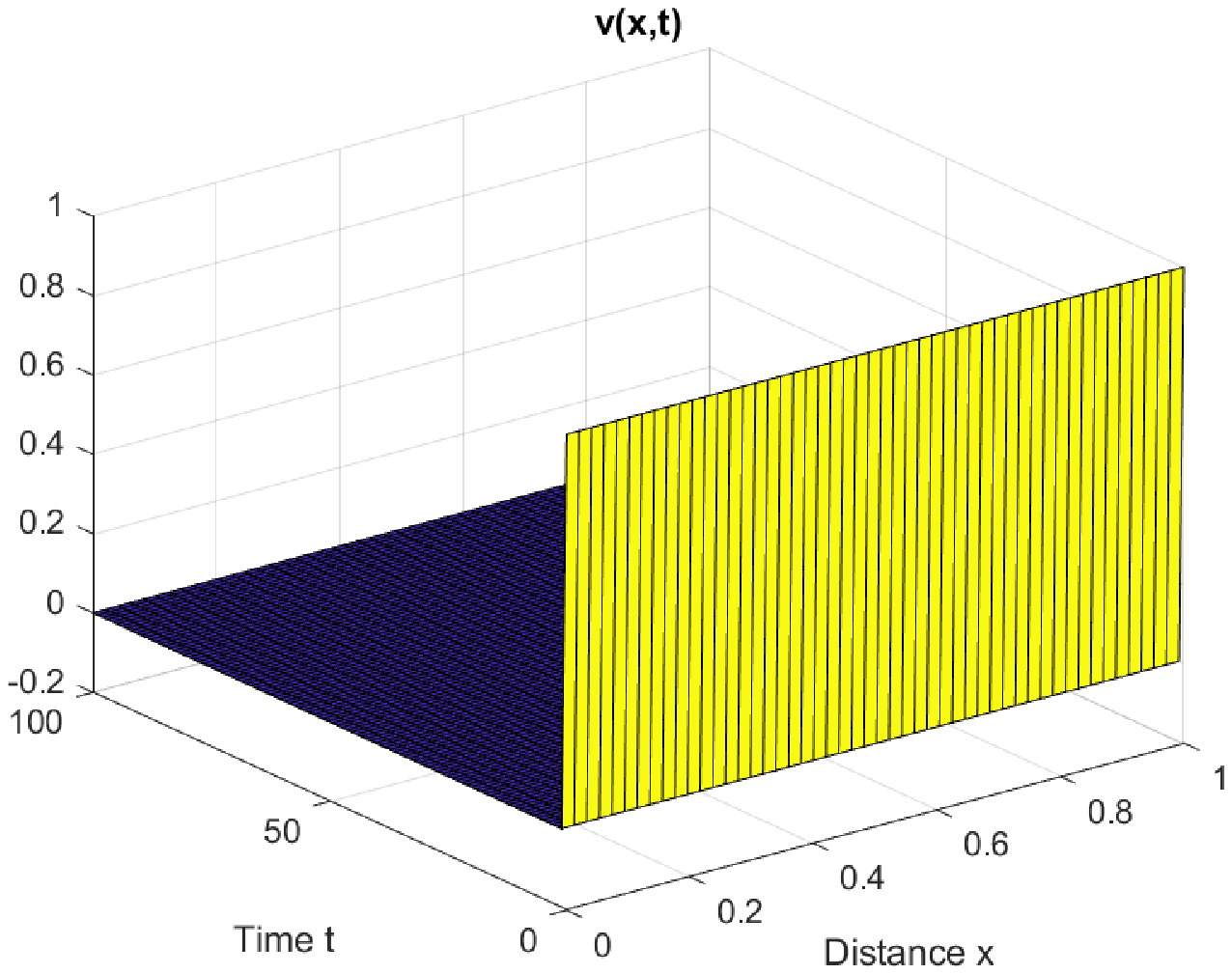}

\caption{The solution $(S, u, v)$ of system (2.1)-(2.3) as a function of two variables $x$ and $t$, in the case $(S_{0},u_{0},v_{0})=(1,0.1,0.1)$ $f(S)=\displaystyle\frac{4S}{1+S},\,\,g(S)=\displaystyle\frac{5S}{1+S},\,\alpha(u,v) =(u
+v)v,\,\,\beta(u,v) =(1+v)(u+v),$ $d_{0}=1,d_{1}=1,d_{2}=10$. Convergence towards the extinction of attached bacteria.}
\end{figure}
\begin{figure}[!hbtp]

\centering
\includegraphics[scale=0.5]{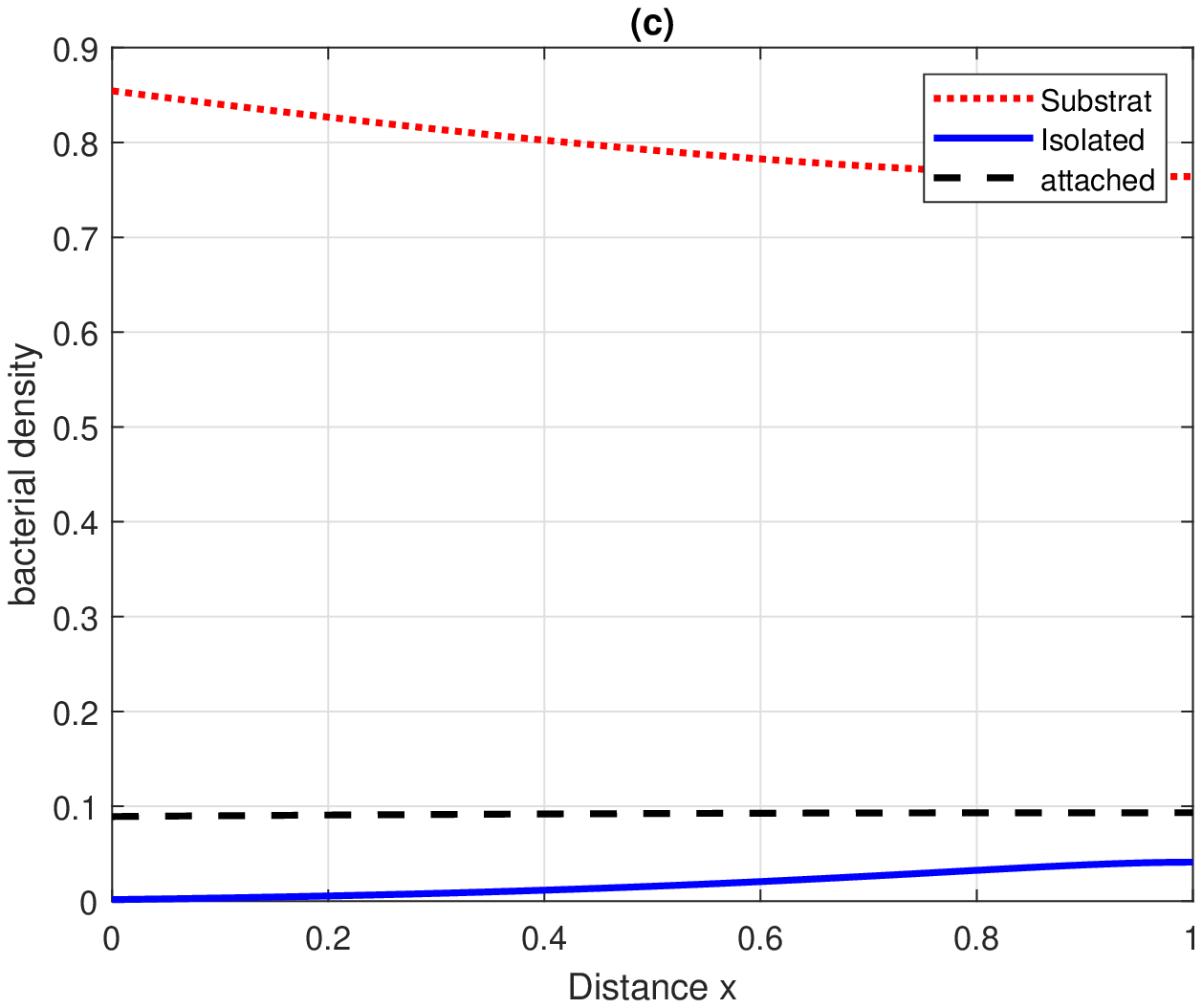}
\includegraphics[scale=0.5]{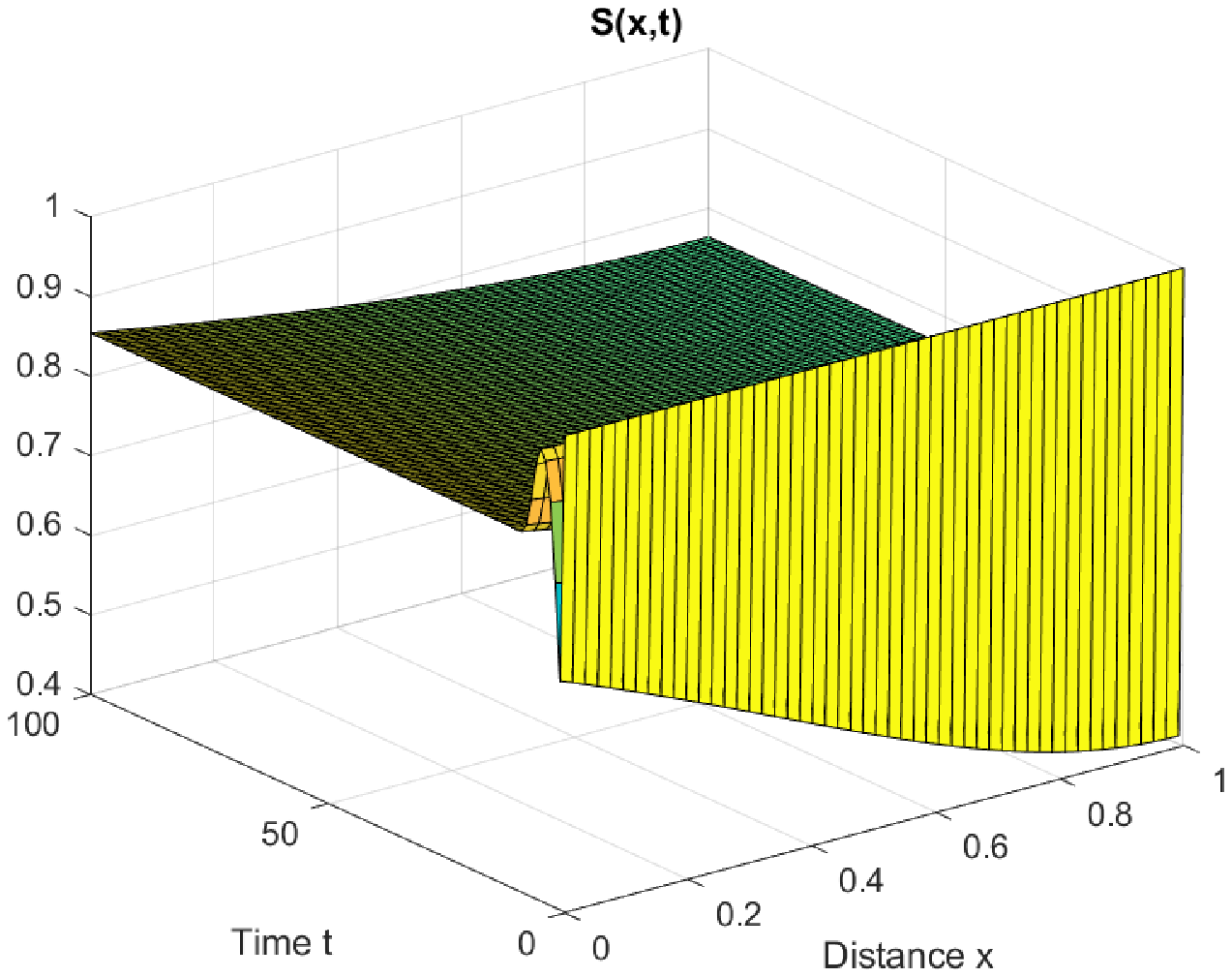}
\includegraphics[scale=0.5]{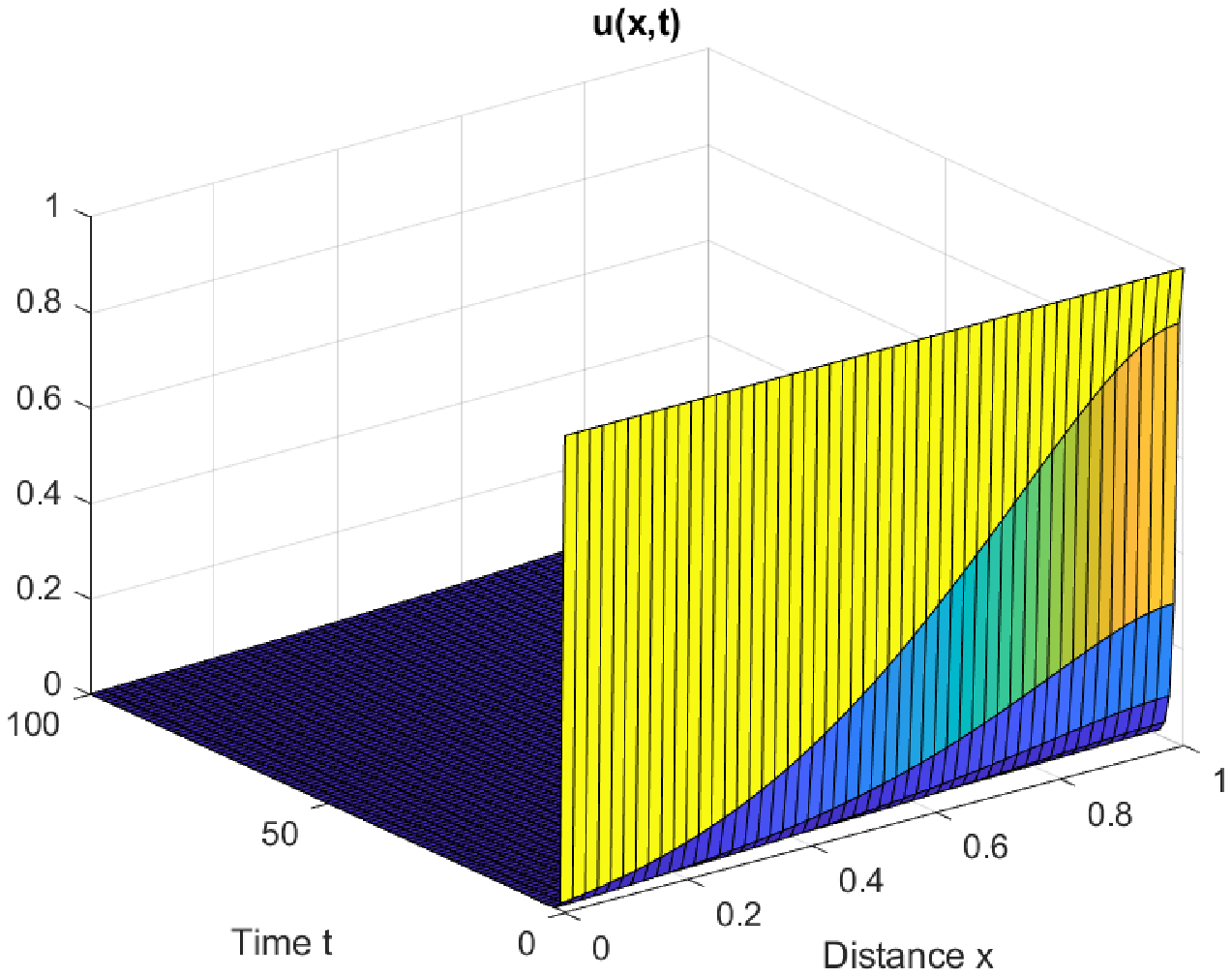}
\includegraphics[scale=0.5]{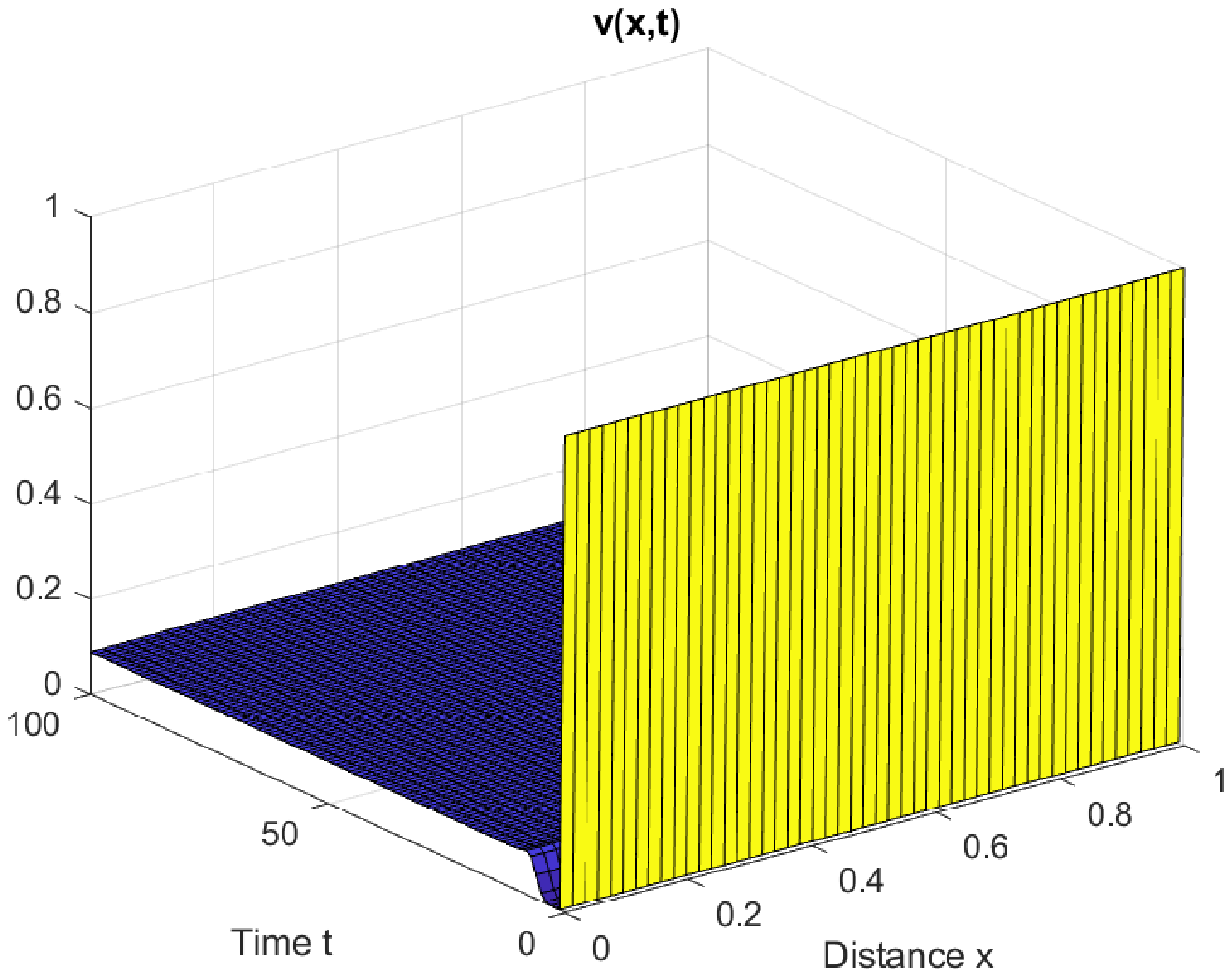}

\caption{The solution $(S, u, v)$ of system (2.1)-(2.3) as a function of two variables $x$ and $t$, in the case $(S_{0},u_{0},v_{0})=(1,0.1,0.1)$ $f(S)=\displaystyle\frac{4S}{1+S},\,\,g(S)=\displaystyle\frac{5S}{1+S},\,\alpha(u,v) =(u
+v)v,\,\,\beta(u,v) =(1+v)(u+v),$ $d_{0}=1,d_{1}=0.1,d_{2}=10$. Convergence towards the coexistence.}
\end{figure}
\begin{figure}[!hbtp]

\centering
\includegraphics[scale=0.5]{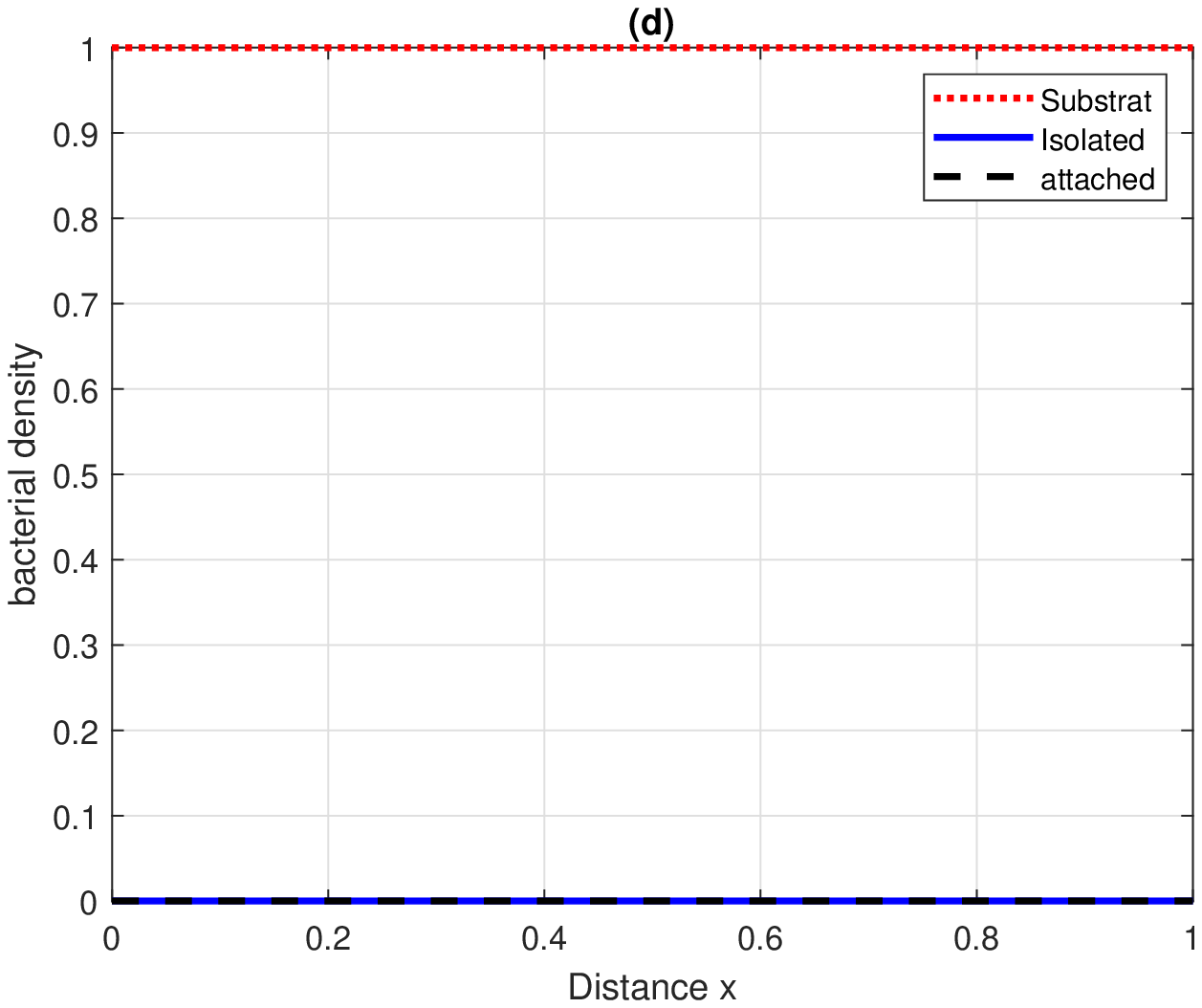}

\includegraphics[scale=0.5]{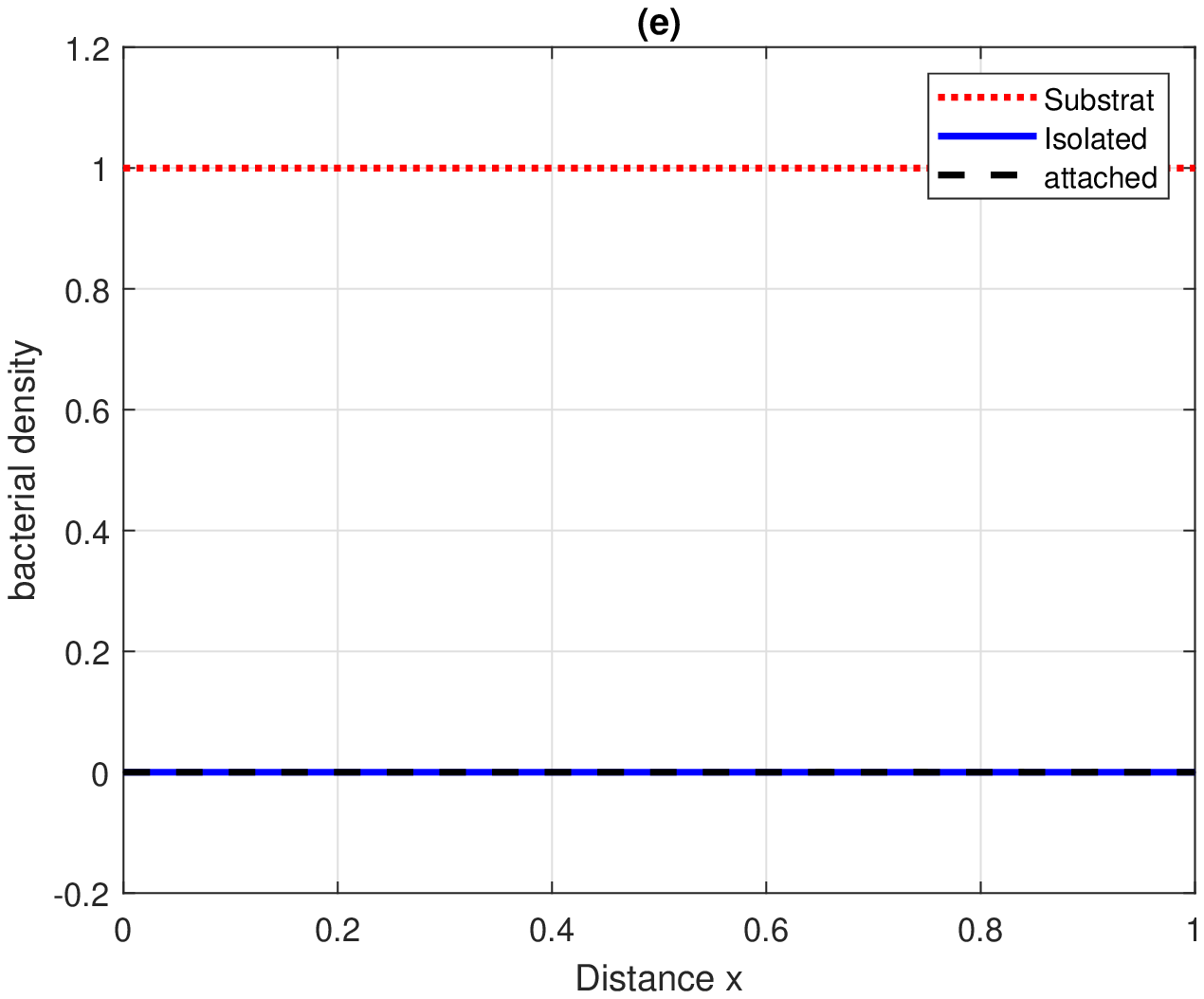}

\includegraphics[scale=0.5]{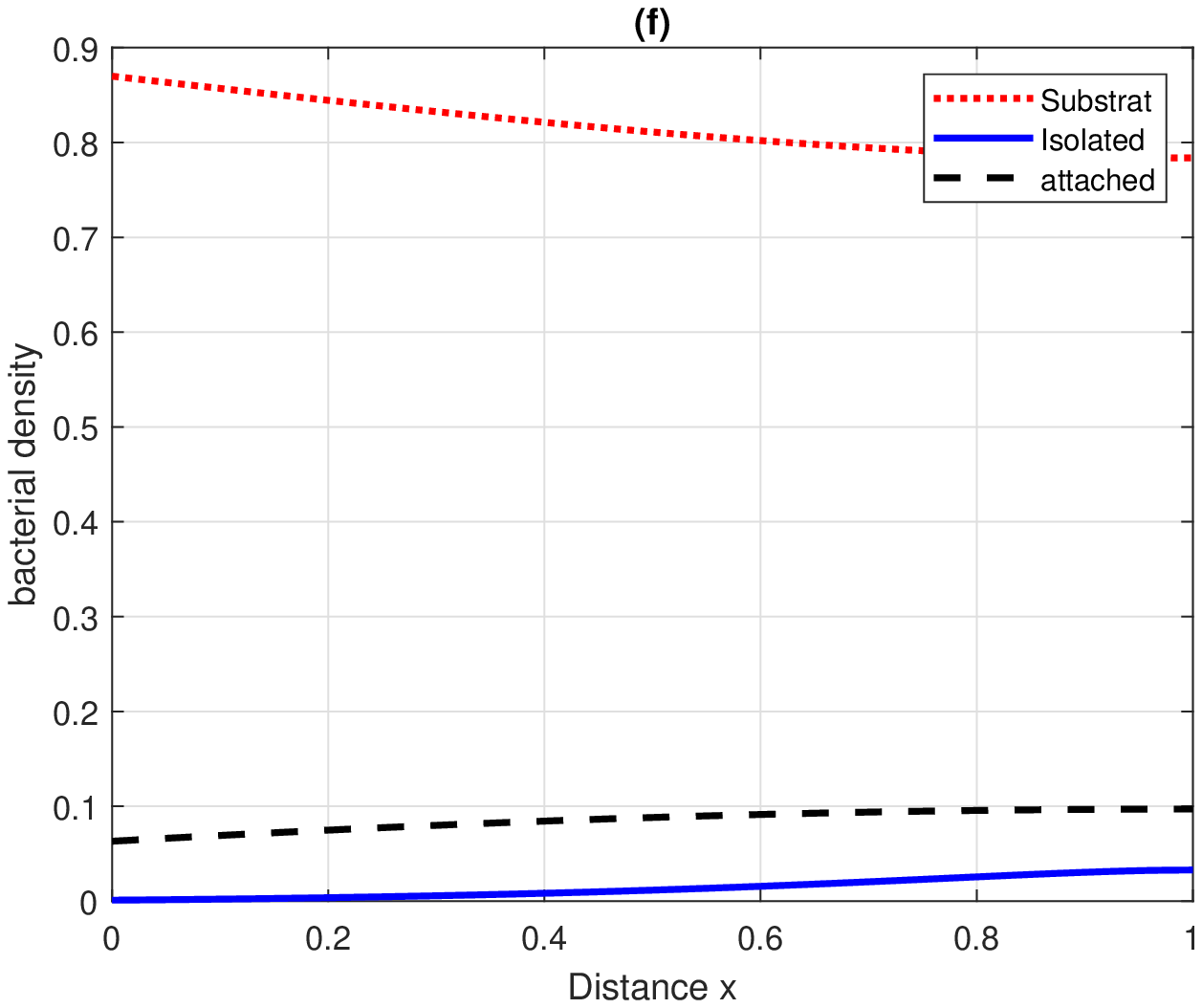}
\includegraphics[scale=0.5]{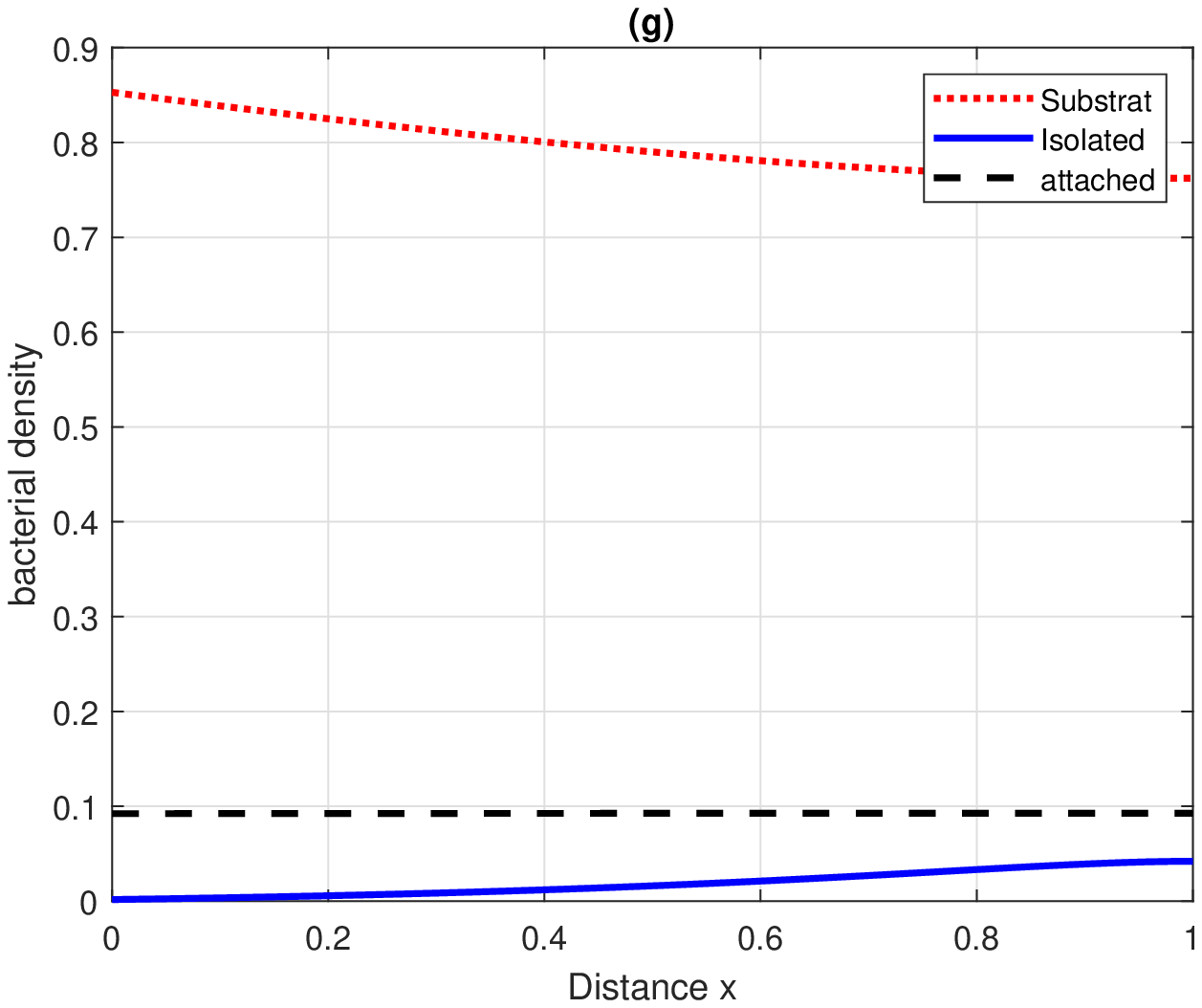}

\caption{The solution $(S, u, v)$ of system (2.1)-(2.3) as a function of two variables $x$ and $t=100$, in the case $f(S)=\displaystyle\frac{4S}{1+S},\,\,g(S)=\displaystyle\frac{5S}{1+S},$ $(d)$ $(d_{0}=1,d_{1}=0.1,d_{2}=0.1)$, $(e)$ $(d_{0}=1,d_{1}=0.1,d_{2}=0.001)$, $(f)$ $(d_{0}=1,d_{1}=0.1,d_{2}=1)$, $(g)$ $(d_{0}=1,d_{1}=0.1,d_{2}=100)$.}
\end{figure}

\begin{figure}[!hbtp]

\centering
\includegraphics[scale=0.5]{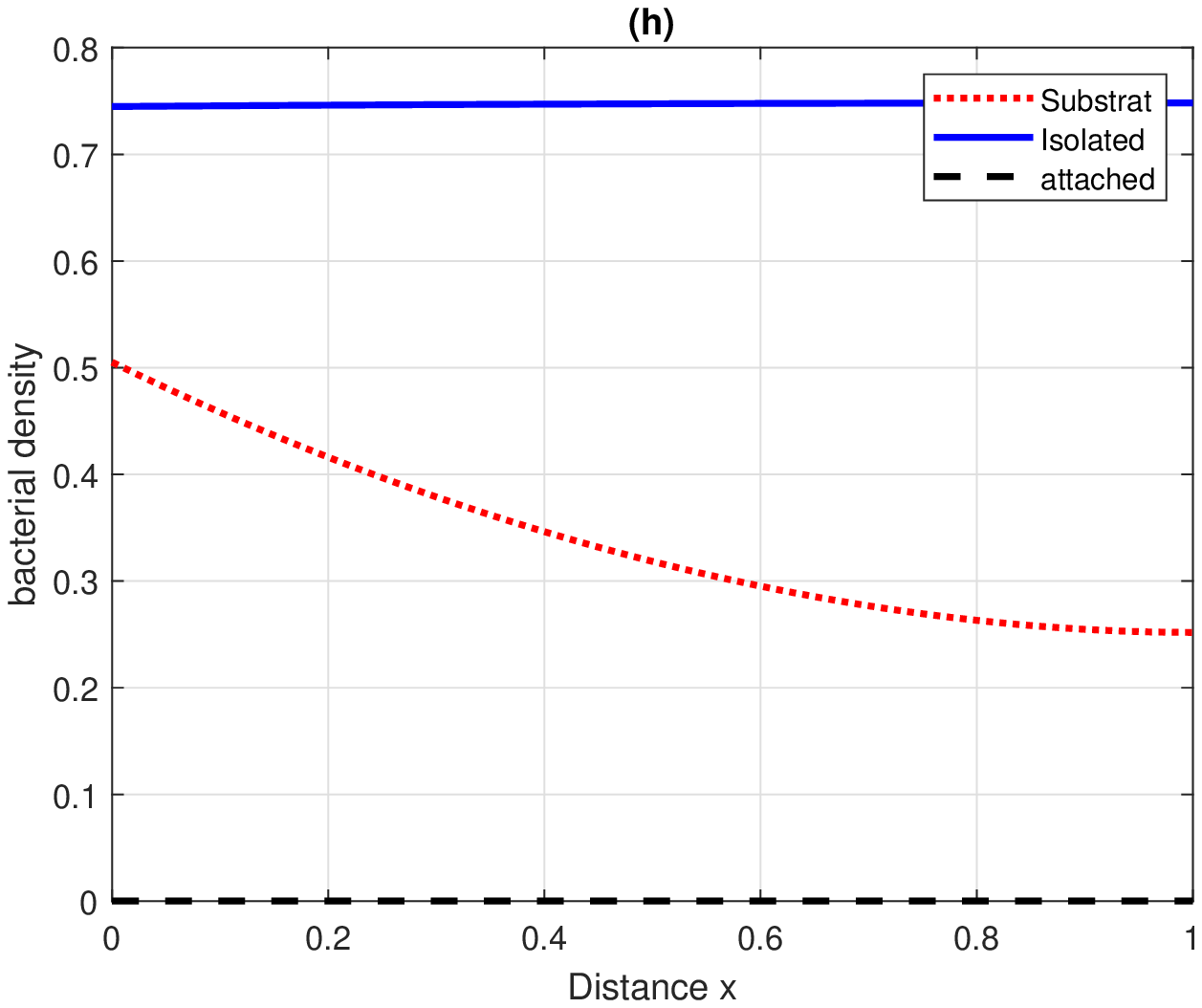}
\includegraphics[scale=0.5]{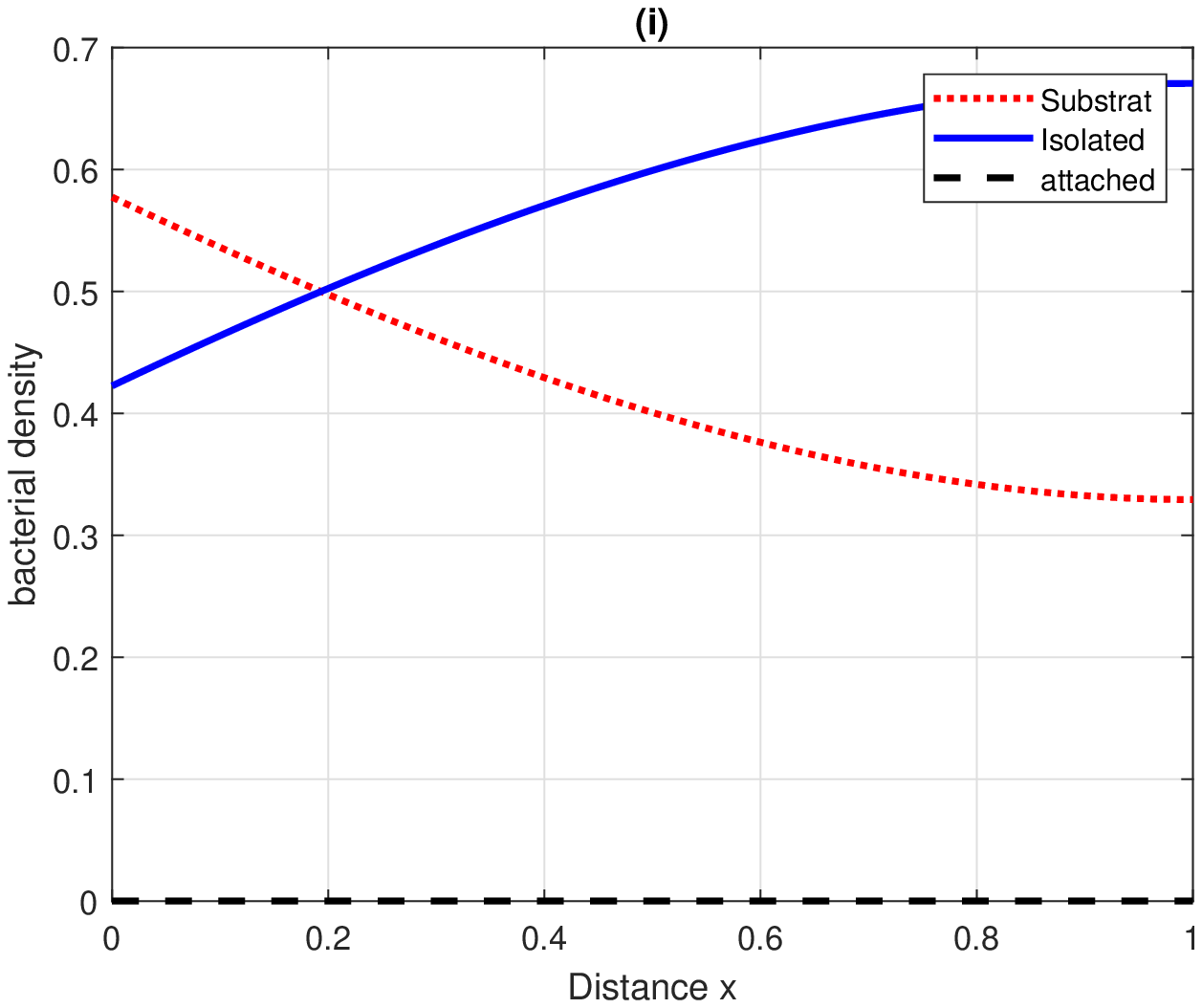}

\includegraphics[scale=0.5]{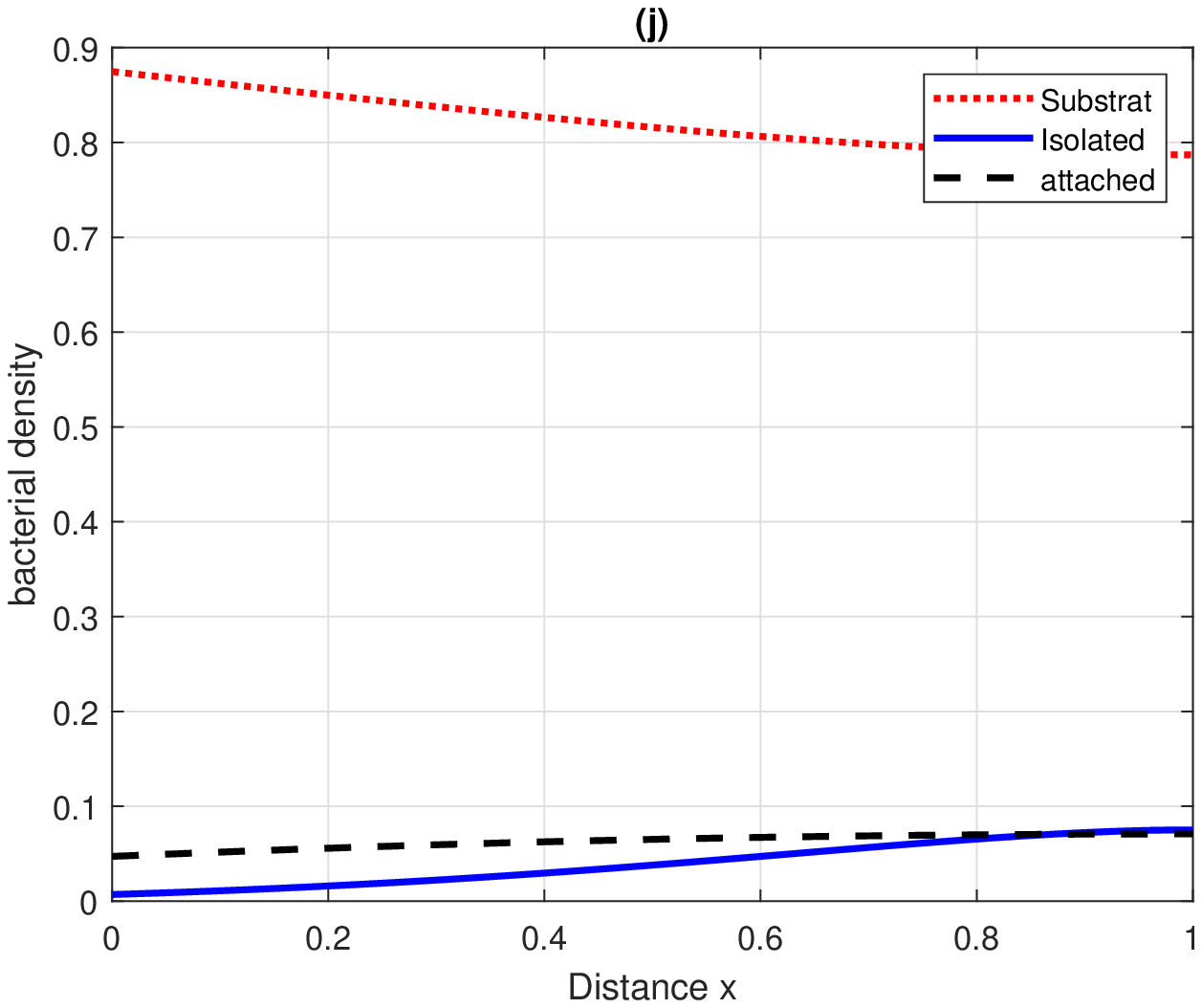}
\includegraphics[scale=0.5]{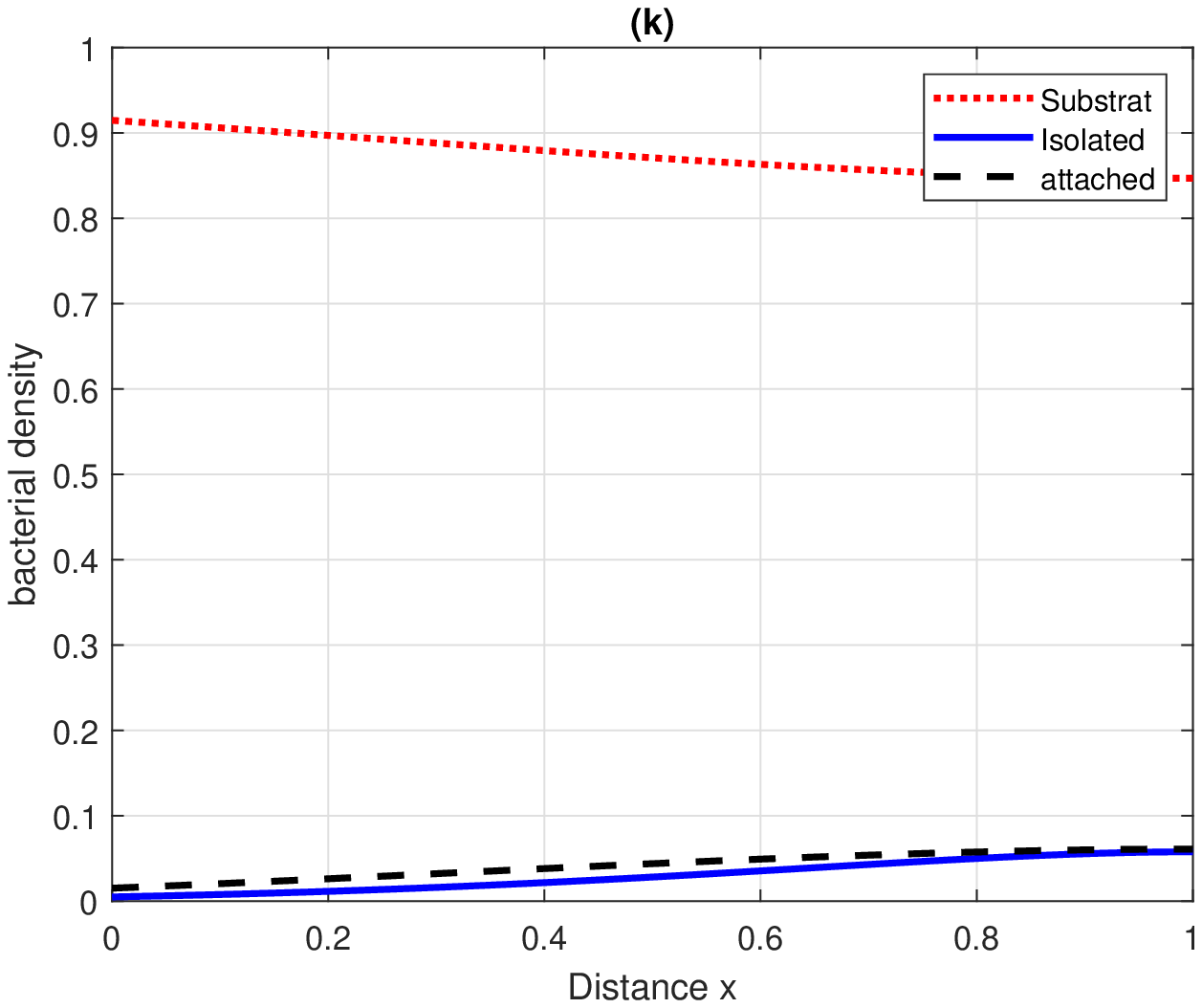}
\includegraphics[scale=0.5]{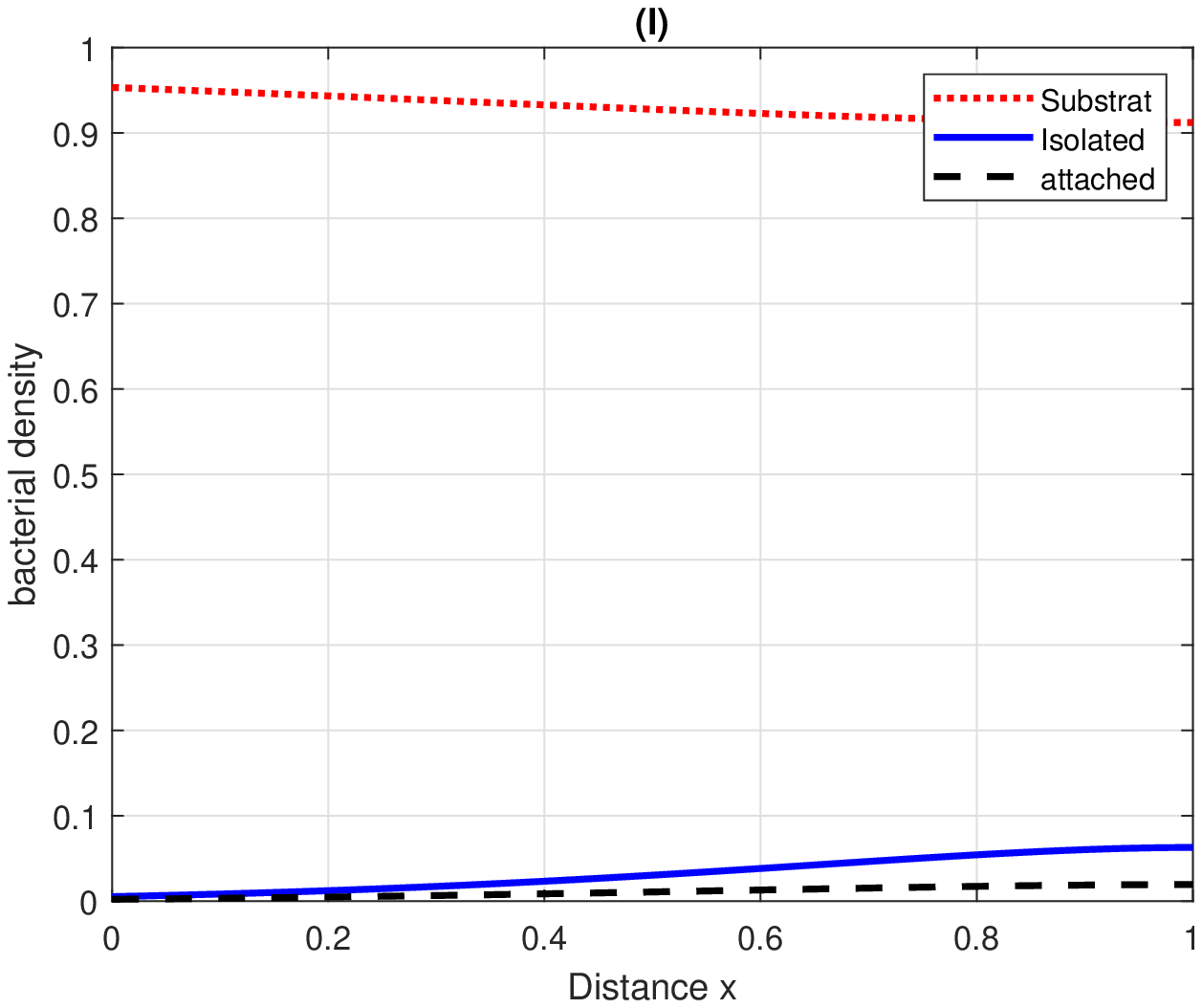}
\includegraphics[scale=0.5]{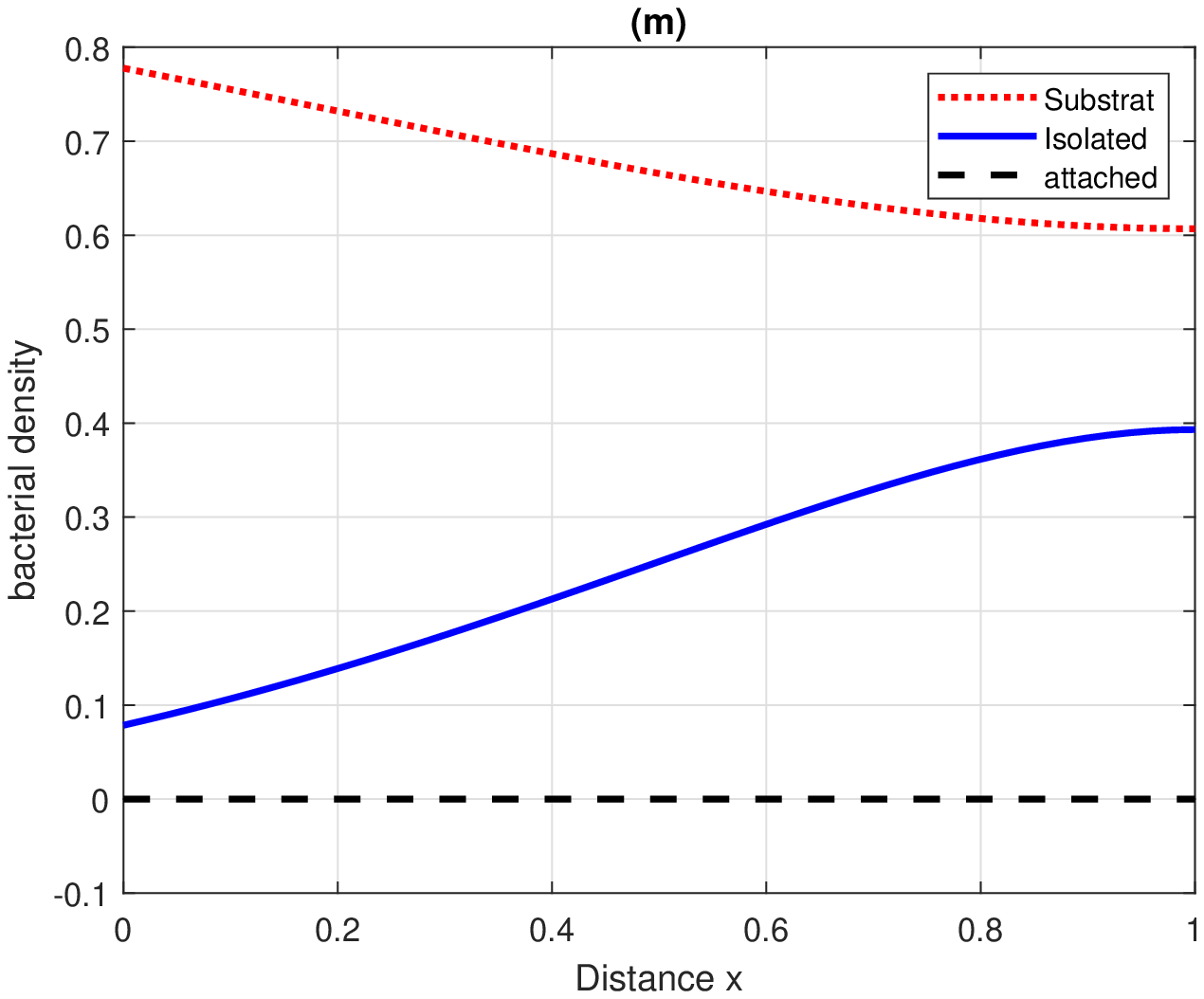}
\caption{The solution $(S, u, v)$ of system (2.1)-(2.3) as a function of two variables $x$ and $t=100$, in the case $f(S)=\displaystyle\frac{4S}{1+S},\,\,g(S)=\displaystyle\frac{5S}{1+S},$ $(h)$ $(d_{0}=1,d_{1}=100,d_{2}=0.1)$, $(i)$ $(d_{0}=1,d_{1}=1,d_{2}=0.1)$, $(j)$ $(d_{0}=1,d_{1}=0.2,d_{2}=1)$, $(k)$ $(d_{0}=1,d_{1}=0.2,d_{2}=0.3)$, $(l)$ $(d_{0}=1,d_{1}=0.2,d_{2}=0.2)$, $(m)$ $(d_{0}=1,d_{1}=0.3,d_{2}=0.2)$.}
\end{figure}
\begin{figure}[!hbtp]

\centering
\includegraphics[scale=0.5]{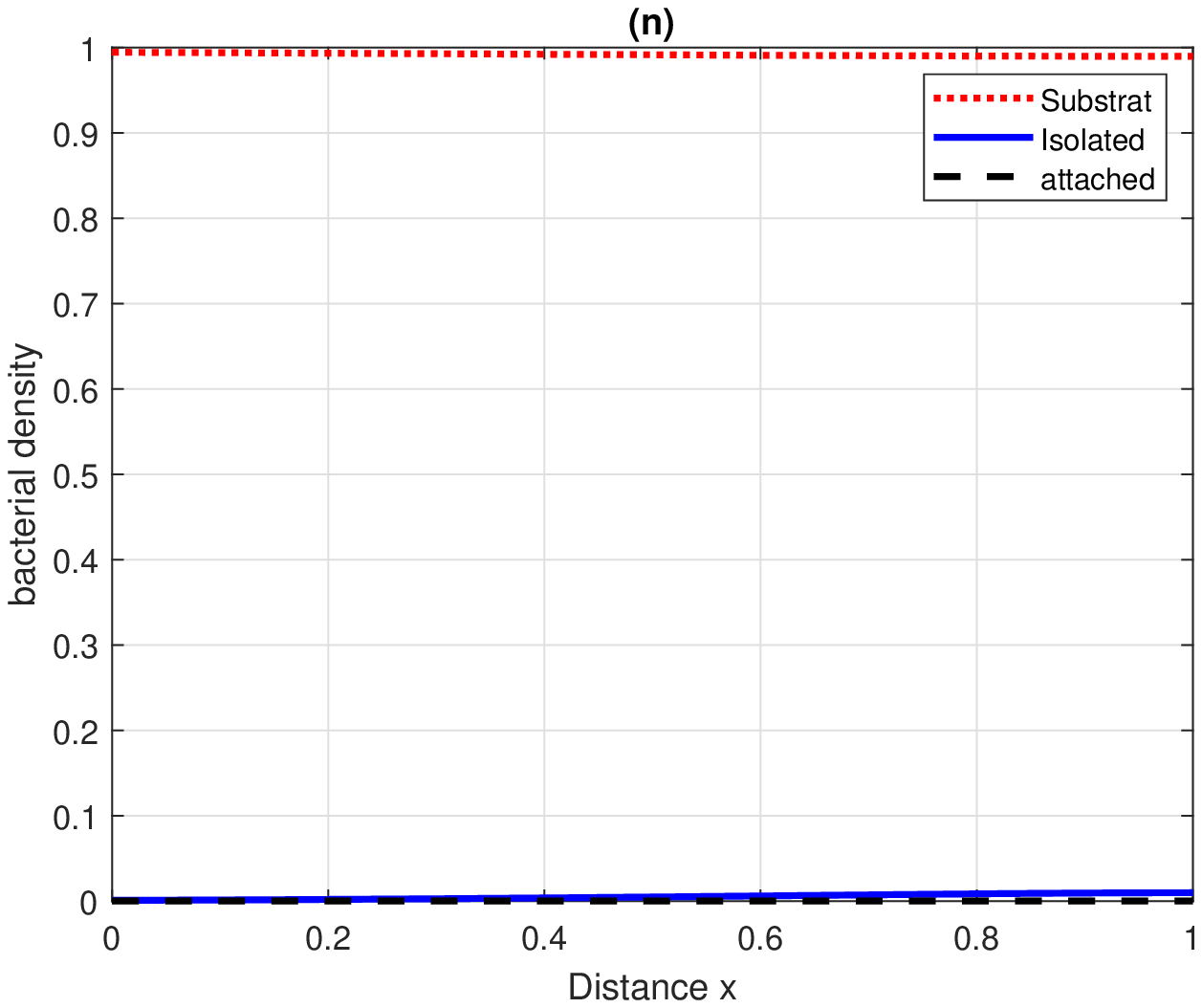}
\includegraphics[scale=0.5]{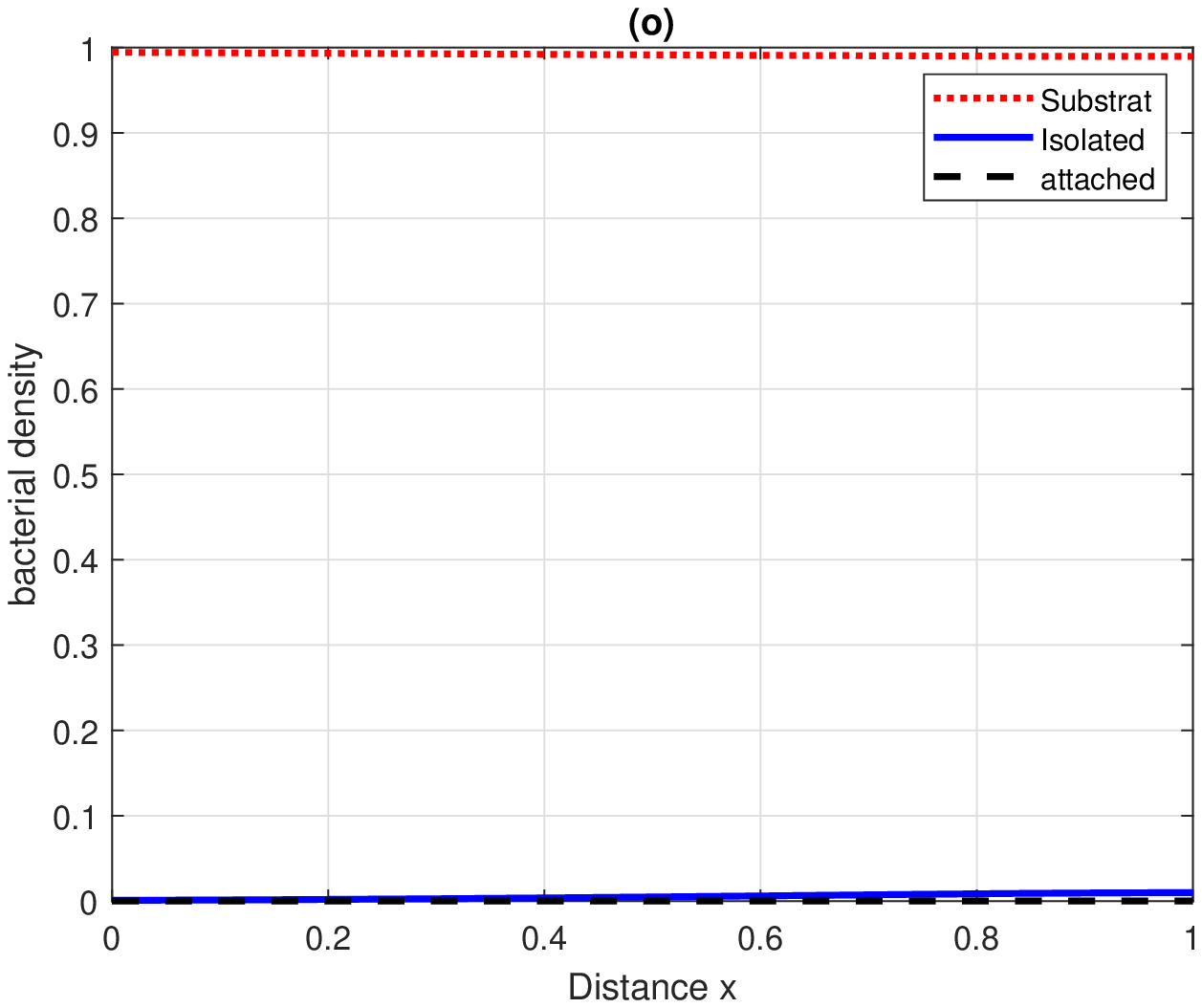}
\includegraphics[scale=0.5]{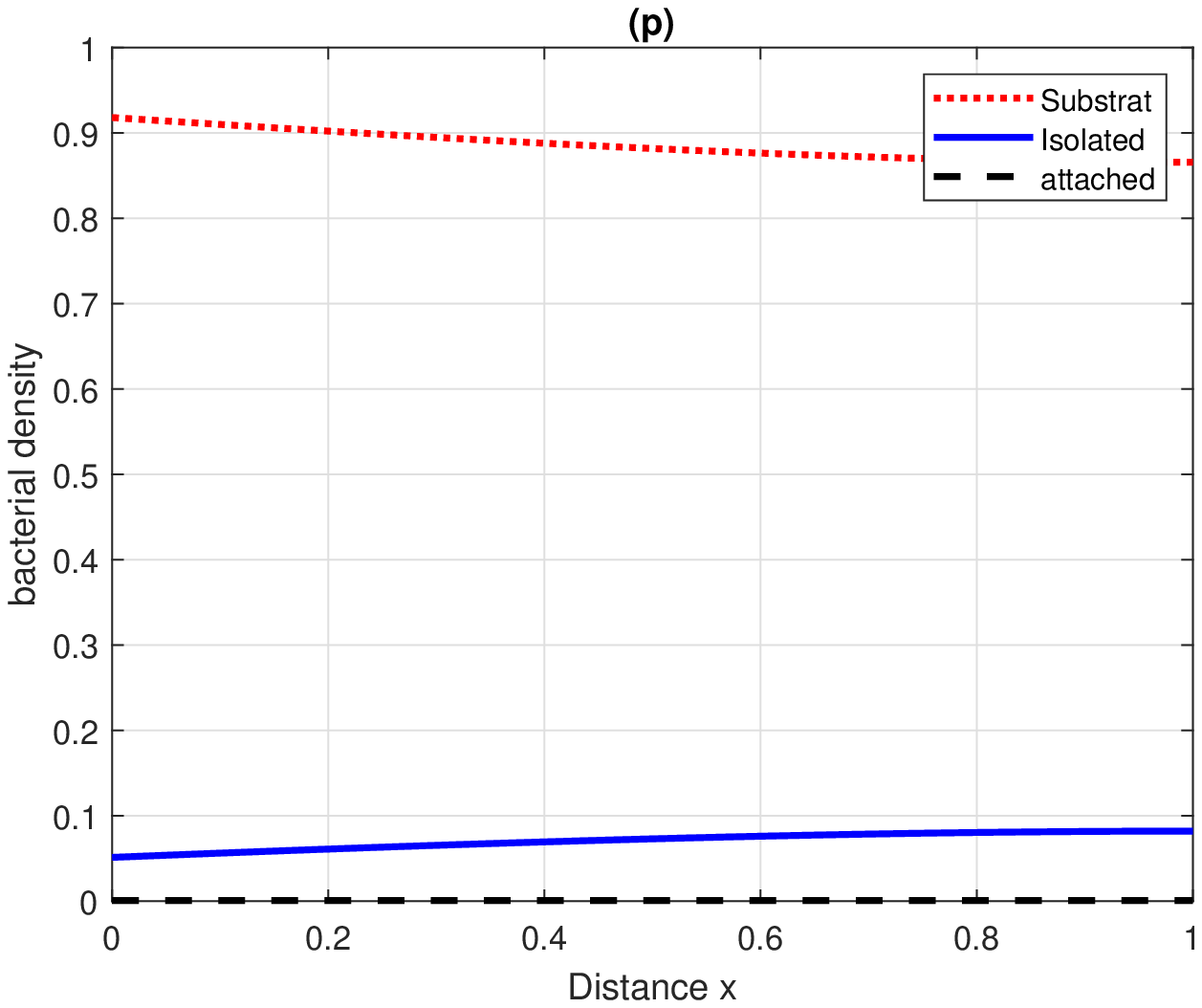}
\caption{The solution $(S, u, v)$ of system (2.1)-(2.3) as a function of two variables $x$ and $t=100$, in the case $f(S)=\displaystyle\frac{4S}{1+S},\,\,g(S)=\displaystyle\frac{5S}{1+S},\,\alpha(u,v)=u+v,\,\beta(u,v)=1$ $(n)$ $(d_{0}=1,d_{1}=0.2,d_{2}=1)$, $(o)$ $(d_{0}=1,d_{1}=0,2,d_{2}=100)$, $(p)$ $(d_{0}=1,d_{1}=1,d_{2}=100)$}
\end{figure}

\FloatBarrier

\end{document}